%% file: main_arxiv_version.tex
\documentclass[10pt,reqno]{amsart}
\usepackage{mathtools}
\usepackage{color}
\usepackage{pifont}
\usepackage{graphicx}
\usepackage{tikz}
\usepackage[hidelinks]{hyperref}
\usepackage{lineno}
\usepackage{amssymb}
\usepackage{caption}
\graphicspath{{../}{./}}
\usepackage[nocompress]{cite}
\usepackage[ruled,vlined]{algorithm2e}

\input{contents/definitions}

\usepackage{orcidlink}
\providecommand{\orcidlink}[1]{}

\author{Chenyang~An~\orcidlink{0009-0001-4892-9818}}
\address{Department of Mathematics, University of California, San Diego, CA 92093, USA}
\email{c5an@ucsd.edu}

\author{Yu~Wang~\orcidlink{0000-0003-0448-9870}}
\address{School of Mathematics, Southwest Jiaotong University, Chengdu, Sichuan 611756, China}
\email{yuwangmath@163.com}

\author{Qihao~Ye~\orcidlink{0000-0002-7369-757X}}
\address{Department of Mathematics, University of California, San Diego, CA 92093, USA}
\email{q8ye@ucsd.edu}

\author{Zhongqiang~Zhang~\orcidlink{0000-0001-8032-7510}}
\address{Department of Mathematical Sciences, Worcester Polytechnic Institute, Worcester, MA 01609, USA}
\email{zzhang7@wpi.edu}

\author{Qiao~Zhuang~\orcidlink{0000-0003-3220-203X}}
\address{School of Science and Engineering, University of Missouri-Kansas City, Kansas City, MO 64110, USA}
\email{(Corresponding author) qzhuang@umkc.edu}

\allowdisplaybreaks 
\begin{document}

\title[Carleman Estimates for Half-Line Wave with AI-Assisted weights]
{Carleman Estimates for Wave Equations on the Half-Line: AI-Assisted Weights and Applications}

\date{\today}

\begin{abstract}
\input{contents/00_abstract}

\end{abstract}
\blfootnote{The authors are listed in alphabetical order by last name.}






\subjclass[2020]{Primary 35R30; Secondary 35L05, 35A23, 65M32, 68T20}

\keywords{Carleman estimates, wave equations on the half-line, AI-assisted weight discovery, conditional stability, inverse problems, quasi-reversibility, endpoint graph stabilization}

\maketitle



\input{contents/10_introduction}

\input{contents/20_preliminaries}

\input{contents/30_carleman_estimate}

\input{contents/40_cauchy_stability}

\input{contents/50_reconstruction}

\input{contents/60_numerical_experiments}

\input{contents/70_conclusion}

\input{contents/80_acknowledgments}

\input{contents/85_data_availability}

\section*{Appendix}
\appendix

\input{contents/90_ai_workflow}

\input{contents/91_parameter_ranges}

\bibliographystyle{plain}
\bibliography{advection,wave_inverse,inverse_problem_additions}

\end{document}

%% file: contents/definitions.tex
\newtheorem{thm}{Theorem}[section]
\newtheorem{lem}{Lemma}[section]
\newtheorem{rem}{Remark}[section]

\newtheorem{cor}[thm]{Corollary}
\newtheorem{prop}[thm]{Proposition}

\numberwithin{equation}{section}

\newcommand{\norm}[1]{\left\Vert#1\right\Vert}
\newcommand{\enorm}[1]{{\left\vert\kern-0.25ex\left\vert\kern-0.25ex\left\vert #1
		\right\vert\kern-0.25ex\right\vert\kern-0.25ex\right\vert}}

\newcommand{\cT}{\mathcal{T}}

\newcommand{\dd}{\,d}

\newcommand{\ph}{\varphi}
\newcommand{\supp}{\operatorname{supp}}

\newcommand{\qz}[1]{{\color{blue}#1}}

\newcommand\blfootnote[1]{%
  \begingroup
  \renewcommand\thefootnote{}\footnote{#1}%
  \addtocounter{footnote}{-1}%
  \endgroup
}

%% file: contents/00_abstract.tex
We develop an AI-assisted search-and-certification workflow for constructing Carleman weights for wave equations on semi-infinite domains. 
Starting from a weighted cross-term identity used to derive the Carleman estimate, we derive explicit analytical screening conditions and use an AI system to propose symbolic candidate weights subject to those conditions.
The search identifies a logarithmic weight structure with concrete parameter values; the subsequent human verification by the authors then retains this structure, derives the admissible parameter range, and rigorously certifies the selected weight by verifying the required quantitative conditions. 
Based on the identified and certified Carleman weight, the subsequent proofs, analysis, and applications are carried out entirely by the authors: we establish a global Carleman estimate for the wave operator on the half-line and derive a weighted conditional lateral Cauchy stability result. The proposed weight is further used in a finite-depth one-sided reconstruction problem for semilinear wave equations on the half-line, where the initial-time flux induces an endpoint graph stabilizer and a contractive frozen nonlinear reconstruction map. Numerical experiments support the predicted contraction behavior and show improved robustness to noisy data from the Carleman weighting, with the induced endpoint graph term providing further stabilization at higher noise levels.

%% file: contents/10_introduction.tex
\section{Introduction}\label{sec:intro}

Since Carleman's original work on unique continuation for systems of partial differential equations in two independent variables \cite{Carleman1939}, weighted Carleman inequalities have become one of the main tools in the analysis of partial differential equations.
There is now a large literature on Carleman estimates for second-order operators and their applications to unique continuation, control, and inverse problems.
{Classical unique-continuation theory was further developed for operators with partially analytic coefficients by Tataru and Robbiano--Zuily \cite{Tataru1995,RobbianoZuily1998}; see also H\"ormander's foundational framework for the microlocal analysis of linear partial differential operators \cite{Hormander1985}.}
For a unified treatment of second-order elliptic, parabolic, and hyperbolic operators, see \cite{FuLuZhang2019}; inverse-problem-oriented accounts include \cite{Kli13review}. {In inverse problems, the Bukhgeim--Klibanov method established Carleman estimates as a systematic tool for proving global uniqueness and stability for broad classes of coefficient inverse problems \cite{BukhgeimKlibanov1981,Klibanov1992}.}

In the hyperbolic setting, Carleman estimates have been used to study uniqueness, stability, controllability, and reconstruction for wave equations and related systems. {Quantitative unique continuation and approximate controllability were developed in \cite{LaurentLeautaud2019}, while}
global wave-equation estimates and reconstruction algorithms appear in \cite{BelYama-B17,BaudeE13,BaudeEO21,MR3670259}.
In bounded-domain hyperbolic inverse problems, Carleman weights have been used to prove uniqueness and stability for coefficients, potentials, and sources from boundary or interior data; representative works include \cite{YuYam01,Bel04,BelYama06,KliYama06,BelYama08,MR4013301,MR3774702,MR4385429,MR4186176}.
Related one-measurement and discontinuous-coefficient settings appear in \cite{BauMO07,MR1964256,Imba2025Potential}. 
Carleman-based coefficient recovery has also been studied in unbounded waveguides \cite{CriLS16}, illustrating the additional difficulties caused by noncompact hyperbolic geometries.

\textbf{Motivation and closely related work.} These results show the strength of Carleman methods, but they also make visible a persistent bottleneck: the weight function is usually tailored by hand to a specific geometry, operator, and observation configuration.
For general operators, the admissibility of a weight is constrained by the principal symbol and by pseudoconvexity-type conditions, as discussed for instance in \cite{Lerner2019}. Accordingly, constructing effective weight functions is therefore a difficult and time-consuming part of establishing Carleman estimates.

Recent work has also begun to systematize parts of the derivation and verification of Carleman estimates.
In particular, a back-propagation method provides a systematic way to verify Carleman estimates once a weight function has been prescribed, by proving positivity of the leading energy terms in a one-dimensional spatial setting \cite{fu2025carleman}.
Such verification procedures can be viewed as downstream certification tools once candidate weights are available.
The present work instead addresses the preceding step of weight discovery: \emph{we use AI-assisted symbolic search to find candidate weight functions} satisfying the screening criteria arising from a weighted cross-term identity used to derive the Carleman estimate, followed by direct human verification of the required inequalities.
This search-and-certification viewpoint suggests a broader paradigm for exploring Carleman weights in more complex one-dimensional and higher-dimensional geometries, beyond the half-line wave problem considered in this work, provided that suitable identities and verification criteria are available.

It is important to distinguish the role of AI in the present work from that in neural-network-based inversion methods such as \cite{MR4330157,MR4106712,MR4236211,MR4128416}.
No learned model is used to conduct Carleman estimates, certify any proofs, or solve the inverse problem.
The AI system \cite{an2026qed} is used solely to propose symbolic candidate Carleman weights under pre-specified criteria. The admissible parameter ranges, coercive bounds, boundary-flux estimates, and all subsequent proofs of the Carleman estimate, stability result, and inverse-problem application are derived and verified analytically by the authors after the search.

{Turning from this general methodological perspective to its specific realization in the model problem considered here, the half-line wave equation provides a concrete setting in which the discovery framework is carried from candidate generation to rigorous certification and downstream application. For the searched weight, the required coercivity, spatial escape, and boundary-flux properties are verified and then used to establish the global half-line Carleman estimate; the resulting estimate is utilized to develop downstream stability analysis and inverse reconstruction theory.}
The main \emph{contributions and novelty} of this work are as follows:
\begin{itemize}
\item Introduction of an AI-assisted discovery approach that turns the traditionally manual search for Carleman weights into a structured and reproducible AI-assisted workflow governed by author-derived analytical screening conditions, with a documented search record showing how failed screening tests informed subsequent candidate refinements.

\item Comprehensive symbolic and analytical certification of the AI-discovered weight through direct human verification by the authors, followed by rigorous development of the subsequent proofs establishing a global Carleman estimate for the wave operator on the half-line and the resulting weighted conditional lateral Cauchy stability theorem.

\item Application of the proposed Carleman weight to a finite-depth one-sided wave reconstruction problem, where the retained nonvanishing initial-time flux induces an endpoint graph stabilizer and thereby yields a graph-stabilized Carleman contraction framework.

\item Numerical verification of the finite-depth reconstruction, confirming the theoretically established contraction mechanism and providing numerical evidence for noise robustness, high-frequency suppression, and the stabilizing effects of both the Carleman weight and endpoint regularizer through comparative numerical studies.

\end{itemize}

\textbf{Model problem.}
To realize the discovery-and-certification framework in a concrete setting, consider the wave equation on the semi-infinite domain:
\begin{subequations}\label{eq:wave_problem}
\begin{align}
&\square u = u_{tt}-c^{2}u_{xx}=f(x,t),\qquad \text{in } Q,\label{wave_pde}\\
	&u_x=g(t),\qquad x=0,\label{wave:nbc}\\
&	u(x,0)=\varphi_0(x),\,u_{t}(x,0)=\varphi_1(x),\;x>0.
\end{align}
\end{subequations} 
where $c>0$ is a constant wave speed and $Q=(0,\infty)\times (0,T)$. This is the forward problem considered; its related inverse application will be discussed in Section~\ref{sec:qi_extension}.
We use the half-line Carleman estimate first to derive weighted conditional stability from lateral Cauchy data, and then to study finite-depth recovery of the initial displacement from one-sided data in a semilinear setting.

\begin{rem}
The Carleman estimate and the stability argument below also apply to wave equations with bounded lower-order perturbations.
These terms are handled by the usual large-parameter absorption and are therefore omitted from the notation.
\end{rem}

\textbf{Paper organization.}
Section \ref{sec:prelim} introduces the weights, conjugated variables, and operators.
Section \ref{sec:CE} proves the global Carleman estimate for the wave operator on the half-line.
Section \ref{sec:cauchy_stability} applies the estimate to weighted conditional lateral Cauchy stability.
Section~\ref{sec:qi_extension} investigates a finite-depth one-sided reconstruction problem for semilinear wave equations on the half-line and develops a graph-stabilized quasi-reversibility framework.
Section~\ref{sec:numerics_qi} provides a numerical verification of that reconstruction framework.
The appendices document the AI-assisted discovery workflow and the admissible ranges of parameters.

%% file: contents/20_preliminaries.tex
\section{Preliminaries}\label{sec:prelim}
\subsection{Weight functions}\hfill

We assume that all functions in the analytical estimates are real-valued.
For \(R_0>0\), set
\begin{equation}\label{def:QR}
    Q_{R_0}:=(0,R_0)\times(0,T),
    \qquad
    Q=(0,\infty)\times(0,T).
\end{equation}
We use the logarithmic weight identified through the AI-assisted search,
\begin{equation}\label{psi_can1}
    \psi(x,t)
    =
    -\log(1+ct)-\log(1+x+ct),
\end{equation}
and the Carleman weight
\begin{equation}\label{def:psi_phi}
    \varphi=e^{\lambda\psi}.
\end{equation}
The parameter regime used in the main estimate is
\begin{equation}\label{LMMresult_paracst}
    \alpha=1,\qquad
    s<0,\qquad
    -\frac13<\lambda<0,\qquad
    0<\beta
    <1-\frac{4\lambda+2}{\sqrt{(2\lambda+3)(8\lambda+3)}}.
\end{equation}
The admissible range \eqref{LMMresult_paracst} is verified in Appendix \ref{secapd:para_ranges}.
The choice \(\alpha=1\) is made to keep the main estimate transparent; the parameter analysis in Appendix \ref{secapd:para_ranges} records a broader admissible region for \((\lambda,\alpha)\).
For later computations we denote
\begin{equation}\label{notation:ABR}
    A=1+ct,\qquad
    B=1+x+ct,\qquad
    R=\frac BA\geq1.
\end{equation}
Since \(\lambda<0\), the weight satisfies
\[
    \varphi=A^{-\lambda}B^{-\lambda}\geq1,
    \qquad
    \varphi(x,t)\to+\infty
    \quad\text{as }x\to+\infty.
\]
Thus \(e^{s\varphi}\to0\) at spatial infinity when \(s<0\).

For a function \(v\) on \(Q_{R_0}\) or on \(Q\), define the conjugated unknown
\begin{equation}\label{wv_cutoff}
    w=e^{s\varphi}v.
\end{equation}
Equivalently,
\[
    v=e^{-s\varphi}w.
\]
The global Carleman estimate is first proved on \(Q_{R_0}\) for smooth \(v\) satisfying the time-endpoint vanishing condition
\begin{equation}\label{eq:time_vanish}
    v=0
    \quad\text{in a neighborhood of }t=0\text{ and }t=T.
\end{equation}
Consequently \(w,w_t,w_x\) also vanish near \(t=0,T\).

\subsection{Principal operator and conjugated decomposition}\label{sec:related_operator}\hfill

We introduce the following wave operator and the associated form: 
\begin{align*}
\square u = u_{tt}-c^{2}u_{xx}, \; {\widetilde{\square} u= \left|\partial_t u\right|^2 -c^2\left|\partial_x u\right|^2.}
\end{align*}
We also introduce the following operators $L, L_1$ and $L_2$ on $\psi$:
\begin{equation*}
    L \psi
    =
    \lambda^{2} \varphi (\widetilde{\square} \psi)
    -
    (\alpha - 1) \lambda \varphi (\square \psi).
\end{equation*}
\begin{equation*}
    \begin{aligned}
        L_{1} \psi
        &=
        c^{2} \partial_{x}^{2} \varphi
        +
        \partial_{t}^{2} \varphi\\
        &=
        \lambda^{2} \varphi
        \left (
            (\partial_{t} \psi)^{2}
            +
            c^{2} (\partial_{x} \psi)^{2}
        \right )
        +
        \lambda \varphi
        \left (
            \partial_{t}^{2} \psi
            +
            c^{2} \partial_{x}^{2} \psi
        \right ).
    \end{aligned}
\end{equation*}

\begin{equation}\label{eq:L2_def}
    \begin{aligned}
        L_{2} \psi
        &=
        \frac{1}{2}
        \Big (
            (\alpha - 1) s \lambda \square (\varphi (\square \psi))
            -
            s \lambda^{2} \square (\varphi (\widetilde{\square} \psi))
        \Big )\\
        &+
        \Big (
            s^{3} \lambda^{3} (\alpha - 1) \varphi^{3} (\widetilde{\square} \psi) (\square \psi)
            -
            s^{3} \lambda^{4} \varphi^{3} (\widetilde{\square} \psi)^{2}\\
            &\qquad+
            s^{3} \lambda^{3}
            \left (
                \partial_{t} [ \varphi^{3} (\widetilde{\square} \psi) (\partial_{t} \psi) ]
                -
                c^{2} \partial_{x} [ \varphi^{3} (\widetilde{\square} \psi) (\partial_{x} \psi) ]
            \right )
        \Big ).
    \end{aligned}
\end{equation}

We then introduce the operators $P$, $P_{+}$, $P_{-}$, and $R$ in preparation for the Carleman estimates:
\begin{equation*}
    Pw=P_{+}w+P_{-}w+Rw,
\end{equation*}
where
\begin{equation}\label{def:oprP}
Pw  = e^{s\varphi}\square (e^{-s\varphi}w)=e^{s\varphi}\square v,
\end{equation}

\begin{align}
P_{+} w &= \partial_t^2 w - c^2 \partial_x^2 w + s^2 \lambda^2 \varphi^2 \widetilde{\square}\psi\, w, \notag \\
P_{-} w &= (\alpha-1)s\lambda \varphi w \square \psi - s\lambda^2 \varphi \widetilde{\square}\psi\, w 
- 2s\lambda \varphi (\partial_t \psi \partial_t w - c^2 \partial_x \psi \partial_x w), \notag \\
R w &= -\alpha s\lambda \varphi w \square \psi.\label{def:oprR}
\end{align}

\subsection{On the cross terms in the Carleman estimate}\label{sec:cross_term_Carleman}\hfill
\subsubsection{Main expressions related to the cross terms}\hfill

We first work on the finite cylinder \(Q_{R_0}\), because the global half-line estimate is obtained by passing to the limit \(R_0\to\infty\).
Following the framework of proving Theorem 2.1 in \cite{BaudeE13}, consider the cross terms induced by the inner product of \(P_+w\) and \(P_-w\):
\begin{equation}\label{eq:inner_prod_cross}
\left\langle P_{+}w,\, P_{-}w \right\rangle_{L^{2}(Q_{R_0})}
=\sum_{i,j=1}^{3} J_{i,j}.
\end{equation}
The detailed integration by parts is recorded in Section \ref{sub2sec:tech_details_cross}.
It gives
\begin{align}\label{eq:sum_J_complex}
\sum_{i,j=1}^3J_{i,j}
&= \int_{Q_{R_0}}\left(\partial_t w\right)^2  \big(s\lambda^2 \varphi \widetilde{\square}\psi    - (\alpha-1)s\lambda \varphi\square \psi +  s\partial_t^2\varphi+sc^2\partial_x^2\varphi\big)\notag\\
&\quad -c^2\int_{Q_{R_0}}\left(\partial_x w\right)^2  \big(s\lambda^2 \varphi\widetilde{\square}\psi    - (\alpha-1)s\lambda \varphi\square \psi- sc^2\partial_x^2\varphi -s\partial_t^2\varphi\big)\notag\\
&\quad -4sc^2 \int_{Q_{R_0}}\left(\partial_t w\right)  \partial_{x,t}^2\varphi \partial_x w \notag \\
&\quad +\frac{1}{2} \int_{Q_{R_0}} w^2
\Big((\alpha-1)s\lambda \square (\varphi \square \psi ) -s\lambda^2 \square ( \varphi \widetilde{\square} \psi )\Big)\notag\\
&\quad + \int_{Q_{R_0}} w^2  \Big( s^3\lambda^3 (\alpha-1)\varphi^3\widetilde{ \square} \psi \square \psi -s^3\lambda^4  \varphi^3  (   \widetilde{\square}\psi)^2\notag\\
&\qquad\qquad\qquad\quad + s^3\lambda^3\big(\partial_t(\varphi^3 \widetilde{\square}\psi   \partial_t\psi ){-}c^2\partial_x(\varphi^3 \widetilde{\square}\psi   \partial_x\psi \big)\Big) \notag \\
&\quad +F(R_0) - F(0) + G(T) -G(0).
\end{align}
Here \(F(x)\) and \(G(t)\) are defined in \eqref{eq:FinsumJ} and \eqref{eq:GinsumJ}.
Throughout this section, \(dx\,dt\) is omitted in integrals over \(Q_{R_0}\).
Because of the time-endpoint vanishing condition \eqref{eq:time_vanish}, we have \(G(T)=G(0)=0\).
Using the operators \(L,L_1,L_2\) defined in Section \ref{sec:related_operator}, the same identity can be written as
\begin{align}\label{eq:sum_J}
\sum_{i,j=1}^3J_{i,j}
&=\int_{Q_{R_0}} s(L\psi+L_1\psi) \vert \partial_t w\vert^2
  +\int_{Q_{R_0}} -sc^2(L\psi-L_1\psi) \vert \partial_x w\vert^2\notag\\
&\quad-\int_{Q_{R_0}} 4sc^2 \partial_{x,t}^2 \varphi \partial_t w\partial_x w
  +\int_{Q_{R_0}} (L_2\psi) w^2+F(R_0)-F(0).
\end{align}
Introducing \(0<\beta<1\), we rewrite \eqref{eq:sum_J} as
\begin{align}\label{eq:sum_Jbeta}
\sum_{i,j=1}^3 J_{i,j}
&= \int_{Q_{R_0}} s\beta (L\psi + L_1\psi)\, |\partial_t w|^2
   + \int_{Q_{R_0}} -s\beta c^2 (L\psi - L_1\psi)\, |\partial_x w|^2 \notag \\
&\quad + \int_{Q_{R_0}} s(1-\beta)(L\psi + L_1\psi)\, |\partial_t w|^2
   + \int_{Q_{R_0}} -s(1-\beta)c^2 (L\psi - L_1\psi)\, |\partial_x w|^2 \notag \\
&\quad - \int_{Q_{R_0}} 4s c^2 \partial_{x,t}^2 \varphi\, \partial_t w\, \partial_x w
   + \int_{Q_{R_0}} (L_2\psi)\, w^2 + F(R_0)-F(0).
\end{align}

\subsubsection{Technical details related to the cross terms}\hfill\label{sub2sec:tech_details_cross}

In this subsection only, we write \(\ell=R_0\) and \(Q=(0,\ell)\times(0,T)\) to keep the original integration-by-parts formulas compact.
This should not be confused with \(Q\) in \eqref{def:QR}, which corresponds to the case \(\ell\to\infty\). We will explicitly emphasize whenever this limiting case is used.

\begin{eqnarray*}
& &\frac{1}{s\lambda}J_{1,1}= (\alpha-1)\int_Q \partial_t^2 w    w \varphi \square \psi  \\
&=&  (\alpha-1) \left[ \left.   \int_0^{\ell}\left[\left(\partial_t w\right) w \varphi \square \psi \right]\right|_{t=0}^{T} d x
-\frac{1}{2}\int_{0}^\ell w^2 
\partial_t(\varphi \square \psi )  \mid_{t=0}^T \,dx 
- \int_{Q}\left(\partial_t w\right)^2  \varphi\square\psi    +\frac{1}{2} \int_{Q} w^2  \partial_t^2 (\varphi \square \psi ). \right]
\end{eqnarray*}

\begin{eqnarray*}
	& &-\frac{1}{  s  \lambda^2 }J_{1,2}= \int_Q \partial_t^2 w    w \varphi \widetilde{\square} \psi \\
	&=&   \left.   \int_0^{\ell}\left[\left(\partial_t w\right) w  \varphi \widetilde{\square} \psi  \right]\right|_{t=0}^{T} d x
	-\frac{1}{2}\int_{0}^\ell w^2 
	\partial_t( \varphi \widetilde{\square} \psi)  \mid_{t=0}^T \,dx 
	- \int_{Q}\left(\partial_t w\right)^2   \varphi \widetilde{\square} \psi    +\frac{1}{2} \int_{Q} w^2  \partial_t^2 ( \varphi \widetilde{\square} \psi ). 
\end{eqnarray*}

\begin{eqnarray*}
\frac{1}{-2s\lambda }J_{1,3} &=&   \int_Q  \partial_t^2 w \varphi(\partial_t\psi \partial_t w-c^2\partial_x\psi \partial_x w ), \\
 &=& \frac{1}{2}\int_0^\ell  (\partial_t w)^2 \varphi \partial_t \psi \mid_{t=0}^T\,dx -\frac{1}{2} \int_Q  (\partial_t w)^2
 \partial_t \big( \varphi \partial_t \psi\big)
  -c^2 \int_Q  \partial_t^2 w \varphi \partial_x\psi \partial_x w  \\
  %
  %
	\frac{1}{-2s}J_{1,3} 
	&=& \frac{\lambda}{2}\int_0^\ell  (\partial_t w)^2 \varphi \partial_t \psi \mid_{t=0}^T\,dx -\frac{1}{2} \int_Q  (\partial_t w)^2
	(	\partial_t^2\varphi +c^2 \partial_x^2 \varphi ) +c^2
	\int_{Q}\left(\partial_t w\right)    \partial_{x,t}^2 \varphi   \partial_x w\\
	&& -c^2\int_0^\ell \partial_t  w \partial_x \varphi  \partial_x w \mid_{t=0}^T\,dx+\frac{c^2}{2}\left. \int_{0}^T\left[\left(\partial_x \varphi\right)\left|\partial_t w\right|^2\right]\right|_{x=0}^{\ell} d t.\\
\frac{1}{  2sc^2 \lambda} J_{2,3} &= &   \int_Q  \partial_x^2 w  \varphi(\partial_t\psi \partial_t w-c^2\partial_x\psi \partial_x w )\\
&=&-\frac{c^2}{2}\int_0^T (\partial_x w)^2 \varphi \partial_{x}\psi \mid_{x=0}^\ell\,dt +\frac{c^2}{2} \int_Q  (\partial_x w)^2
\partial_x \big( \varphi \partial_x \psi\big)+
\int_Q  \partial_x^2 w  \varphi \partial_t\psi \partial_t w\\
%
%
\frac{1}{  2sc^2 } J_{2,3} &= &-\frac{\lambda c^2}{2}\int_0^T (\partial_x w)^2 \varphi \partial_{x} \psi \mid_{x=0}^\ell\,dt +\frac{1}{2} \int_Q  (\partial_x w)^2
(c^2\partial_x^2  \varphi +\partial_t^2\varphi) -
\int_{Q}\left(\partial_t w\right)    \partial_{x,t}^2 \varphi  \partial_x w\\   &&+\int_{0}^T \partial_x  w \partial_{t} \varphi  \partial_t w \mid_{x=0}^\ell\,dt- \frac{1}{2}\left. \int_{0}^\ell\left[\left(\partial_t \varphi\right)\left|\partial_x w\right|^2\right]\right|_{t=0}^{T} dx\\ 
\frac{-1}{2   s^3\lambda^3}J_{3,3} &=&   \int_Q \varphi^3 \widetilde{\square}\psi  w (\partial_t\psi \partial_t w-c^2\partial_x\psi \partial_x w )\\
&=& \frac{1}{2}\int_0^\ell  w^2 \varphi^3 \widetilde{\square}\psi   \partial_t\psi \mid_{t=0}^T\,dx
-\frac{c^2}{2}\int_0^T  w^2 \varphi^3 \widetilde{\square}\psi   \partial_x\psi \mid_{x=0}^\ell\,dt
  \\
  &&-
\frac{1}{2}\int_Q  w^2 \Big(\partial_t(\varphi^3 \widetilde{\square}\psi   \partial_t\psi ){ -}c^2\partial_x(\varphi^3 \widetilde{\square}\psi   \partial_x\psi )
 \Big) .
\end{eqnarray*}

\begin{align}\label{eq:FinsumJ}
    F(x)
    &=
    -c^2(\alpha-1)s\lambda
    \int_0^T\left[\left(\partial_x w\right) w \varphi {\square} \psi \right] d t\notag\\
    &\quad
    +\frac{c^2 (\alpha-1)s\lambda }{2}
    \int_{0}^{T} w^2 \partial_x(\varphi \square \psi )\,dt\notag\\
    &\quad
    +c^2s\lambda^2
    \int_0^T\left[\left(\partial_x w\right) w \varphi \widetilde{\square} \psi \right] d t\notag\\
    &\quad
    -\frac{c^2s\lambda^2 }{2}
    \int_{0}^{T} w^2 \partial_x(\varphi \widetilde{\square} \psi )\,dt\notag\\
    &\quad
    -s\lambda c^4
    \int_{0}^T\left[\left(\partial_x \psi \varphi \right)\left|\partial_x w\right|^2\right] d t\notag\\
    &\quad
    +2sc^2\int_{0}^T \partial_x w \partial_t \varphi \partial_t w\,dt
    -{sc^2}\int_{0}^T\left[\left(\partial_x \varphi\right)\left|\partial_t w\right|^2\right] d t\notag\\
    &\quad
    + {s^3\lambda^3c^2}\int_0^T w^2 \varphi^3 \widetilde{\square}\psi \partial_x\psi\,dt.
\end{align}

\begin{align}\label{eq:GinsumJ}
    G(t)
    &=
    (\alpha-1)s\lambda
    \int_0^{\ell}\left[\left(\partial_t w\right) w \varphi \square \psi \right] d x\notag\\
    &\quad
    -\frac{(\alpha-1) s \lambda }{2}
    \int_{0}^\ell w^2 \partial_t(\varphi \square \psi )\,dx \notag\\
    &\quad
    -s\lambda^2
    \int_0^{\ell}\left[\left(\partial_t w\right) w \varphi \widetilde{\square} \psi \right] d x\notag\\
    &\quad
    +\frac{s\lambda^2}{2}
    \int_{0}^\ell w^2 \partial_t( \varphi \widetilde{\square} \psi)\,dx \notag\\
    &\quad
    -{s\lambda}\int_0^\ell(\partial_t w)^2 \varphi \partial_t \psi\,dx
    +2sc^2\int_0^\ell \partial_t w \partial_x \varphi \partial_x w\,dx
    -sc^2\int_{0}^\ell\left[\left(\partial_t \varphi\right)\left|\partial_x w\right|^2\right] dx\notag\\
    &\quad
    -s^3\lambda^3\int_0^\ell w^2 \varphi^3 \widetilde{\square}\psi \partial_t\psi\,dx.
\end{align}

%% file: contents/30_carleman_estimate.tex
\section{Global Carleman estimate}\label{sec:CE}
\subsection{Global Carleman constraints}\hfill\label{sec:req_weights}

The cross-term identity in Section \ref{sec:cross_term_Carleman} (specifically, \eqref{eq:sum_Jbeta}) suggests the following four screening criteria for the function \(\psi\).

First, the spatial escape criterion is
\begin{equation}\label{cond:aspt}
\boxed{
    s<0,
    \qquad
    \lim_{x\to+\infty}\lambda\psi(x,t)=+\infty.
}
\end{equation}
For \eqref{psi_can1}, this is exactly the escaping condition
\[
    \varphi(x,t)=A^{-\lambda}B^{-\lambda}\to+\infty,
    \qquad
    e^{s\varphi(x,t)}\to0
    \quad (x\to+\infty).
\]
Second, the mixed-term screening inequality is
\begin{equation}\label{cond:control_cross_beta}
\boxed{
0<\beta<1,
\qquad
(1-\beta)^2\left((L_1\psi)^2-(L\psi)^2\right)
\geq 4c^2(\partial_{x,t}^2\varphi)^2.
}
\end{equation}
Third, the first-order sign screening criterion is
\begin{equation}\label{cond:wtwx}
\boxed{
    L_1\psi<L\psi<-L_1\psi.
}
\end{equation}
Fourth, the qualitative zero-order screening test is
\begin{equation}\label{cond:w2}
\boxed{ L_2\psi > 0.
}
\end{equation}

\begin{rem} 
The four conditions above are search-stage screening criteria tied to the particular second-order conjugation and integration-by-parts template used in this paper.
They are neither necessary nor sufficient for a Carleman estimate.
\end{rem}

The quantitative zero-order verification used in the proof is a stronger estimate 
\begin{equation}\label{cond:w2_coer}
{
    Z_s:=L_2\psi-s^2\lambda^2(\varphi\square\psi)^2
    \geq C_0(-s)^3\varphi^3
    \quad\text{for }-s\text{ sufficiently large},
}
\end{equation}
compared to \eqref{cond:w2}. This is proved in Lemma \ref{lem:bulk_coercivity}.

\subsection{AI-assisted weight search}\label{subsec:search_weight_case}\hfill

The analytical work above reduces weight discovery to the screening criteria \eqref{cond:aspt}, \eqref{cond:control_cross_beta}, \eqref{cond:wtwx}, and \eqref{cond:w2}.
We supplied a cleaned version of these criteria to the AI system \cite{an2026qed} and asked it to propose explicit symbolic ansatzes.
Each proposal was evaluated by a deterministic symbolic and numerical screen, and the failed condition was returned as feedback for the next search round.
This loop eventually identified the characteristic-aligned logarithmic structure in \eqref{psi_can1} with one set of sample parameters.

The AI-assisted stage ended at this structural proposal.
The author-led stage then generalized the sample parameters, restored the full mixed condition, and derived the admissible range \eqref{LMMresult_paracst}.
Appendix \ref{apd:ai_workflow} documents the search path, while Appendix \ref{secapd:para_ranges} gives the independent analytical certification.
Accordingly, the proof below depends on explicit computations and inequalities, not on the AI output.

\begin{rem}
When the AI system implemented the search, the mixed condition \eqref{cond:control_cross_beta} was first simplified by setting \(\beta=0\), while the remaining conditions \eqref{cond:aspt}, \eqref{cond:wtwx}, and \eqref{cond:w2} were kept.
This relaxation decoupled \(\beta\) from the search process.
The search also allowed removal of the factor \(4\) on the right-hand side, as explained in Appendix~\ref{subsec:ai_math_input}.
After the candidate \eqref{psi_can1} was found, the original \(\beta\)-dependent inequality was restored and the admissible range \eqref{LMMresult_paracst} was recovered analytically.
\end{rem}

\subsection{A uniform bound for the spatial flux}\hfill

For \(\alpha=1\), after substituting \(x=r\), the spatial flux \eqref{eq:FinsumJ} becomes
\begin{equation}\label{eq:F_alpha_one}
\begin{aligned}
F(r)
&=c^2s\lambda^2\int_0^T w_x w\varphi\widetilde\square\psi\,\dd t
  -\frac{c^2s\lambda^2}{2}\int_0^T w^2\partial_x(\varphi\widetilde\square\psi)\,\dd t \\
&\quad -s\lambda c^4\int_0^T \varphi\psi_x|w_x|^2\,\dd t
  +2sc^2\int_0^T w_x\varphi_t w_t\,\dd t
  -sc^2\int_0^T \varphi_x|w_t|^2\,\dd t \\
&\quad +s^3\lambda^3c^2\int_0^T w^2\varphi^3\widetilde\square\psi\,\psi_x\,\dd t,
\end{aligned}
\end{equation}
where all functions in the integrand are evaluated at \((r,t)\).

\begin{lem}[Uniform spatial flux bound]\label{lem:F_bound}
There exists a constant \(C>0\), depending only on \(c,T,\lambda\), such that for every \(r\geq0\), every \(s\leq-1\), and every smooth \(v\),
\begin{align}\label{eq:F_bound}
|F(r)|
&\leq C\int_0^T e^{2s\varphi(r,t)}
\Bigl(
(-s)^3\varphi(r,t)^3|v(r,t)|^2 \notag\\
&\qquad\qquad\qquad
+(-s)\varphi(r,t)
\bigl(|v_t(r,t)|^2+|v_x(r,t)|^2\bigr)
\Bigr)\dd t.
\end{align}
\end{lem}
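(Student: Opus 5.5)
The plan is to bound each of the six terms in \eqref{eq:F_alpha_one} pointwise in \(t\) by the integrand on the right of \eqref{eq:F_bound}. First I record uniform bounds on the weight. With \(A=1+ct\), \(B=1+x+ct\) we have \(\psi_x=-1/B\), \(\psi_t=-c/A-c/B\), \(\psi_{xx}=1/B^2\), and all higher derivatives of \(\psi\) are rational in \(A,B\). Since \(A\ge1\), \(B\ge1\), each derivative of \(\psi\) of order up to three is bounded by a constant depending only on \(c\). Hence \(\widetilde\square\psi\), \(\partial_x\widetilde\square\psi\), \(\psi_x\) and \(\psi_t\) are uniformly bounded on \(\overline Q\). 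Because \(\varphi=e^{\lambda\psi}\), every derivative of \(\varphi\) of order one or two equals \(\varphi\) times a polynomial in \(\lambda\) and the derivatives of \(\psi\). This gives \(|\varphi_x|+|\varphi_t|+|\partial_x(\varphi\widetilde\square\psi)|\le C(c,\lambda)\varphi\), and also \(\varphi\ge1\). The time horizon \(T\) only enters through \(A\le 1+cT\), and it is not really needed for these bounds.

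Next I pass from \(w=e^{s\varphi}v\) back to \(v\). We have \(w_x=e^{s\varphi}(v_x+s\lambda\varphi\psi_x v)\) and similarly for \(w_t\). Therefore
\[
|w_x|^2+|w_t|^2\le C e^{2s\varphi}\bigl(|v_x|^2+|v_t|^2+s^2\varphi^2|v|^2\bigr),
\qquad |w|^2=e^{2s\varphi}|v|^2 .
\]
I then estimate the terms of \eqref{eq:F_alpha_one} one at a time, using \(|s|\ge1\) and \(\varphi\ge1\) throughout.
\begin{itemize}
\item The gradient terms \(s\lambda c^4\varphi\psi_x|w_x|^2\), \(2sc^2\varphi_t w_xw_t\) and \(sc^2\varphi_x|w_t|^2\) are each bounded by \(C|s|\varphi(|w_x|^2+|w_t|^2)\). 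After substitution this is at most \(Ce^{2s\varphi}\bigl(|s|\varphi(|v_x|^2+|v_t|^2)+|s|^3\varphi^3|v|^2\bigr)\).
\item The term \(c^2s\lambda^2 w_xw\varphi\widetilde\square\psi\) is handled by Young's inequality, \(|s|\varphi|w_x||w|\le |s|\varphi|w_x|^2+|s|\varphi|w|^2\), and is then absorbed in the same way.
\item The term \(w^2\partial_x(\varphi\widetilde\square\psi)\) is bounded by \(C|s|\varphi|w|^2\le C|s|^3\varphi^3 e^{2s\varphi}|v|^2\).
\item The cubic term is bounded directly by \(C|s|^3\varphi^3|w|^2\).
\end{itemize}
Integrating over \(t\in(0,T)\) then gives \eqref{eq:F_bound}.

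The only delicate point is uniformity in \(r\ge0\). The weight \(\varphi\) grows unboundedly as \(x\to\infty\), so I must never use an \(L^\infty\) bound on \(\varphi\) itself. The bounds must stay relative, in the form \(|D^k\varphi|\le C\varphi\) and \(\varphi\le\varphi^3\), and the constants must come only from the bounded derivatives of the logarithms. I expect checking \(\partial_x\widetilde\square\psi\), which involves third derivatives of \(\psi\), to be the main bookkeeping step. It is still routine, because it produces only negative powers of \(A\) and \(B\).
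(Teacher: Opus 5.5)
Your proposal is correct and follows essentially the same route as the paper. In both, you bound $\psi_t,\psi_x,\widetilde\square\psi$ uniformly and use $|\varphi_t|+|\varphi_x|+|\partial_x(\varphi\widetilde\square\psi)|\le C\varphi$ together with Cauchy's inequality on $w_xw$ and $w_xw_t$, then convert back to $v$ via $w_z=e^{s\varphi}(v_z+s\lambda\varphi\psi_z v)$ with $-s\ge1$, $\varphi\ge1$. One small correction: since $\widetilde\square\psi=\psi_t^2-c^2\psi_x^2$, the term $\partial_x\widetilde\square\psi$ involves only second derivatives of $\psi$, so the third-order bookkeeping you anticipated is unnecessary, though harmless.
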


\begin{proof}
For \eqref{psi_can1},
\[
    \psi_t=-c\left(\frac1A+\frac1B\right),
    \qquad
    \psi_x=-\frac1B.
\]
As shown in \eqref{sqpsi_tildesqpsi}, \(\widetilde\square\psi=c^2A^{-2} + 2c^2 A^{-1}B^{-1}\). Since \(A\geq1\) and \(B\geq1\), there is a positive constant \(C\), depending only on \(c\), such that
\begin{equation}\label{eq:psi_basic_bounds}
    |\psi_t|+|\psi_x|+|\widetilde\square\psi|\leq C.
\end{equation}
Moreover,
\[
    \varphi_t=\lambda\varphi\psi_t,
    \qquad
    \varphi_x=\lambda\varphi\psi_x.
\]
Differentiating \(\varphi\widetilde\square\psi\) with respect to \(x\) and using \eqref{sqpsi_tildesqpsi} gives
\begin{equation}\label{eq:phi_derivative_bounds}
    |\varphi_t|+|\varphi_x|+|\partial_x(\varphi\widetilde\square\psi)|\leq C\varphi.
\end{equation}
Substituting \eqref{eq:psi_basic_bounds} and \eqref{eq:phi_derivative_bounds} into \eqref{eq:F_alpha_one}, and using Cauchy's inequality for the product \(w_xw\) and the product \(w_xw_t\), yields
\begin{equation}\label{eq:F_w_bound}
    |F(r)|
    \leq
    C\int_0^T
    \left(
        (-s)^3\varphi^3|w|^2
        +(-s)\varphi(|w_t|^2+|w_x|^2)
    \right)(r,t)\dd t.
\end{equation}
Here we used \(-s\geq1\) and \(\varphi\geq1\) to dominate lower powers such as \((-s)\varphi|w|^2\) by \((-s)^3\varphi^3|w|^2\).

It remains to rewrite the right-hand side in terms of \(v\).
Since \(w=e^{s\varphi}v\),
\[
    |w|^2=e^{2s\varphi}|v|^2.
\]
For \(z=t,x\),
\[
    w_z=e^{s\varphi}(v_z+s\varphi_zv)
        =e^{s\varphi}(v_z+s\lambda\varphi\psi_zv).
\]
By \eqref{eq:psi_basic_bounds},
\[
    |w_z|^2
    \leq C e^{2s\varphi}
    \left(|v_z|^2+s^2\varphi^2|v|^2\right).
\]
Multiplying by \((-s)\varphi\) gives
\[
    (-s)\varphi|w_z|^2
    \leq C e^{2s\varphi}
    \left((-s)\varphi|v_z|^2+(-s)^3\varphi^3|v|^2\right).
\]
Combining these estimates with \eqref{eq:F_w_bound} gives \eqref{eq:F_bound}.
\end{proof}

\subsection{Bulk coercivity}\label{sec:bulk_coer}\hfill

\begin{lem}[Uniform bulk coercivity]\label{lem:bulk_coercivity}
Let \(\psi\) be given by \eqref{psi_can1}, let \(\varphi=e^{\lambda\psi}\), and assume \eqref{LMMresult_paracst}.
Then there exist constants \(M_1,M_2,C_0>0\) and \(S_0\geq1\), depending only on \(c,T,\lambda,\beta\), such that for all \(s\leq-S_0\),
\begin{align}
    -\beta(L\psi+L_1\psi)&\geq M_1,\label{eq:M1_bound}\\
    \beta c^2(L\psi-L_1\psi)&\geq M_2,\label{eq:M2_bound}
\end{align}
and
\begin{equation}\label{eq:Zs_lower}
    Z_s:=L_2\psi-s^2\lambda^2(\varphi\square\psi)^2
    \geq C_0(-s)^3\varphi^3
    \qquad\text{in }Q.
\end{equation}
Consequently, for \(w=e^{s\varphi}v\), there is \(c_0>0\), independent of \(R_0\) and \(s\), such that
\begin{equation}\label{eq:w_to_v_coercive}
\begin{aligned}
&(-s)M_1\int_{Q_{R_0}}|w_t|^2
+(-s)M_2\int_{Q_{R_0}}|w_x|^2
+\int_{Q_{R_0}}Z_sw^2 \\
&\qquad\geq c_0\int_{Q_{R_0}}e^{2s\varphi}
\left(
(-s)^3\varphi^3|v|^2
+(-s)(|v_t|^2+|v_x|^2)
\right)\dd x\dd t,
\end{aligned}
\end{equation}
\end{lem}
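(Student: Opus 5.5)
The plan is to compute everything explicitly in the variables \(a=1/A\), \(b=1/B\), which satisfy \(0<b\le a\le 1\) and \(a\geq (1+cT)^{-1}\) on \(Q\). From \eqref{psi_can1},
\[
\psi_t=-c(a+b),\quad \psi_x=-b,\quad \psi_{tt}=c^2(a^2+b^2),\quad \psi_{xx}=b^2,
\]
\[
\square\psi=c^2a^2,\qquad \widetilde\square\psi=c^2a^2+2c^2ab .
\]
The key structural point is that every coefficient, divided by the appropriate power of \(\varphi\), becomes a homogeneous polynomial in \((a,b)\). So each inequality reduces to definiteness of a quadratic or quartic form on the cone \(\{0<b\le a\}\). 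The lower bound \(a\ge (1+cT)^{-1}\) then makes every estimate uniform in \(x\in(0,\infty)\), and hence uniform in \(R_0\).

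\textbf{First-order coefficients.} With \(\alpha=1\) we have \(L\psi=\lambda^2\varphi\widetilde\square\psi\). Substituting gives
\[
L\psi+L_1\psi=c^2\varphi\bigl[\lambda(2\lambda+1)a^2+4\lambda^2ab+2\lambda(\lambda+1)b^2\bigr].
\]
The associated symmetric matrix has negative diagonal entries for \(-\tfrac12<\lambda<0\). Its determinant is \(2\lambda^2(3\lambda+1)\), which is positive exactly when \(\lambda>-\tfrac13\). Hence the form is negative definite, so \(L\psi+L_1\psi\le -c_1\varphi(a^2+b^2)\le -c_1(1+cT)^{-2}\), using \(\varphi\ge1\). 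This gives \eqref{eq:M1_bound}. Similarly,
\[
L\psi-L_1\psi=c^2\varphi\bigl[-\lambda a^2-2\lambda(\lambda+1)b^2\bigr]\ge -\lambda c^2\varphi a^2,
\]
which gives \eqref{eq:M2_bound}. The condition on \(\beta\) in \eqref{LMMresult_paracst} is not needed here beyond \(\beta>0\); it enters only later, through the mixed term.

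\textbf{Zero-order term \(Z_s\).} I will expand \(L_2\psi\) from \eqref{eq:L2_def} with \(\alpha=1\) and group it by powers of \(s\). For the cubic part, I use \(\partial_t\varphi^3=3\lambda\varphi^3\psi_t\) and \(\partial_x\varphi^3=3\lambda\varphi^3\psi_x\), which give
\[
s^3\lambda^3\varphi^3\Bigl[2\lambda(\widetilde\square\psi)^2+\partial_t(\widetilde\square\psi\,\psi_t)-c^2\partial_x(\widetilde\square\psi\,\psi_x)\Bigr].
\]
Since \(s^3\lambda^3>0\), I need the bracket to be \(\ge c_2\,a^4\) on the cone. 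Using \(\partial_t a=-ca^2\), \(\partial_t b=-cb^2\) and \(\partial_x b=-b^2\), the bracket becomes an explicit homogeneous quartic \(c^4\,p_\lambda(a,b)\). Its positivity for \(0\le b\le a\) and \(-\tfrac13<\lambda<0\) reduces, after setting \(\tau=b/a\in[0,1]\), to positivity of a one-variable quartic polynomial on \([0,1]\). This is the step I expect to be the main obstacle. I would first try to check the endpoints \(\tau=0,1\) and then show the polynomial has no interior zero, for example by writing it as a sum of terms of fixed sign in the admissible \(\lambda\)-range. Appendix~\ref{secapd:para_ranges} is presumably where this verification is recorded.

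The remaining pieces of \(Z_s\) are lower order. The linear term \(-\tfrac{s\lambda^2}{2}\square(\varphi\widetilde\square\psi)\) is bounded by \(C(-s)\varphi\), because all derivatives of \(\psi\) are bounded and \(\varphi\)-derivatives only produce extra powers of \(\varphi\) through \(\lambda\psi\). Those powers are at most \(\varphi\cdot\)const, since each derivative of \(\varphi\) is \(\lambda\varphi\) times bounded factors. The subtracted term \(s^2\lambda^2\varphi^2c^4a^4\) is \(O(s^2\varphi^2)\). Both are absorbed into \(\tfrac12 c_2(-s)^3\varphi^3a^4\) once \(-s\ge S_0\), using \(\varphi\ge1\). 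This yields \eqref{eq:Zs_lower}.

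\textbf{Back to \(v\).} Finally, I derive \eqref{eq:w_to_v_coercive}. From \(v_z=e^{-s\varphi}(w_z-s\lambda\varphi\psi_z w)\) and the bounds on \(\psi_z\),
\[
(-s)e^{2s\varphi}|v_z|^2\le C\bigl((-s)|w_z|^2+(-s)^3\varphi^2w^2\bigr).
\]
The first term on the right is controlled by the \(M_1\) and \(M_2\) terms, and the second by \(Z_s\ge C_0(-s)^3\varphi^3w^2\). Together with \(e^{2s\varphi}|v|^2=w^2\), this gives the claim with \(c_0\) depending only on \(M_1,M_2,C_0,C\). In particular, \(c_0\) is independent of \(R_0\) and \(s\).
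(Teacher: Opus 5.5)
Your proposal is correct and follows the same skeleton as the paper: explicit coefficients in \(A^{-1},B^{-1}\), the sign of a polynomial in the ratio \(B/A\), absorption for large \(-s\), then passage from \(w\) to \(v\). The differences are local.

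The step you flag as the main obstacle closes at once. In \(\partial_t(\widetilde\square\psi\,\psi_t)-c^2\partial_x(\widetilde\square\psi\,\psi_x)\) the \(ab^3\) contributions \(\pm 4c^4ab^3\) cancel. Your bracket then equals \(c^4a^2\bigl[(2\lambda+3)a^2+2(4\lambda+3)ab+4(2\lambda+1)b^2\bigr]\). This is a quadratic in \(\tau=b/a\) with all coefficients positive for \(-\tfrac13<\lambda<0\), so \(c_2=c^4(2\lambda+3)\); it is exactly the paper's \eqref{eq:M0_formula_revised}.

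For the first-order terms, the paper minimizes \(H_\lambda(R)\) over \(R\ge1\) and obtains \((1+3\lambda)/(1+\lambda)\). That is the same condition as your determinant \(2\lambda^2(3\lambda+1)>0\); your quadratic-form argument simply does not need \(b\le a\).

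The one genuinely different choice is the term linear in \(s\). The paper proves the explicit formula \eqref{eq:square_phi_wtbox_positive}, showing \(\square(\varphi\widetilde\square\psi)>0\), and drops that term by sign. You only use \(|\square(\varphi\widetilde\square\psi)|\le C\varphi\) and absorb it into the cubic term. Since \(-s\ge S_0\) is needed anyway to absorb \(s^2\lambda^2(\varphi\square\psi)^2\), your version costs nothing and avoids a lengthy computation. The sign is not needed for the lemma as stated.

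Finally, you bound the \(v\)-side from above via \(|p+q|^2\le 2|p|^2+2|q|^2\). The paper instead bounds the \(w\)-side from below with \(|p+q|^2\ge\theta|p|^2-\frac{\theta}{1-\theta}|q|^2\) for small \(\theta\). The two are equivalent.
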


\begin{proof}
We first prove \eqref{eq:M1_bound} and \eqref{eq:M2_bound}.
For \(\alpha=1\), the computations in Appendix \ref{subsec:apd_computprep} regarding $L\psi$ and $L_1\psi$, as well as the relation $B=RA$ give
\[
    -\beta(L\psi+L_1\psi)
    =\beta(-c^2\lambda)A^{-2}\varphi H_\lambda(R),
\]
where
\[
    H_\lambda(R)
    =(2\lambda+1)+\frac{4\lambda}{R}
      +\frac{2(\lambda+1)}{R^2}.
\]
For \(-1/3<\lambda<0\),
\[
    \inf_{R\geq1}H_\lambda(R)
    =\frac{1+3\lambda}{1+\lambda}>0.
\]
Since \(A\leq1+cT\) and \(\varphi\geq1\), we may take
\[
    M_1=\frac12\beta(-c^2\lambda)(1+cT)^{-2}
      \frac{1+3\lambda}{1+\lambda}>0.
\]
The factor \(1/2\) is included only to leave room for harmless inequalities below.
Similarly,
\[
    \beta c^2(L\psi-L_1\psi)
    =\beta(-c^4\lambda)A^{-2}\varphi
      \left(1+\frac{2\lambda+2}{R^2}\right).
\]
Because \(2\lambda+2>0\) and \(R\geq1\), the factor in parentheses is bounded below by \(1\).
Thus we may take
\[
    M_2=\frac12\beta(-c^4\lambda)(1+cT)^{-2}>0.
\]

We next prove the zero-order estimate.
For \(\alpha=1\), \eqref{eq:L2_def} becomes
\begin{equation}\label{eq:L2_alpha1_revised}
L_2\psi
= -\frac{1}{2}s\lambda^2 \square\!\bigl(\varphi(\widetilde{\square}\psi)\bigr)
- s^3\lambda^4 \varphi^3 (\widetilde{\square}\psi)^2
+ s^3\lambda^3\left(
\partial_t\!\bigl[\varphi^3 (\widetilde{\square}\psi)\psi_t\bigr]
- c^2 \partial_x\!\bigl[\varphi^3 (\widetilde{\square}\psi)\psi_x\bigr]
\right).
\end{equation}
The derivative term can be written as
\[
\partial_t\!\bigl[\varphi^3 (\widetilde{\square}\psi)\psi_t\bigr]
- c^2 \partial_x\!\bigl[\varphi^3 (\widetilde{\square}\psi)\psi_x\bigr]
=
\varphi^3 L_0\psi,
\]
where
\[
L_0\psi=
\partial_t\bigl(\widetilde{\square}\psi\,\psi_t\bigr)
+ 3\lambda \widetilde{\square}\psi\,\psi_t^2
- c^{2}\partial_x\bigl(\widetilde{\square}\psi\,\psi_x\bigr)
- 3\lambda c^{2}\widetilde{\square}\psi\,\psi_x^2.
\]
Therefore
\begin{equation}\label{eq:L2_M0_revised}
L_2\psi
=-\frac{1}{2}s\lambda^2 \square\!\bigl(\varphi(\widetilde{\square}\psi)\bigr)
+s^3\lambda^3\varphi^3M_0,
\qquad
M_0:=L_0\psi-\lambda(\widetilde{\square}\psi)^2.
\end{equation}
For the explicit weight \eqref{psi_can1}, a direct computation gives
\begin{equation}\label{eq:M0_formula_revised}
    M_0
    =
    c^4A^{-4}
    \left[
        (2\lambda+3)+\frac{2(4\lambda+3)}R
        +\frac{4(2\lambda+1)}{R^2}
    \right].
\end{equation}
Since \(-1/3<\lambda<0\), all coefficients in the bracket in \eqref{eq:M0_formula_revised} are positive.
Hence
\begin{equation}\label{eq:M0_lower_revised}
    M_0
    \geq
    c^4(1+cT)^{-4}(2\lambda+3)
    =:m_0>0.
\end{equation}
Furthermore, because \(s<0\) and \(\lambda<0\),
\[
    s^3\lambda^3=(-s)^3(-\lambda)^3>0.
\]
A direct computation for \eqref{psi_can1} gives
\begin{equation}\label{eq:square_phi_wtbox_positive}
\begin{aligned}
\square(\varphi\widetilde{\square}\psi)
&=c^{4} A^{-2\lambda-7}R^{-\lambda-3}
\Bigl(
 A^{3}(9\lambda^{2}+23\lambda+14)
 +A^{2}x(15\lambda^{2}+43\lambda+30) \\
&\qquad
+Ax^{2}(7\lambda^{2}+25\lambda+22)
+x^{3}(\lambda^{2}+5\lambda+6)
\Bigr)>0
\end{aligned}
\end{equation}
for \(-1/3<\lambda<0\).
Thus the first term on the right-hand side of \eqref{eq:L2_M0_revised} is nonnegative.
Also,
\begin{equation}\label{eq:sqpsi_bound_revised}
    (\square\psi)^2=c^4A^{-4}\leq c^4.
\end{equation}
Combining \eqref{eq:L2_M0_revised}, \eqref{eq:M0_lower_revised}, and \eqref{eq:sqpsi_bound_revised}, we obtain
\[
\begin{aligned}
Z_s
&=L_2\psi-s^2\lambda^2(\varphi\square\psi)^2 \\
&\geq
(-s)^3(-\lambda)^3m_0\varphi^3
-\frac{c^4}{9}(-s)^2\varphi^2.
\end{aligned}
\]
Since \(\varphi\geq1\), there exist constants \(S_0\geq1\) and \(C_0>0\), depending only on \(c,T,\lambda\), such that \eqref{eq:Zs_lower} holds for all \(s\leq-S_0\). This is also the coercive lower bound as desired in \eqref{cond:w2_coer}.

It remains to prove \eqref{eq:w_to_v_coercive}.
For \(z=t,x\),
\[
    w_z=e^{s\varphi}(v_z+s\lambda\varphi\psi_zv).
\]
For every \(0<\theta<1\), the elementary inequality
\[
    |a+b|^2\geq \theta |a|^2-\frac{\theta}{1-\theta}|b|^2
\]
gives
\begin{equation}\label{M1M2_lower}
    (-s)M_j|w_z|^2
    \geq
    \theta(-s)M_j e^{2s\varphi}|v_z|^2
    -\frac{\theta}{1-\theta}
    M_j(-s)^3\lambda^2\ph^2\psi_z^2
    e^{2s\ph}|v|^2 ,
\end{equation}
where \(j=1\) for \(z=t\) and \(j=2\) for \(z=x\).
Because \(\varphi\geq1\) and using \eqref{eq:psi_basic_bounds}, the negative term on the right-hand side of the above inequality is bounded by
\[
    C_\theta(-s)^3\varphi^3e^{2s\varphi}|v|^2,
\]
with \(C_\theta=C\theta(1-\theta)^{-1}\max\{M_1,M_2\}\).

Choosing \(\theta>0\) sufficiently small, the two negative contributions from \(z=t\) and \(z=x\) in \eqref{M1M2_lower} are absorbed by $C_0 e^{2s\varphi} (-s)^3\varphi^3 \vert v \vert^2/2$, respectively. Then utilizing \eqref{M1M2_lower} and \eqref{eq:Zs_lower} gives
\begin{align*}
(-s)M_1|w_t|^2+(-s)M_2|w_x|^2+Z_s w^2
&\ge
\theta(-s)M_1 e^{2s\varphi}|v_t|^2
+\theta(-s)M_2 e^{2s\varphi}|v_x|^2 \\
&\quad
+\frac{C_0}{2}e^{2s\varphi}(-s)^3\varphi^3|v|^2.
\end{align*}
Integrating over \(Q_{R_0}\) gives \eqref{eq:w_to_v_coercive}.
\end{proof}

\subsection{Global estimate and half-line passage}\hfill

\begin{thm}[Global Carleman estimate]\label{thm:global_carleman}
Let \(\psi\) be given by \eqref{psi_can1}, let \(\varphi=e^{\lambda\psi}\), and assume the parameter range \eqref{LMMresult_paracst}.
There exist constants \(C>0\) and \(S_0\geq1\), depending on \(c,T,\lambda,\beta\) but not on \(R_0\), such that for every \(R_0>0\), every smooth \(v\) on \(\overline{Q_{R_0}}\) satisfying \eqref{eq:time_vanish}, and every \(s\leq-S_0\),
\begin{align}
&\int_{Q_{R_0}}e^{2s\varphi}
\left(
(-s)^3\varphi^3|v|^2
+(-s)(|v_t|^2+|v_x|^2)
\right)\dd x\dd t \notag\\
&\quad\leq
C\int_{Q_{R_0}}e^{2s\varphi}|\square v|^2\dd x\dd t \label{eq:global_carleman}\\
&\qquad
+C\int_0^T e^{2s\varphi(0,t)}
\left(
(-s)^3\varphi(0,t)^3|v(0,t)|^2
+(-s)\varphi(0,t)
\bigl(|v_t(0,t)|^2+|v_x(0,t)|^2\bigr)
\right)\dd t \notag\\
&\qquad
+C\int_0^T e^{2s\varphi(R_0,t)}
\left(
(-s)^3\varphi(R_0,t)^3|v(R_0,t)|^2
+(-s)\varphi(R_0,t)
\bigl(|v_t(R_0,t)|^2+|v_x(R_0,t)|^2\bigr)
\right)\dd t. \notag
\end{align}
\end{thm}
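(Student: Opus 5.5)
The plan is to follow the standard conjugation argument on the finite cylinder \(Q_{R_0}\). With \(w=e^{s\varphi}v\) and \(Pw=e^{s\varphi}\square v=P_+w+P_-w+Rw\), we start from
\[
\|P_+w\|^2+\|P_-w\|^2+2\langle P_+w,P_-w\rangle=\|Pw-Rw\|^2\le 2\|e^{s\varphi}\square v\|^2+2\|Rw\|^2 .
\]
For \(\alpha=1\) we have \(Rw=-s\lambda\varphi\,\square\psi\,w\), so \(\|Rw\|^2=\int_{Q_{R_0}}s^2\lambda^2(\varphi\square\psi)^2w^2\). This is exactly the term subtracted in \(Z_s\); moving it to the left produces \(Z_s\) in place of \(L_2\psi\). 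Dropping \(\|P_\pm w\|^2\ge0\), we obtain
\[
2\sum_{i,j}J_{i,j}-2\int s^2\lambda^2(\varphi\square\psi)^2w^2\le 2\int e^{2s\varphi}|\square v|^2 .
\]
It is convenient to halve the factors throughout.

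Next we expand \(\sum J_{i,j}\) by the identity \eqref{eq:sum_Jbeta}; the terms \(G(T)\) and \(G(0)\) vanish by \eqref{eq:time_vanish}. Since \(s<0\), the \(\beta\)-portion of the first-order terms equals \((-s)\int[-\beta(L\psi+L_1\psi)]|w_t|^2+(-s)\int\beta c^2(L\psi-L_1\psi)|w_x|^2\). By Lemma~\ref{lem:bulk_coercivity} this is at least \((-s)M_1\int|w_t|^2+(-s)M_2\int|w_x|^2\).

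The \((1-\beta)\)-portion together with the mixed term \(-4sc^2\int\varphi_{xt}w_tw_x\) forms a pointwise quadratic form in \((w_t,w_x)\), multiplied by \(-s>0\). Its diagonal coefficients are \(-(1-\beta)(L\psi+L_1\psi)>0\) and \(c^2(1-\beta)(L\psi-L_1\psi)>0\) by \eqref{cond:wtwx}, and its off-diagonal coefficient is \(2c^2\varphi_{xt}\). It is nonnegative provided
\[
4c^4\varphi_{xt}^2\le c^2(1-\beta)^2\bigl((L_1\psi)^2-(L\psi)^2\bigr),
\]
which is precisely \eqref{cond:control_cross_beta}. I would invoke the verification of this condition for the range \eqref{LMMresult_paracst} from Appendix~\ref{secapd:para_ranges}. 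This is where the upper bound on \(\beta\) enters, and it is the step I expect to be the main obstacle if that verification has to be redone. The plan is to compute \(\varphi_{xt}=\lambda\varphi(\psi_{xt}+\lambda\psi_x\psi_t)\) in terms of \(A\) and \(R\), factor out \(c^4\lambda^2\varphi^2A^{-4}\), and reduce the condition to a polynomial inequality in \(1/R\in(0,1]\). The extremal case should reproduce the square-root threshold in the statement. Granting this, the whole \((1-\beta)\) block is nonnegative and can be discarded.

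The zero-order term combines with \(-\|Rw\|^2\) into \(\int Z_sw^2\). The left-hand side is then controlled by \eqref{eq:w_to_v_coercive}, giving \(c_0\) times the target weighted \(v\)-norm. What remains is \(F(R_0)-F(0)\). Moving it to the right, we bound \(|F(0)|+|F(R_0)|\) by Lemma~\ref{lem:F_bound}, which yields exactly the two boundary integrals in \eqref{eq:global_carleman}. Taking \(s\le-S_0\), with \(S_0\) from Lemma~\ref{lem:bulk_coercivity}, and dividing by \(c_0\) gives the estimate. All constants depend only on \(c,T,\lambda,\beta\) and not on \(R_0\), because every lemma used provides bounds that are uniform in \(R_0\) and pointwise in \(Q\).
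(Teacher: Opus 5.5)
Your proposal is correct and follows the paper's proof essentially step by step. The shared steps are: expanding $\|Pw-Rw\|^2$ with Young's inequality; absorbing $\|Rw\|^2$ (for $\alpha=1$) to turn $L_2\psi$ into $Z_s$; the $\beta$-splitting of \eqref{eq:sum_Jbeta}, with Lemma~\ref{lem:bulk_coercivity} handling the $\beta$-part and the appendix verification of \eqref{cond:control_cross_beta} making the $(1-\beta)$ mixed block nonnegative; and Lemma~\ref{lem:F_bound} bounding $|F(0)|+|F(R_0)|$. Your explicit check that the determinant condition of that $2\times 2$ form is exactly \eqref{cond:control_cross_beta}, with the diagonal signs coming from \eqref{cond:wtwx}, is a small clarification the paper leaves implicit.
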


\begin{proof}
By the decomposition \(Pw=P_+w+P_-w+Rw\),
\[
    Pw-Rw=P_+w+P_-w.
\]
Therefore
\begin{equation}\label{eq:Pw_identity_revised}
\int_{Q_{R_0}}(Pw-Rw)^2
=
\int_{Q_{R_0}}(P_+w)^2
+
\int_{Q_{R_0}}(P_-w)^2
+2\sum_{i,j=1}^3J_{i,j}.
\end{equation}
Young's inequality gives
\[
\int_{Q_{R_0}}(Pw-Rw)^2
\leq
2\int_{Q_{R_0}}(Pw)^2+2\int_{Q_{R_0}}(Rw)^2.
\]
Since the two square terms on the right-hand side of \eqref{eq:Pw_identity_revised} are nonnegative, it follows that
\begin{equation}\label{eq:sumJ_upper_revised}
\sum_{i,j=1}^3J_{i,j}
\leq
\int_{Q_{R_0}}e^{2s\varphi}|\square v|^2
+\int_{Q_{R_0}}s^2\lambda^2(\varphi\square\psi)^2w^2,
\end{equation}
where we used \(Pw=e^{s\varphi}\square v\) and \(\alpha=1\) in $Rw$.

On the other hand, the parameter verification in Appendix \ref{secapd:para_ranges} gives \eqref{cond:control_cross_beta}; hence the mixed first-order block in \eqref{eq:sum_Jbeta}
\begin{equation*}
 \int_{Q_{R_0}} s(1-\beta)(L\psi + L_1\psi)\, |\partial_t w|^2
   + \int_{Q_{R_0}} -s(1-\beta)c^2 (L\psi - L_1\psi)\, |\partial_x w|^2 
- \int_{Q_{R_0}} 4s c^2 \partial_{x,t}^2 \varphi\, \partial_t w\, \partial_x w
\end{equation*}
is nonnegative.
Together with \eqref{eq:M1_bound} and \eqref{eq:M2_bound}, equation \eqref{eq:sum_Jbeta} yields
\begin{equation}\label{eq:sumJ_lower_revised}
\begin{aligned}
\sum_{i,j=1}^3J_{i,j}
&\geq
(-s)M_1\int_{Q_{R_0}}|w_t|^2
+(-s)M_2\int_{Q_{R_0}}|w_x|^2
+\int_{Q_{R_0}}L_2\psi\,w^2 \\
&\quad +F(R_0)-F(0).
\end{aligned}
\end{equation}
Combining \eqref{eq:sumJ_upper_revised} and \eqref{eq:sumJ_lower_revised}, and moving the term involving \((\varphi\square\psi)^2w^2\) to the left, gives
\[
\begin{aligned}
&(-s)M_1\int_{Q_{R_0}}|w_t|^2
+(-s)M_2\int_{Q_{R_0}}|w_x|^2
+\int_{Q_{R_0}}Z_sw^2 \\
&\qquad\leq
\int_{Q_{R_0}}e^{2s\varphi}|\square v|^2
-F(R_0)+F(0) \\
&\qquad\leq
\int_{Q_{R_0}}e^{2s\varphi}|\square v|^2
+|F(R_0)|+|F(0)|,
\end{aligned}
\]
where we recall \(Z_s\) defined in \eqref{cond:w2_coer}.
Lemma \ref{lem:bulk_coercivity} controls the left-hand side from below by
\[
c_0\int_{Q_{R_0}}e^{2s\varphi}
\left(
(-s)^3\varphi^3|v|^2
+(-s)(|v_t|^2+|v_x|^2)
\right)\dd x\dd t,
\]
and Lemma \ref{lem:F_bound} controls the two spatial fluxes \(\vert F(R_0)\vert\) and \(\vert F(0)\vert\) by the two boundary integrals appearing on the right-hand side of \eqref{eq:global_carleman}.
The constants in these estimates are independent of \(R_0\).
This proves \eqref{eq:global_carleman} after changing the constant.
\end{proof}

\begin{cor}[Global Carleman estimate on the half-line]\label{cor:global_halfline_carleman}
Let \(v\) satisfy \eqref{eq:time_vanish} and
\[
v\in
C\bigl([0,T];H^2(0,\infty)\bigr)
\cap
C^1\bigl([0,T];H^1(0,\infty)\bigr)
\cap
C^2\bigl([0,T];L^2(0,\infty)\bigr).
\]
Then there exist constants \(C>0\) and \(S_0\geq1\), depending on \(c,T,\lambda,\beta\), such that for every \(s\leq-S_0\),
\begin{align}
&\int_Qe^{2s\varphi}
\left(
(-s)^3\varphi^3|v|^2
+(-s)(|v_t|^2+|v_x|^2)
\right)\dd x\dd t \notag\\
&\quad\leq
C\int_Qe^{2s\varphi}|\square v|^2\dd x\dd t \label{eq:global_halfline_ce}\\
&\qquad
+C\int_0^T e^{2s\varphi(0,t)}
\left(
(-s)^3\varphi(0,t)^3|v(0,t)|^2
+(-s)\varphi(0,t)
\bigl(|v_t(0,t)|^2+|v_x(0,t)|^2\bigr)
\right)\dd t. \notag
\end{align}
\end{cor}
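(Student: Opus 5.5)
The plan is to obtain the corollary from Theorem~\ref{thm:global_carleman} by letting \(R_0\to\infty\). Two things must be checked: that the theorem applies to \(v\) of the stated regularity on every \(Q_{R_0}\), and that the flux term at \(x=R_0\) vanishes along a suitable sequence \(R_0^{(k)}\to\infty\).

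\textbf{Step 1 (density).} First I remove the smoothness assumption. Mollify \(v\) in \(t\) and in \(x\) (after a smooth even or Sobolev extension across \(x=0\)) to get smooth \(v_n\) on \(\overline{Q_{R_0}}\). For \(n\) large, \(v_n\) still vanishes near \(t=0,T\), because \(v\) vanishes on a neighborhood of those times. The weights \(e^{2s\varphi}\), \((-s)^3\varphi^3\) and \((-s)\varphi\) are bounded on the bounded set \(\overline{Q_{R_0}}\), so the interior terms of \eqref{eq:global_carleman} converge as \(n\to\infty\). For the boundary traces at \(x=0\) and \(x=R_0\), the regularity \(C([0,T];H^2)\cap C^1([0,T];H^1)\) puts \(v,v_x,v_t\) in \(C([0,T];H^1)\). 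One-dimensional Sobolev embedding then makes the point traces continuous in \(t\), so they converge uniformly. Hence \eqref{eq:global_carleman} holds for \(v\) itself, with the same \(C\) and \(S_0\) for every \(R_0\).

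\textbf{Step 2 (limit).} As \(R_0\to\infty\), monotone convergence sends the left-hand side of \eqref{eq:global_carleman} to the \(Q\)-integral. The source term is at most \(C\int_Q e^{2s\varphi}|\square v|^2\), and this is finite because \(e^{2s\varphi}\le 1\) and \(\square v\in L^2(Q)\). The \(x=0\) term does not depend on \(R_0\).

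\textbf{Step 3 (the \(x=R_0\) flux; main obstacle).} It remains to show that the flux term at \(x=R_0\) vanishes along some sequence \(R_0\to\infty\). Here \(\varphi(R_0,t)=A^{-\lambda}B^{-\lambda}\ge (1+R_0)^{-\lambda}\to\infty\), since \(\lambda<0\). The elementary bound \(\sup_{y\ge1}y^3e^{2sy}<\infty\) for \(s<0\) then gives
\[
e^{2s\varphi}\bigl((-s)^3\varphi^3+(-s)\varphi\bigr)\le C_s\quad\text{uniformly, and}\quad \to 0 \text{ as } R_0\to\infty .
\]
So it suffices that \(\int_0^T(|v|^2+|v_t|^2+|v_x|^2)(R_0,t)\,dt\) stays bounded along a sequence; in fact it tends to zero. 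To see this, set \(h(r)=\int_0^T(|v|^2+|v_t|^2+|v_x|^2)(r,t)\,dt\). Then \(h\in L^1(0,\infty)\), since \(v,v_t,v_x\in C([0,T];L^2)\), and \(h\) is absolutely continuous with \(h'\in L^1\), since \(v_{xx},v_{tx}\in L^2\) and Cauchy--Schwarz apply. Therefore \(h(r)\to0\) as \(r\to\infty\). Even without this, integrability alone gives a sequence along which \(h\) tends to zero. Either way the \(R_0\)-boundary term tends to zero, and passing to the limit yields \eqref{eq:global_halfline_ce} with the same constants.
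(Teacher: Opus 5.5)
Your proposal is correct and follows the paper's route: apply Theorem~\ref{thm:global_carleman} on $Q_{R_0}$ with $R_0$-independent constants, pass to the limit in the bulk and residual terms by monotone convergence, and kill the $x=R_0$ flux using $\varphi(R_0,t)\geq(1+R_0)^{-\lambda}$, so that $\sup_t e^{2s\varphi(R_0,t)}\varphi(R_0,t)^k\to0$. The differences are minor: the paper bounds $|v|+|v_t|+|v_x|$ at $x=R_0$ pointwise and uniformly in $t$ via the one-dimensional Sobolev embedding rather than through your integrable trace function $h(R_0)$, and it places the density step after the limit, whereas you carry it out on each finite cylinder first.
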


\begin{proof}
We first prove the estimate for smooth \(v\) satisfying the stated finite-energy bounds.
The asserted regularity class then follows by a standard density argument preserving the time-endpoint vanishing condition.
Apply Theorem \ref{thm:global_carleman} on \(Q_{R_0}\).
The constants are independent of \(R_0\).

The regularity \(v\in C([0,T];H^2(0,\infty))\) gives uniform-in-time \(H^1\)-control of \(v\) and \(v_x\), while \(v_t\in C([0,T];H^1(0,\infty))\) gives uniform-in-time \(H^1\)-control of \(v_t\).
By the one-dimensional Sobolev inequality,
\[
    |v(R_0,t)|+|v_t(R_0,t)|+|v_x(R_0,t)|
    \leq C_v
    \qquad\text{for all }R_0>0,\quad t\in[0,T].
\]
Moreover, uniformly for \(t\in[0,T]\),
\[
    m_0(1+R_0)^{-\lambda}
    \leq
    \varphi(R_0,t)
    \leq
    M(1+R_0)^{-\lambda}
\]
with constants \(m_0,M>0\) depending only on \(c,T,\lambda\).
Since \(\lambda<0\) and \(s<0\), for \(k=1,3\),
\[
    \sup_{t\in[0,T]}e^{2s\varphi(R_0,t)}\varphi(R_0,t)^k
    \leq
    M^k(1+R_0)^{-k\lambda}
    e^{2s m_0(1+R_0)^{-\lambda}}
    \to0
    \qquad\text{as }R_0\to\infty.
\]
Thus the full right-endpoint boundary integral in \eqref{eq:global_carleman} tends to zero.
The left-hand side and the residual term converge to the corresponding integrals over \(Q\) by monotone convergence applied to nonnegative integrands.
This gives \eqref{eq:global_halfline_ce}.
\end{proof}

%% file: contents/40_cauchy_stability.tex
\section{Weighted conditional lateral Cauchy stability}\label{sec:cauchy_stability}

Let
\[
    I\Subset I^\ast\Subset(0,T)
\]
be open time intervals.
Choose \(\eta\in C_c^\infty(I^\ast)\) such that
\begin{equation}\label{eq:eta}
    0\leq\eta (t) \leq1,
    \qquad
    \eta=1\quad\text{on }I.
\end{equation}
Let
\begin{equation}\label{eq:transition_time}
    \cT:=\operatorname{supp}\eta'\cup\operatorname{supp}\eta''
    \subset I^\ast\setminus I.
\end{equation}

\begin{thm}[Weighted conditional lateral Cauchy stability]\label{thm:weighted_cauchy_stability}
Let \(z\) have the same global finite-energy regularity as in Corollary \ref{cor:global_halfline_carleman}.
Then there exist constants \(C>0\), \(C_\eta>0\), and \(S\geq1\), depending on \(c,T,\lambda,\beta\) and on finitely many bounds for \(\eta\), such that for every \(s\leq-S\),
\begin{align}
&\int_{(0,\infty)\times I}e^{2s\varphi}
\left(
(-s)^3\varphi^3|z|^2
+(-s)(|z_t|^2+|z_x|^2)
\right)\dd x\dd t \notag\\
&\quad\leq
C\int_{(0,\infty)\times I^\ast}e^{2s\varphi}|\square z|^2\dd x\dd t \label{eq:weighted_cauchy_stability}\\
&\qquad
+C\int_{I^\ast}e^{2s\varphi(0,t)}
\left(
(-s)^3\varphi(0,t)^3|z(0,t)|^2
+(-s)\varphi(0,t)
\bigl(|z_t(0,t)|^2+|z_x(0,t)|^2\bigr)
\right)\dd t \notag\\
&\qquad
+C_\eta\int_{(0,\infty)\times\cT}e^{2s\varphi}
\bigl(|z_t|^2+|z|^2\bigr)\dd x\dd t. \notag
\end{align}
\end{thm}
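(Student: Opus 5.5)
Apply Corollary~\ref{cor:global_halfline_carleman} to the time-cutoff function \(v:=\eta z\), with \(\eta\) as in \eqref{eq:eta}.

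\textbf{Setting up \(v\).} Since \(\eta\in C_c^\infty(I^\ast)\) and \(I^\ast\Subset(0,T)\), \(v\) vanishes near \(t=0\) and \(t=T\), so \eqref{eq:time_vanish} holds. Multiplication by a smooth function of \(t\) alone preserves the regularity class, so the corollary applies. Its constants \(C,S_0\) depend only on \(c,T,\lambda,\beta\), and we take \(S\geq S_0\).

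\textbf{Left-hand side.} Since \(\eta=1\) on \(I\), we have \(v=z\), \(v_t=z_t\), \(v_x=z_x\) on \((0,\infty)\times I\). The integrand is nonnegative, so the left-hand side of \eqref{eq:global_halfline_ce} dominates the left-hand side of \eqref{eq:weighted_cauchy_stability}.

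\textbf{Lateral boundary term.} On \(x=0\),
\[
v=\eta z,\qquad v_x=\eta z_x,\qquad v_t=\eta z_t+\eta' z,
\]
all supported in \(I^\ast\). Using \(0\le\eta\le1\), \(|\eta'|\le C_\eta\), and \((-s)\varphi|\eta' z|^2\le C_\eta(-s)^3\varphi^3|z|^2\) (valid since \(-s\ge1\) and \(\varphi\ge1\)), this term is bounded by the boundary integral over \(I^\ast\) in \eqref{eq:weighted_cauchy_stability}.

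\textbf{Source term.} The commutator is
\[
\square v=\eta\,\square z+2\eta' z_t+\eta'' z .
\]
Hence
\[
e^{2s\varphi}|\square v|^2\le 2e^{2s\varphi}|\square z|^2\mathbf 1_{I^\ast}+C_\eta e^{2s\varphi}\bigl(|z_t|^2+|z|^2\bigr)\mathbf 1_{\cT},
\]
because \(\eta'\) and \(\eta''\) are supported in \(\cT\) by \eqref{eq:transition_time}. This produces the first and last terms of \eqref{eq:weighted_cauchy_stability}.

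\textbf{Where care is needed.} The main point is the commutator term. It carries no large parameter, so it cannot be absorbed into the left-hand side. This is harmless here only because it lives on \(\cT\subset I^\ast\setminus I\), which is disjoint from the region \((0,\infty)\times I\) being estimated. One should therefore not try to absorb it, but keep it explicitly as the transition-layer term with constant \(C_\eta\).

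The other point to check is that the cutoff does not spoil the half-line passage behind Corollary~\ref{cor:global_halfline_carleman}. This holds because the cutoff acts only in \(t\), so \(v(R_0,\cdot)\) inherits the uniform Sobolev bounds and the right-endpoint decay from \(z\). Collecting the three bounds gives \eqref{eq:weighted_cauchy_stability}.
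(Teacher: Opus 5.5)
Your proposal is correct and follows essentially the same route as the paper's proof. You apply Corollary~\ref{cor:global_halfline_carleman} to $v=\eta z$, use $\eta=1$ on $I$ to dominate the left-hand side, and expand $\square(\eta z)=\eta\,\square z+2\eta' z_t+\eta'' z$ with the commutator localized to $(0,\infty)\times\cT$; you then bound the lateral trace term via $(\eta z)_t=\eta z_t+\eta' z$, $(\eta z)_x=\eta z_x$, $-s\ge1$ and $\varphi\ge1$, exactly as the paper does.
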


\begin{proof}
Set
\[
    v=\eta z.
\]
Since \(\eta\in C_c^\infty(I^\ast)\), the function \(v\) vanishes in a neighborhood of \(t=0\) and \(t=T\).
It also has the finite-energy regularity required in Corollary \ref{cor:global_halfline_carleman}.
Applying that corollary to \(v=\eta z\) gives
\begin{align}
&\int_Qe^{2s\varphi}
\left(
(-s)^3\varphi^3|\eta z|^2
+(-s)(|(\eta z)_t|^2+|(\eta z)_x|^2)
\right)\dd x\dd t \notag\\
&\quad\leq
C\int_Qe^{2s\varphi}|\square(\eta z)|^2\dd x\dd t \label{eq:apply_carleman_eta}\\
&\qquad
+C\int_0^T e^{2s\varphi(0,t)}
\left(
(-s)^3\varphi(0,t)^3|\eta z(0,t)|^2
+(-s)\varphi(0,t)
\bigl(|(\eta z)_t(0,t)|^2+|(\eta z)_x(0,t)|^2\bigr)
\right)\dd t. \notag
\end{align}
Because \(\eta=1\) on \(I\), the nonnegative left-hand side of \eqref{eq:apply_carleman_eta} controls the left-hand side of \eqref{eq:weighted_cauchy_stability}.
We now estimate the residual term.
Since \(\eta\) depends only on \(t\),
\[
\begin{aligned}
\square(\eta z)
&=(\eta z)_{tt}-c^2(\eta z)_{xx}  \\
&=\eta(z_{tt}-c^2z_{xx})+2\eta' z_t+\eta''z
=\eta\square z+2\eta' z_t+\eta''z.
\end{aligned}
\]
Thus
\begin{align}\label{eq:eta_residual_bound}
\int_Qe^{2s\varphi}|\square(\eta z)|^2\dd x\dd t
&\leq
C\int_{(0,\infty)\times I^\ast}e^{2s\varphi}|\square z|^2\dd x\dd t \notag\\
&\quad
+C_\eta\int_{(0,\infty)\times\cT}e^{2s\varphi}
\bigl(|z_t|^2+|z|^2\bigr)\dd x\dd t.
\end{align}
It remains to estimate the boundary term in \eqref{eq:apply_carleman_eta}.
At \(x=0\),
\[
    (\eta z)_t=
    \eta z_t+\eta'z,
    \qquad
    (\eta z)_x=\eta z_x.
\]
Using \(-s\geq1\), \(\varphi(0,t)\geq1\), and the boundedness of \(\eta\) and \(\eta'\), we get
\begin{align}\label{eq:boundary_eta_bound}
&\int_0^T e^{2s\varphi(0,t)}
\left(
(-s)^3\varphi(0,t)^3|\eta z(0,t)|^2
+(-s)\varphi(0,t)
\bigl(|(\eta z)_t(0,t)|^2+|(\eta z)_x(0,t)|^2\bigr)
\right)\dd t \notag\\
&\quad\leq
C_\eta\int_{I^\ast}e^{2s\varphi(0,t)}
\left(
(-s)^3\varphi(0,t)^3|z(0,t)|^2
+(-s)\varphi(0,t)
\bigl(|z_t(0,t)|^2+|z_x(0,t)|^2\bigr)
\right)\dd t.
\end{align}
Combining \eqref{eq:apply_carleman_eta}, \eqref{eq:eta_residual_bound}, and \eqref{eq:boundary_eta_bound} proves \eqref{eq:weighted_cauchy_stability}.
\end{proof}

%% file: contents/50_reconstruction.tex
\section{Finite-depth one-sided reconstruction on the half-line}
\label{sec:qi_extension}

We now use the half-line Carleman estimate in a finite-depth inverse problem for a semilinear wave equation.  We fix
\begin{equation}\label{eq:qi_depth}
    0<X<cT .
\end{equation}
The unknown recovered by the estimate is
\begin{equation}\label{eq:qi_unknown}
    p(x)=u(x,0),\qquad 0<x<X,
\end{equation}
where
\begin{equation}\label{eq:qi_model}
 \begin{cases}
 \square u+\mathcal N(x,t,u,u_t,u_x)=f,
        &(x,t)\in Q=(0,\infty)\times(0,T),\\
 u(0,t)=h(t),\quad u_x(0,t)=g(t),&0<t<T,\\
 u_t(x,0)=\varphi_1(x),&x>0.
 \end{cases}
\end{equation}

\begin{rem}
No value at an artificial endpoint \(x=X\) is prescribed, and the initial tail on \((X,\infty)\) is treated as part of the unknown half-line state.  In the finite-depth estimate below, the negative tail contribution from the endpoint flux is discarded, while the positive endpoint contribution is kept through the graph regularization.
\end{rem}
Throughout this section we assume that \(\mathcal N\) is globally Lipschitz in \((u,u_t,u_x)\).  Thus there are constants \(K_0,K_1,K_2\ge0\) such that
\begin{align}\label{eq:qi_lipschitz}
 &|\mathcal N(x,t,a,b,d)-\mathcal N(x,t,\tilde a,\tilde b,\tilde d)| \le K_0|a-\tilde a|+K_1|b-\tilde b|+K_2|d-\tilde d|.
\end{align}

We keep the Carleman phase from the previous sections and fix \(-1/3<\lambda<0\).  Thus
\begin{equation}\label{eq:qi_phase}
    \varphi(x,t)=e^{\lambda\psi(x,t)}
    =(1+ct)^{-\lambda}(1+x+ct)^{-\lambda}.
\end{equation}
For \(s<0\) define the normalized weight
\begin{equation}\label{eq:qi_weight}
    \omega_s(x,t)=\exp[s(\varphi(x,t)-1)],
    \qquad
    \omega_{s,0}(x)=\omega_s(x,0).
\end{equation}
{The normalized weight is obtained by multiplying \(e^{s\varphi}\) by the constant factor \(e^{-s}\), so that \(\omega_s(0,0)=1\).}
Define the weighted bulk energy
\begin{equation}\label{eq:qi_bulk_energy}
 E_s[v]=\int_Q\omega_s^2
 \left\{(-s)^3\varphi^3|v|^2+(-s)(|v_t|^2+|v_x|^2)\right\}\dd x\dd t,
\end{equation}
and set
\[
    \rho_s(x)=\omega_{s,0}(x)^2\varphi(x,0)^3 .
\]
For \(m\ge1\), let \(\mathcal X_s^m\) be the weighted Sobolev space
\[
    \mathcal X_s^m
    =\{v\in\mathcal D'(Q):
    \partial_x^j\partial_t^k v\in L^2(Q,\rho_s(x)\dd x\dd t),
    \ j+k\leq m\},
\]
endowed with the norm
\begin{equation}\label{eq:qi_Xnorm}
    \|v\|_{\mathcal X_s^m}^2
    =\sum_{j+k\leq m}\int_Q\rho_s(x)
    |\partial_x^j\partial_t^k v|^2\dd x\dd t.
\end{equation}
For fixed \(s<0\), this is a Hilbert space.  Since \(\rho_s\) is smooth and strictly positive, with bounded logarithmic derivatives, the standard trace theorem applies; all boundary and initial conditions hereafter in this section are understood in this trace sense.

Set
\begin{equation}\label{eq:qi_mr}
    a_0(x)=\lambda^2\varphi(x,0)\varphi_t(x,0)(\widetilde\square\psi)(x,0),
    \qquad
    r_0(x)=-\frac{\lambda^2}{2}\partial_t(\varphi\widetilde\square\psi)(x,0).
\end{equation}
At \(t=0\),
\begin{align*}
 \varphi_t(x,0)&=-c\lambda(1+x)^{-\lambda-1}(x+2)>0,\\
 (\widetilde\square\psi)(x,0)&=c^2\frac{x+3}{x+1}>0,\\
 -\partial_t(\varphi\widetilde\square\psi)(x,0)
 &=c^3(1+x)^{-\lambda-2}
 \bigl[(\lambda+2)x^2+(5\lambda+6)x+6(\lambda+1)\bigr]>0.
\end{align*}
Hence \(a_0\) and \(r_0\) are positive on the half-line.  For a trace \(p\), define
\begin{align}\label{eq:qi_endpoint_functionals}
    I_{s,X}[p]
    &=\int_0^X\omega_{s,0}^2a_0(x)|p(x)|^2\dd x,\notag\\
    \mathcal G_s[p]
    &=(-s)\int_0^\infty\omega_{s,0}^2
      \left[c^2\varphi_t(x,0)|p_x(x)|^2
      +\left(r_0(x)+2s^2\varphi(x,0)a_0(x)\right)|p(x)|^2\right]\dd x.
\end{align}
The same notation \(I_{s,\infty}\) means that the first integral is taken over \((0,\infty)\).  The additional term $2s^2\varphi(x,0)a_0(x)$ arises because the corrected last term in the temporal flux \eqref{eq:GinsumJ} adds to the other cubic contribution.

\subsection{Finite-depth Carleman control without a right boundary}\hfill

Choose \(X/c<T_-<T_+<T\), and let \(\chi=\chi_X\in C^\infty([0,T])\) satisfy
\begin{equation}\label{eq:qi_chi}
    0\leq\chi\leq1,
    \qquad
    \chi=1\text{ on }[0,T_-],
    \qquad
    \chi=0\text{ on }[T_+,T].
\end{equation}
Since \(\varphi_t>0\), the transition region is separated from the initial slice by the weight.  Namely,
\begin{equation}\label{eq:qi_gammaX}
    \gamma=\inf_{x\geq0,\,t\in\supp\chi'}
    \bigl(\varphi(x,t)-\varphi(x,0)\bigr)>0 .
\end{equation}
For the variational functionals below, assume also that
\begin{equation}\label{eq:qi_weighted_source_assumption}
    \omega_s\chi f\in L^2(Q),
    \qquad
    \omega_s\chi\mathcal N(\cdot,\cdot,0,0,0)\in L^2(Q).
\end{equation}
Then \eqref{eq:qi_lipschitz} implies
\(\omega_s\chi\mathcal N(U)\in L^2(Q)\) for every \(U\in\mathcal X_s^m\), \(m\geq1\).

We shall apply the estimate to the difference
$
z = u - \tilde{u}
$
of two solutions of \eqref{eq:qi_model} having the same boundary and initial data.
Then \(z\) satisfies the homogeneous conditions
\begin{equation}\label{eq:qi_homogeneous_data}
    z(0,t)=z_x(0,t)=0,\qquad z_t(x,0)=0.
\end{equation}

\begin{prop}[Finite-depth endpoint Carleman estimate]\label{thm:qi_finite_depth_CE}\label{prop:qi_obstruction}
For \(-s\) sufficiently large and every \(m\geq4\), every \(z\in\mathcal X_s^m\) satisfying the homogeneous trace conditions \eqref{eq:qi_homogeneous_data} obeys
\begin{align}\label{eq:qi_FDCE}
    E_s[\chi z]+s^2I_{s,X}[z(\cdot,0)]
    \leq C_X\Bigl(
    \|\omega_s\chi\square z\|_{L^2(Q)}^2
    +\mathcal G_s[z(\cdot,0)]
    +e^{2s\gamma}\|z\|_{\mathcal X_s^1}^2
    \Bigr).
\end{align}
\end{prop}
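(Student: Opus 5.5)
The plan is to rerun the proof of Theorem~\ref{thm:global_carleman} for $v=\chi z$ and $w=e^{s\varphi}v$ on $Q_{R_0}$. The one change is that the vanishing condition \eqref{eq:time_vanish} now holds only near $t=T$: since $\chi=0$ on $[T_+,T]$, we have $G(T)=0$, but $G(0)$ must be kept. By density I would first take $z$ smooth, which is where $m\geq4$ enters, so that all traces and integrations by parts are legitimate.

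\textbf{Step 1: the spatial fluxes.} At $x=0$, the conditions $z(0,t)=z_x(0,t)=0$ give $w=w_x=w_t=0$ there, so $F(0)=0$ identically. As $R_0\to\infty$, $F(R_0)\to0$ exactly as in Corollary~\ref{cor:global_halfline_carleman}, using Lemma~\ref{lem:F_bound} and the decay of $e^{2s\varphi}\varphi^k$.

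\textbf{Step 2: the bulk.} Combining \eqref{eq:sumJ_upper_revised} with the lower bound coming from \eqref{eq:sum_Jbeta} and Lemma~\ref{lem:bulk_coercivity} gives
\[
c_0E_s[\chi z]-G(0)\leq\|\omega_s\square(\chi z)\|_{L^2(Q)}^2 ,
\]
after multiplying through by the constant $e^{-2s}$ to pass to the normalized weight $\omega_s$.

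\textbf{Step 3: the commutator.} Write $\square(\chi z)=\chi\square z+2\chi'z_t+\chi''z$. The last two terms are supported where $t\in\operatorname{supp}\chi'$, and there \eqref{eq:qi_gammaX} gives
\[
\omega_s(x,t)^2\leq e^{2s\gamma}\omega_{s,0}(x)^2 .
\]
The extra powers of $(-s)$ and $\varphi$ that appear are absorbed by replacing $\gamma$ with a slightly smaller constant, or equivalently by enlarging $C_X$. What remains is bounded by $e^{2s\gamma}\|z\|_{\mathcal X_s^1}^2$, since $\rho_s=\omega_{s,0}^2\varphi(\cdot,0)^3\geq\omega_{s,0}^2$.

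\textbf{Step 4: expanding $-G(0)$.} This is the heart of the proof. At $t=0$ we have $\chi=1$ and $z_t=0$, so
\[
w_t=s\varphi_t w,\qquad w_x=e^{s\varphi}\bigl(z_x+s\varphi_x z\bigr).
\]
Substituting into \eqref{eq:GinsumJ} with $\alpha=1$, the term $-s\lambda^2\int w_tw\varphi\widetilde\square\psi$ becomes $-s^2\int a_0w^2$. Hence $-G(0)$ contains $+s^2\int a_0w^2$, which after normalization is the left-hand quantity $s^2I_{s,\infty}[z(\cdot,0)]$. The remaining terms of $-G(0)$ are:
\begin{itemize}
\item the $r_0$-term coming from $\tfrac{s\lambda^2}{2}\int w^2\partial_t(\varphi\widetilde\square\psi)$;
\item the term $sc^2\int\varphi_t|w_x|^2$;
\item the two cubic terms $s\lambda\int\varphi\psi_t w_t^2=s^3\int\varphi\varphi_t^2w^2$ and $s^3\lambda^3\int w^2\varphi^3\widetilde\square\psi\,\psi_t$, which combine into $2s^3\int\varphi a_0w^2$;
\item the cross term $-2sc^2\int w_t\varphi_x w_x$.
\end{itemize}
Since $s<0$, the $r_0$, $\varphi_t|w_x|^2$ and cubic terms are all bounded above in absolute value by the integrand of $\mathcal G_s[z(\cdot,0)]$. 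The two corrections need separate handling: the pieces $s\varphi_x z$ inside $w_x$, and the cross term. I would handle both by Cauchy--Schwarz with a parameter, splitting each between $(-s)\varphi_t|p_x|^2$ and $(-s)^3\varphi a_0|p|^2$. This should work because $\varphi_x$ and $\varphi_t$ are comparable multiples of $\varphi$ at $t=0$, and $a_0\gtrsim\varphi\varphi_t$.

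\textbf{Main obstacle.} I expect the main difficulty to be the sign and size bookkeeping in Step~4, and in particular confirming that every bad term is dominated by $\mathcal G_s$ uniformly in $x$. My first attempt would be a careful pointwise comparison of $\varphi_x^2$, $\varphi_t^2$, $a_0$ and $r_0$ at $t=0$ as functions of $(1+x)$. If some combination fails for large $x$, I would keep the positive $s^2a_0$ term only on $(0,X)$ and drop the tail contribution, as the remark preceding the proposition indicates. This truncation is the source of the $X$-dependence of $C_X$, together with $\gamma=\gamma(X)$ through the choice $T_->X/c$.
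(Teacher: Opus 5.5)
Your architecture is the paper's. You rerun the proof of Theorem~\ref{thm:global_carleman} for $v=\chi z$, use $G(T)=0$ and $F(0)=0$, keep $G(0)$, and control $2\chi'z_t+\chi''z$ via \eqref{eq:qi_gammaX} and $\rho_s\geq\omega_{s,0}^2$. The real difference is Step~4.

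\textbf{Step 4: exact identity versus Cauchy--Schwarz.} The paper does not estimate $G(0)$; it evaluates it exactly. With $w_t=s\varphi_tw$ and $w_x=e^{s\varphi}(p_x+s\varphi_xp)$, the $pp_x$ pieces of $2sc^2\int w_t\varphi_xw_x$ and $-sc^2\int\varphi_t|w_x|^2$ cancel identically. All cubic pieces then add up to $2(-s)^3\varphi a_0|p|^2$, giving $e^{-2s}G(0)=\mathcal G_s[p]-s^2I_{s,\infty}[p]$ with no loss. This is why $\mathcal G_s$ carries exactly the coefficients $c^2\varphi_t$ and $r_0+2s^2\varphi a_0$.

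Your Cauchy--Schwarz route gives only $e^{-2s}G(0)\leq C\mathcal G_s[p]-s^2I_{s,\infty}[p]$. That suffices, since $C_X$ is arbitrary. The comparisons you need hold uniformly in $x$: at $t=0$, $c^2\psi_x^2=\widetilde\square\psi/\bigl((1+x)(3+x)\bigr)\leq\tfrac13\widetilde\square\psi$ and $\psi_t^2\leq\tfrac43\widetilde\square\psi$. Hence $c^2\varphi_t\varphi_x^2\leq\tfrac13\varphi a_0$ and $\varphi_t^3\leq\tfrac43\varphi a_0$. So the fallback in your last paragraph is never needed. Passing from $I_{s,\infty}$ to $I_{s,X}$ is free because $a_0>0$, and $C_X$ depends on $X$ only through $\chi_X$. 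Your route is more tolerant of algebra; the paper's identifies the natural endpoint form.

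Your evaluation of the cubic terms is off, though harmless since you bound in absolute value. In fact $s\lambda\int\varphi\psi_tw_t^2=s^3\int\varphi_t^3w^2$. The two terms you list sum to $s^3\lambda^3\int\varphi^3\psi_t(\psi_t^2+\widetilde\square\psi)w^2$. This equals $2s^3\int\varphi a_0w^2$ only after adding the cubic pieces $-s^3c^2\int\varphi_t\varphi_x^2w^2$ from the two $w_x$-terms.

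\textbf{Half-line passage.} This step does not go through as written. Corollary~\ref{cor:global_halfline_carleman} uses $v\in C([0,T];H^2(0,\infty))$ to bound $v(R_0,t)$ uniformly. Membership $z\in\mathcal X_s^m$ gives only $\rho_s$-weighted control, so that argument does not apply. The paper lets $\mathcal B_s(r)$ be the integral on the right of \eqref{eq:F_bound} with $e^{2s\varphi}$ replaced by $\omega_s^2$. It uses $\omega_s\leq\omega_{s,0}$ and $\varphi(x,t)\leq(1+cT)^{-2\lambda}\varphi(x,0)$ to get $\int_0^\infty\mathcal B_s(r)\,dr\leq C_s\|z\|_{\mathcal X_s^1}^2$. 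It then passes to the limit along radii $R_j\to\infty$ with $\mathcal B_s(R_j)\to0$. Alternatively, $\log\rho_s$ has bounded derivatives, so $\sqrt{\rho_s}\,z\in H^m(Q)$ and its traces on $x=R_0$ decay.

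\textbf{Smaller points.} You also do not need smooth approximants that preserve the trace conditions. Approximate in $H^2(Q_{R_0})$, pass to the limit in \eqref{eq:sum_J_complex} with all fluxes, and only then impose \eqref{eq:qi_homogeneous_data}. Finally, $2\chi'z_t+\chi''z$ produces no extra powers of $(-s)$ or $\varphi$. There is nothing to absorb by shrinking $\gamma$, which would in any case change the stated inequality.
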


\begin{proof}

{
On each finite cylinder, \(z\in H^m(Q_{R_0})\), since \(\rho_s\) is bounded above and below there.
The full identity \eqref{eq:sum_J_complex}, including its boundary fluxes, extends to \(v=\chi z\) by smooth approximation in \(H^2(Q_{R_0})\) and continuity of the traces of \(v,v_t,v_x\).
We impose the homogeneous trace conditions only after this limit, so the approximating functions need not preserve those conditions.
Since \(v\) vanishes near \(T\), the terminal flux is zero, while the lateral flux at \(x=0\) is zero by \eqref{eq:qi_homogeneous_data}.

To justify the half-line passage in the weighted space, set
\begin{equation*}
    \mathcal B_s(r)
    =\int_0^T\omega_s(r,t)^2
    \left[(-s)^3\varphi(r,t)^3|v(r,t)|^2
    +(-s)\varphi(r,t)\bigl(|v_t(r,t)|^2+|v_x(r,t)|^2\bigr)\right]\dd t.
\end{equation*}
The bounds \(\omega_s(x,t)\leq\omega_{s,0}(x)\) and
\(1\leq\varphi(x,t)/\varphi(x,0)\leq(1+cT)^{-2\lambda}\) imply
\begin{equation*}
    \int_0^\infty\mathcal B_s(r)\dd r
    \leq C_s\|z\|_{\mathcal X_s^1}^2<\infty.
\end{equation*}
Hence there are radii \(R_j\to\infty\) with \(\mathcal B_s(R_j)\to0\), and Lemma \ref{lem:F_bound} gives \(e^{-2s}|F(R_j)|\leq C\mathcal B_s(R_j)\to0\).
The bulk and residual integrals converge along these radii; the initial flux converges absolutely by the weighted trace theorem, since its coefficients after conjugation are bounded by \(C_s\rho_s\).
The bulk coercivity and lower-order absorption are those in the proof of Theorem \ref{thm:global_carleman}.
Thus, keeping the time flux at \(t=0\) and changing constants, we obtain
}
\begin{equation*}
    E_s[\chi z]\leq C\|\omega_s\square(\chi z)\|_{L^2(Q)}^2+Ce^{-2s}G(0),
\end{equation*}
where \(G(0)\) is the time flux \eqref{eq:GinsumJ} for the conjugated function \(w=e^{s\varphi}\chi z\).

Because \(\chi=1\) near \(t=0\), this flux is the initial flux of \(z\).  Put \(p=z(\cdot,0)\).  From \(z_t(\cdot,0)=0\),
\begin{equation*}
    w=e^{s\varphi(x,0)}p,\qquad
    w_t=s\varphi_t(x,0)e^{s\varphi(x,0)}p,\qquad
    w_x=e^{s\varphi(x,0)}(p_x+s\varphi_x(x,0)p)
\end{equation*}
at \(t=0\).  Substitution into \eqref{eq:GinsumJ} cancels the mixed \(pp_x\) terms, while the two cubic \((-s)^3p^2\) contributions 
{arising from the $(\partial_t w)^2$ term and the final $w^2$ term in \eqref{eq:GinsumJ}}
add to the term $2s^2\varphi(x,0)a_0(x)$ in \(\mathcal G_s\) defined in \eqref{eq:qi_endpoint_functionals}, with $a_0(x)$ defined in \eqref{eq:qi_mr}.  Thus
\begin{align*}
    e^{-2s}G(0)
    &=\mathcal G_s[p]-s^2I_{s,\infty}[p]  \\
    &=\mathcal G_s[p]-s^2I_{s,X}[p]
      -s^2\int_X^\infty\omega_{s,0}^2a_0(x)|p(x)|^2\dd x,
\end{align*}
where $I_{s,X}$ is defined in \eqref{eq:qi_endpoint_functionals}. The last term of the above equation is nonpositive; {hence, it can be omitted when deriving the upper bound for the left-hand side of \eqref{eq:qi_FDCE}.}

It remains to estimate the cutoff terms.  Since
\begin{equation*}
    \square(\chi z)=\chi\square z+2\chi' z_t+\chi'' z,
\end{equation*}
we have
\begin{equation*}
    \|\omega_s\square(\chi z)\|_{L^2(Q)}^2
    \leq C\|\omega_s\chi\square z\|_{L^2(Q)}^2
      +C\int_{\supp\chi'}\omega_s^2(|z_t|^2+|z|^2)\dd x\dd t .
\end{equation*}
On \(\supp\chi'\), \eqref{eq:qi_gammaX} gives
\(\omega_s(x,t)^2\leq e^{2s\gamma}\omega_{s,0}(x)^2\).  Therefore
\begin{equation*}
    \int_{\supp\chi'}\omega_s^2(|z|^2+|z_t|^2)\dd x\dd t
    \leq C_Xe^{2s\gamma}\|z\|_{\mathcal X_s^1}^2 .
\end{equation*}
Moving \(s^2I_{s,X}[p]\) to the left yields \eqref{eq:qi_FDCE}.
{The argument therefore applies to every \(z \in \mathcal X_s^m\) satisfying the homogeneous trace conditions.}
\end{proof}

\subsection{Graph-stabilized frozen nonlinear reconstruction}\label{subsec:graph_nonlinear_recon}\hfill

Fix \(m\ge4\).  We assume that an exact solution \(u^\dagger\in\mathcal X_s^m\) of \eqref{eq:qi_model} exists with the prescribed exact traces.  This includes the usual Sobolev regularity and compatibility conditions.  Define
\begin{equation}\label{eq:qi_AX}
\mathbb A=
\{v\in\mathcal X_s^m:\ v(0,t)=h(t),\ v_x(0,t)=g(t),\ v_t(x,0)=\varphi_1(x)\}.
\end{equation}
The corresponding homogeneous space is
\begin{equation}\label{eq:qi_A0X}
\mathbb A^0=
\{z\in\mathcal X_s^m:\ z(0,t)=z_x(0,t)=0,\ z_t(x,0)=0\}.
\end{equation}
Then \(u^\dagger\in\mathbb A\), \(\mathbb A=u^\dagger+\mathbb A^0\), and the trace continuity above makes \(\mathbb A^0\) closed in \(\mathcal X_s^m\); hence \(\mathbb A\) is a closed affine space.  Every element of \(\mathbb A^0\) satisfies \eqref{eq:qi_homogeneous_data}.

Let \(\varepsilon>0\) and \(\kappa>0\) be fixed.  The graph norm is
\begin{equation}\label{eq:qi_D_graph}
    \|z\|_{G_s}^2
    =E_s[\chi z]+s^2I_{s,X}[z(\cdot,0)]
    +\varepsilon\|z\|_{\mathcal X_s^m}^2
    +\kappa\mathcal G_s[z(\cdot,0)] .
\end{equation}
On \(\mathbb A^0\), this norm is equivalent to \(\|\cdot\|_{\mathcal X_s^m}\), by fixed-\(s\) weight comparability, the trace continuity above, and the \(\varepsilon\)-term.
We choose $\varepsilon$ large enough {so that the cutoff transition term
$C_X e^{2s\gamma}\|z\|_{\mathcal X_s^1}^2$ in \eqref{eq:qi_FDCE}
is absorbed by the $\varepsilon\|z\|_{\mathcal X_s^m}^2$ term:}
\begin{equation}\label{eq:qi_eps_rule}
    \varepsilon\geq C_X e^{2s\gamma}.
\end{equation}
For a frozen state \(U\in\mathbb A\), define
\begin{align}\label{eq:qi_J_graph}
    J_s^U(v)
    &=\|\omega_s\chi(\square v+\mathcal N(x,t,U,U_t,U_x)-f)\|_{L^2(Q)}^2 +\varepsilon\|v\|_{\mathcal X_s^m}^2
    +\kappa\mathcal G_s[v(\cdot,0)],
    \qquad v\in\mathbb A,
\end{align}
and set
\begin{equation}\label{eq:qi_Phi_graph}
    \Phi_s(U)=\operatorname*{argmin}_{v\in\mathbb A}J_s^U(v).
\end{equation}
In this subsection, \(\mathcal N(U)\) abbreviates \(\mathcal N(x,t,U,U_t,U_x)\).  

We prove that \eqref{eq:qi_Phi_graph} is well defined.  Choose \(v_0\in\mathbb A\), write \(v=v_0+z\) with \(z\in\mathbb A^0\), and define
\begin{align*}
a_s(z,\eta)
&=(\omega_s\chi\square z,\omega_s\chi\square\eta)_{L^2(Q)}
+\varepsilon(z,\eta)_{\mathcal X_s^m}
+\kappa\mathcal G_s[z(\cdot,0),\eta(\cdot,0)],\\
\ell_U(\eta)
&=-(\omega_s\chi(\square v_0+\mathcal N(U)-f),
      \omega_s\chi\square\eta)_{L^2(Q)}\\
&\quad-\varepsilon(v_0,\eta)_{\mathcal X_s^m}
-\kappa\mathcal G_s[v_0(\cdot,0),\eta(\cdot,0)].
\end{align*}
The trace theorem and \eqref{eq:qi_weighted_source_assumption} make \(a_s\) and \(\ell_U\) continuous on \(\mathbb A^0\), while
\[
a_s(z,z)\ge\varepsilon\|z\|_{\mathcal X_s^m}^2.
\]
The Lax--Milgram theorem therefore gives a unique \(z_U\in\mathbb A^0\) satisfying \(a_s(z_U,\eta)=\ell_U(\eta)\) for all \(\eta\in\mathbb A^0\).  Expanding \(J_s^U(v_0+z)\) shows that \(v_0+z_U\) is its unique minimizer.   
Thus \(\Phi_s\) is well defined.

\begin{thm}[Finite-depth graph contraction]\label{thm:qi_graph_contraction}
Fix \(\kappa>0\) and \(\theta\in(0,1)\).  There exists \(s_\theta<0\) such that, for every \(s\leq s_\theta\), one can choose \(\varepsilon=\varepsilon(s,\theta)>0\) for which \eqref{eq:qi_eps_rule} holds and
\begin{equation}\label{eq:qi_contraction_graph}
    \|\Phi_s(U)-\Phi_s(V)\|_{G_s}
    \le \theta\|U-V\|_{G_s},
    \qquad U,V\in\mathbb A .
\end{equation}
Thus \(\Phi_s\) is a contraction on \(\mathbb A\) with respect to the metric induced by \(\|\cdot\|_{G_s}\).
\end{thm}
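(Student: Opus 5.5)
Set $v=\Phi_s(U)$, $\tilde v=\Phi_s(V)$, $z=v-\tilde v\in\mathbb A^0$, and $\delta=U-V\in\mathbb A^0$.

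\textbf{Step 1: orthogonality relation.} Subtracting the two Euler--Lagrange equations $a_s(\cdot,\eta)=\ell_U(\eta)$ and $a_s(\cdot,\eta)=\ell_V(\eta)$ gives
\[
(\omega_s\chi\square z,\omega_s\chi\square\eta)_{L^2(Q)}+\varepsilon(z,\eta)_{\mathcal X_s^m}+\kappa\mathcal G_s[z(\cdot,0),\eta(\cdot,0)]
=-(\omega_s\chi(\mathcal N(U)-\mathcal N(V)),\omega_s\chi\square\eta)_{L^2(Q)}
\]
for all $\eta\in\mathbb A^0$. Testing with $\eta=z$ and applying Cauchy--Schwarz, the square term on the left absorbs the right-hand side. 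This yields
\[
\tfrac12\|\omega_s\chi\square z\|^2+\varepsilon\|z\|_{\mathcal X_s^m}^2+\kappa\mathcal G_s[z(\cdot,0)]\le\tfrac12\|\omega_s\chi(\mathcal N(U)-\mathcal N(V))\|^2 .
\]

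\textbf{Step 2: control of the graph norm of $z$.} Proposition~\ref{thm:qi_finite_depth_CE} applies to $z$. By \eqref{eq:qi_eps_rule}, its cutoff term $C_Xe^{2s\gamma}\|z\|_{\mathcal X_s^1}^2$ is bounded by $\varepsilon\|z\|_{\mathcal X_s^m}^2$. Adding the $\varepsilon$ and $\kappa$ terms to both sides gives
\[
\|z\|_{G_s}^2\le C(1+C_X+\kappa^{-1}C_X)\bigl(\|\omega_s\chi\square z\|^2+\varepsilon\|z\|_{\mathcal X_s^m}^2+\kappa\mathcal G_s[z(\cdot,0)]\bigr)\le C_{X,\kappa}\|\omega_s\chi(\mathcal N(U)-\mathcal N(V))\|^2 .
\]
Here $C_{X,\kappa}$ is independent of $s$. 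I should double-check that the constant $C_X$ in Proposition~\ref{thm:qi_finite_depth_CE} does not depend on $s$; it comes from Lemmas~\ref{lem:F_bound}--\ref{lem:bulk_coercivity}, whose constants are uniform for $s\le -S_0$.

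\textbf{Step 3: the key gain in $s$.} By \eqref{eq:qi_lipschitz},
\[
\|\omega_s\chi(\mathcal N(U)-\mathcal N(V))\|^2\le 3\int_Q\omega_s^2\chi^2(K_0^2|\delta|^2+K_1^2|\delta_t|^2+K_2^2|\delta_x|^2).
\]
Since $\chi\le1$, $\chi^2\le\chi$ is not needed: we just bound this by $\int\omega_s^2(\cdots)$ with $\chi$ present. We compare it with $E_s[\chi\delta]$. The zeroth-order part is bounded by $(-s)^{-3}E_s[\chi\delta]$, using $\varphi\ge1$. The gradient part satisfies $\chi\delta_t=(\chi\delta)_t-\chi'\delta$, so it is bounded by $(-s)^{-1}E_s[\chi\delta]$ plus a $\chi'$ term. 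That $\chi'$ term lives on $\supp\chi'$, so by \eqref{eq:qi_gammaX} it is at most $Ce^{2s\gamma}\|\delta\|_{\mathcal X_s^1}^2\le C\varepsilon\|\delta\|_{\mathcal X_s^m}^2$ (choosing $\varepsilon$ with \eqref{eq:qi_eps_rule} as an equality up to constants). Altogether,
\[
\|z\|_{G_s}^2\le C_{X,\kappa}K^2\bigl((-s)^{-1}+\text{small}\bigr)\|\delta\|_{G_s}^2 .
\]

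\textbf{Main obstacle.} The $\chi'$ contribution costs $C\varepsilon\|\delta\|_{\mathcal X_s^m}^2$ with a constant that is not small, while $\varepsilon\|\delta\|_{\mathcal X_s^m}^2$ is only part of the graph norm. To get a contraction, I would pick $\varepsilon=C_Xe^{2s\gamma}$ exactly. The $\chi'$ term then carries a factor $e^{2s\gamma}$ before comparison, and I would re-express it as $(e^{2s\gamma}/\varepsilon)\cdot\varepsilon\|\delta\|^2$. This only gives $O(1)$, so instead I would weaken the rule to $\varepsilon=C_Xe^{s\gamma}$. That still satisfies \eqref{eq:qi_eps_rule}, and the $\chi'$ term becomes $\le Ce^{s\gamma}\,\varepsilon\|\delta\|_{\mathcal X_s^m}^2$. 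The total factor is then $C(( -s)^{-1}+e^{s\gamma})$, which falls below $\theta^2$ for $s\le s_\theta$, proving \eqref{eq:qi_contraction_graph}.
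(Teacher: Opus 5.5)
Your proposal is correct and follows essentially the same route as the paper. You subtract the Euler equations and test with $z$, obtain graph control from Proposition~\ref{thm:qi_finite_depth_CE} together with \eqref{eq:qi_eps_rule}, and split the Lipschitz bound into an $E_s[\chi(U-V)]$ part carrying the gains $(-s)^{-1},(-s)^{-3}$ and a $\supp\chi'$ part of size $e^{2s\gamma}/\varepsilon$. Your parameter choice differs only cosmetically: the paper takes $\varepsilon$ large at fixed $s$ so that $e^{2s\gamma}/\varepsilon$ is small, while you reach the explicit choice $\varepsilon=C_Xe^{s\gamma}$ (after correctly rejecting $\varepsilon=C_Xe^{2s\gamma}$), which gives $e^{2s\gamma}/\varepsilon=e^{s\gamma}/C_X\to0$.
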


\begin{proof}
The minimizers are well defined, \(\Phi_s(U)-\Phi_s(V)\in\mathbb A^0\), and the Euler equation below is obtained by differentiating \(J_s^U\) with respect to \(v\) along directions \(\eta\in\mathbb A^0\):
\begin{equation*}
\begin{aligned}
&
\left(
\omega_s\chi\bigl(\square\Phi_s(U)+\mathcal N(U)-f\bigr),
\,
\omega_s\chi\square\eta
\right)_{L^2(Q)}
\\
&\qquad
+\varepsilon
\bigl(\Phi_s(U),\eta\bigr)_{\mathcal X_s^m}
+\kappa\,
\mathcal G_s\!\bigl(\Phi_s(U)(\cdot,0),\eta(\cdot,0)\bigr)
=0,
\qquad
\forall\,\eta\in\mathbb A^0.
\end{aligned}
\end{equation*}
We first record the graph control estimate.
If \(z\in\mathbb A^0\), then \eqref{eq:qi_FDCE} and the definition of \(\|\cdot\|_{G_s}\) give
\begin{align*}
    \|z\|_{G_s}^2
    &\le C_X\left(
    \|\omega_s\chi\square z\|_{L^2(Q)}^2
    +\mathcal G_s[z(\cdot,0)]
    +e^{2s\gamma}\|z\|_{\mathcal X_s^1}^2
    \right)  +\varepsilon\|z\|_{\mathcal X_s^m}^2
    +\kappa\mathcal G_s[z(\cdot,0)] .
\end{align*}
Since {\(m \geq 4\)}, \eqref{eq:qi_eps_rule} absorbs the transition term into the \(\varepsilon\)-term.  The full endpoint form is controlled by \(\kappa\mathcal G_s[z(\cdot,0)]\), with a constant depending on the fixed number \(\kappa\).  Hence
\begin{equation}\label{eq:qi_graph_control}
    \|z\|_{G_s}^2
    \le C_{X,\kappa}\left(
    \|\omega_s\chi\square z\|_{L^2(Q)}^2
    +\varepsilon\|z\|_{\mathcal X_s^m}^2
    +\kappa\mathcal G_s[z(\cdot,0)]
    \right),
    \qquad z\in\mathbb A^0 .
\end{equation}

Let \(z=\Phi_s(U)-\Phi_s(V)\) and \(e=U-V\).  Subtract the Euler equations for the two quadratic minimization problems and test by \(z\).  This gives
\begin{align}\label{eq:qi_euler_diff_graph}
&\|\omega_s\chi\square z\|_{L^2(Q)}^2
 +\varepsilon\|z\|_{\mathcal X_s^m}^2
 +\kappa\mathcal G_s[z(\cdot,0)] \le \|\omega_s\chi(\mathcal N(U)-\mathcal N(V))\|_{L^2(Q)}^2.
\end{align}
By \eqref{eq:qi_lipschitz}, the right-hand side is bounded by
\begin{align*}
C\int_Q\omega_s^2\chi^2
\bigl(K_0^2|e|^2+K_1^2|e_t|^2+K_2^2|e_x|^2\bigr)\dd x\dd t .
\end{align*}
On \(Q\setminus\operatorname{supp}\chi'\), we have \((\chi e)_t=\chi e_t\) and \((\chi e)_x=\chi e_x\). Hence, the bulk term \(E_s[\chi e]\), with \(E_s\) defined in \eqref{eq:qi_bulk_energy}, controls the \(\chi^2\)-weighted zero-order and first-order terms in the above integral over this region, with factors \((-s)^{-3}\) and \((-s)^{-1}\), respectively.
On \(\supp\chi'\), the weight separation \eqref{eq:qi_gammaX} gives
\begin{align*}
\int_{\supp\chi'}\omega_s^2
\bigl(|e|^2+|e_t|^2+|e_x|^2\bigr)\dd x\dd t
\leq C_Xe^{2s\gamma}\|e\|_{\mathcal X_s^1}^2
\leq C_X\frac{e^{2s\gamma}}{\varepsilon}\|e\|_{G_s}^2 .
\end{align*}
Consequently,
\begin{align*}
\|\omega_s\chi(\mathcal N(U)-\mathcal N(V))\|_{L^2(Q)}^2
\leq C\left[
\frac{K_1^2+K_2^2}{-s}
+\frac{K_0^2}{(-s)^3}
+(K_0^2+K_1^2+K_2^2)\frac{e^{2s\gamma}}{\varepsilon}
\right]\|e\|_{G_s}^2 .
\end{align*}
Combining this with \eqref{eq:qi_graph_control}, we obtain
\begin{align}
    \|\Phi_s(U)-\Phi_s(V)\|_{G_s}^2
    &\leq \Theta_{s,\varepsilon}\|U-V\|_{G_s}^2,\notag\\
    \Theta_{s,\varepsilon}
    &\leq C_{X,\kappa}\left[
    \frac{K_1^2+K_2^2}{-s}
    +\frac{K_0^2}{(-s)^3}
    +(K_0^2+K_1^2+K_2^2)\frac{e^{2s\gamma}}{\varepsilon}
    \right].\label{eq:qi_Theta}
\end{align}
Choose \(s_\theta<0\) so negative that the contribution of the first two terms on the right-hand side is at most \(\theta^2/2\) for every \(s\leq s_\theta\).  For such an \(s\), choose \(\varepsilon=\varepsilon(s,\theta)>0\) large enough so that \eqref{eq:qi_eps_rule} holds and the last term also contributes at most \(\theta^2/2\).  Then \(\Theta_{s,\varepsilon}\leq\theta^2\), which proves \eqref{eq:qi_contraction_graph}.
\end{proof}

\subsection{Noisy data and the final stability estimate}\hfill

Let the noisy data be \(f^\delta,h^\delta,g^\delta,\varphi_1^\delta\), and let \(\mathbb A^\delta\) be the affine class obtained from \(\mathbb A\) defined in \eqref{eq:qi_AX} by replacing \((h,g,\varphi_1)\) with \((h^\delta,g^\delta,\varphi_1^\delta)\), i.e., {
\[
\mathbb A^\delta
=
\left\{
v\in \mathcal X_s^m:
v(0,t)=h^\delta(t),\quad
v_x(0,t)=g^\delta(t),\quad
v_t(x,0)=\varphi_1^\delta(x)
\right\}.
\]
We only consider noisy data for which
\(\mathbb A^\delta\neq\emptyset\)
and
\(\omega_s\chi f^\delta\in L^2(Q)\).
In Theorem~\ref{thm:qi_noisy_graph_error}, the nonemptiness of
\(\mathbb A^\delta\) is already guaranteed by the lifted noise assumption
\eqref{eq:qi_noise_model}.} For \(U\in\mathbb A^\delta\), define
\begin{equation}\label{eq:qi_Phi_graph_delta}
    \Phi_s^\delta(U)=\operatorname*{argmin}_{v\in\mathbb A^\delta}
    \left\{
    \|\omega_s\chi(\square v+\mathcal N(U)-f^\delta)\|_{L^2(Q)}^2
    +\varepsilon\|v\|_{\mathcal X_s^m}^2
    +\kappa\mathcal G_s[v(\cdot,0)]
    \right\}.
\end{equation}
For each \(U\in\mathbb A^\delta\), the minimizer in \eqref{eq:qi_Phi_graph_delta} exists and is unique by the same direct-method argument {{mentioned in Section \ref{subsec:graph_nonlinear_recon}  for establishing the well-posedness of $\Phi_s$}}.  If \(v^\delta\in\mathbb A^\delta\), then \(\mathbb A^\delta=v^\delta+\mathbb A^0\); hence the preceding contraction proof {of Theorem \ref{thm:qi_graph_contraction} applies to $\Phi_s^\delta$}, since differences of two elements of \(\mathbb A^\delta\) satisfy \eqref{eq:qi_homogeneous_data}.

The noise assumption is expressed directly with the noise level \(\delta\).  Let \(u^\dagger\in\mathbb A\) be the exact solution and \(p^\dagger=u^\dagger(\cdot,0)\).  We say that the noisy data have compatible lifted noise level \(\delta\) if there exists \(\widetilde u^\delta\in\mathbb A^\delta\) such that
\begin{equation}\label{eq:qi_noise_model}
    \|\omega_s\chi(\square\widetilde u^\delta+\mathcal N(\widetilde u^\delta)-f^\delta)\|_{L^2(Q)}
    +\|\widetilde u^\delta-u^\dagger\|_{G_s}
    \leq\delta .
\end{equation}
This is a conditional lifted noise model.  It does not assert that arbitrary raw boundary errors of size \(\delta\) automatically satisfy \eqref{eq:qi_noise_model}; such a statement would require a separate stable trace-lifting theorem.

\begin{thm}[Regularized finite-depth stability estimate]\label{thm:qi_noisy_graph_error}
Fix \(\kappa>0\) and \(\theta\in(0,1)\).  Choose \(s\leq s_\theta\) and \(\varepsilon=\varepsilon(s,\theta)>0\) as in Theorem \ref{thm:qi_graph_contraction}, so that the frozen map {\(\Phi_s^{\delta}\)} is a \(\theta\)-contraction.  Assume \(\|u^\dagger\|_{\mathcal X_s^m}\le M\).  Then there are constants \(C_s>0\) and \(\delta_s>0\), independent of \(n\) and \(\delta\), such that the following holds.

If \(0<\delta\le\delta_s\), \eqref{eq:qi_noise_model} holds, and \(U_0^\delta\in\mathbb A^\delta\), then the iterates
\[
    U_{n+1}^\delta=\Phi_s^\delta(U_n^\delta)
\]
satisfy, for every \(n\ge0\),
\begin{equation}\label{eq:qi_iterate_error_graph}
    \|U_{n+1}^\delta-u^\dagger\|_{G_s}^2
    \le
    2\theta^{2(n+1)}\|U_0^\delta-u^\dagger\|_{G_s}^2
    +C_s\left(\delta^2+\varepsilon M^2+\kappa\mathcal G_s[p^\dagger]\right).
\end{equation}
Consequently, with \(p_{n+1}^\delta=U_{n+1}^\delta(\cdot,0)\),
\begin{equation}\label{eq:qi_noisy_trace_graph}
    I_{s,X}[p_{n+1}^\delta-p^\dagger]
    \le
    C_s\theta^{2(n+1)}\|U_0^\delta-u^\dagger\|_{G_s}^2
    +C_s\left(\delta^2+\varepsilon M^2+\kappa\mathcal G_s[p^\dagger]\right).
\end{equation}
Here \(U_{n+1}^\delta\) is exactly the minimizer in \eqref{eq:qi_Phi_graph_delta} with frozen state \(U_n^\delta\).
\end{thm}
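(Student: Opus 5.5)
The approach is a fixed-point comparison. Introduce the exact frozen fixed point of the noisy map and split the error as
\[
U_{n+1}^\delta-u^\dagger=(U_{n+1}^\delta-u^\delta_\ast)+(u^\delta_\ast-u^\dagger),
\]
where \(u^\delta_\ast\in\mathbb A^\delta\) is the unique fixed point of \(\Phi_s^\delta\). It exists by Theorem~\ref{thm:qi_graph_contraction}, applied to \(\Phi_s^\delta\) on the closed affine space \(\mathbb A^\delta=\widetilde u^\delta+\mathbb A^0\). Completeness holds because \(\|\cdot\|_{G_s}\) is equivalent to \(\|\cdot\|_{\mathcal X_s^m}\) on \(\mathbb A^0\). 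The contraction then gives \(\|U_{n+1}^\delta-u_\ast^\delta\|_{G_s}\le\theta^{n+1}\|U_0^\delta-u_\ast^\delta\|_{G_s}\). Using \((a+b)^2\le2a^2+2b^2\) and the triangle inequality \(\|U_0^\delta-u_\ast^\delta\|\le\|U_0^\delta-u^\dagger\|+\|u^\dagger-u_\ast^\delta\|\), we get the \(2\theta^{2(n+1)}\|U_0^\delta-u^\dagger\|_{G_s}^2\) term. What remains is a multiple of \(\|u_\ast^\delta-u^\dagger\|_{G_s}^2\), with \(\theta\)-dependent constants that are absorbed into \(C_s\).

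To bound \(\|u^\delta_\ast-u^\dagger\|_{G_s}\), I would first compare \(u^\delta_\ast\) with the lifted state \(\widetilde u^\delta\), and then use \(\|\widetilde u^\delta-u^\dagger\|_{G_s}\le\delta\) from \eqref{eq:qi_noise_model}. Set \(z=u^\delta_\ast-\widetilde u^\delta\in\mathbb A^0\). The Euler equation for \(u_\ast^\delta=\Phi^\delta_s(u^\delta_\ast)\), tested with \(\eta=z\), reads
\[
(\omega_s\chi(\square u_\ast^\delta+\mathcal N(u_\ast^\delta)-f^\delta),\omega_s\chi\square z)+\varepsilon(u_\ast^\delta,z)_{\mathcal X_s^m}+\kappa\mathcal G_s(u_\ast^\delta(\cdot,0),z(\cdot,0))=0.
\]
Write \(\square u_\ast^\delta=\square z+\square\widetilde u^\delta\) and split \(\mathcal N(u^\delta_\ast)=\mathcal N(\widetilde u^\delta)+[\mathcal N(u_\ast^\delta)-\mathcal N(\widetilde u^\delta)]\). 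Do the same for the \(\varepsilon\)- and \(\kappa\)-terms, with \(u_\ast^\delta=z+\widetilde u^\delta\). This gives
\[
\|\omega_s\chi\square z\|^2+\varepsilon\|z\|^2_{\mathcal X^m_s}+\kappa\mathcal G_s[z(\cdot,0)]
\]
on the left. On the right it gives cross terms with the residual of \(\widetilde u^\delta\), which is at most \(\delta\) by \eqref{eq:qi_noise_model}, with the nonlinear difference, and with \(\varepsilon(\widetilde u^\delta,z)\) and \(\kappa\mathcal G_s(\widetilde u^\delta(\cdot,0),z(\cdot,0))\). Cauchy--Schwarz and Young, using that \(\mathcal G_s\) is a nonnegative quadratic form, absorb half of the left side. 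The nonlinear difference is bounded exactly as in the proof of Theorem~\ref{thm:qi_graph_contraction} by \(\Theta_{s,\varepsilon}\|z\|_{G_s}^2\le\theta^2\|z\|_{G_s}^2\). Combined with the graph control \eqref{eq:qi_graph_control}, this yields
\[
(1-C\theta^2)\|z\|_{G_s}^2\lesssim\delta^2+\varepsilon\|\widetilde u^\delta\|^2_{\mathcal X_s^m}+\kappa\mathcal G_s[\widetilde u^\delta(\cdot,0)].
\]

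The main obstacle is this absorption: the constant \(C_{X,\kappa}\) in \eqref{eq:qi_graph_control} multiplies \(\theta^2\). I would handle it by using the smallness built into the choice of \(s_\theta,\varepsilon\), which makes \(C_{X,\kappa}\Theta_{s,\varepsilon}\le\theta^2<1\) in the same form as \eqref{eq:qi_Theta}, so that \(1-\theta^2>0\) and the resulting factor is absorbed into \(C_s\). Next, replace \(\widetilde u^\delta\) by \(u^\dagger\). The equivalence of \(\|\cdot\|_{G_s}\) with \(\|\cdot\|_{\mathcal X_s^m}\) and the trace continuity of \(\mathcal G_s\) at fixed \(s\) give
\[
\varepsilon\|\widetilde u^\delta\|^2\le2\varepsilon M^2+C_s\delta^2,\qquad \kappa\mathcal G_s[\widetilde u^\delta(\cdot,0)]\le2\kappa\mathcal G_s[p^\dagger]+C_s\delta^2.
\]
The condition \(\delta\le\delta_s\) is used only to keep these fixed-\(s\) constants uniform. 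This proves \eqref{eq:qi_iterate_error_graph}.

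Finally, \eqref{eq:qi_noisy_trace_graph} follows from the definition \eqref{eq:qi_D_graph}. Since \(s^2I_{s,X}[z(\cdot,0)]\le\|z\|_{G_s}^2\) and \(s^2\ge1\), we have \(I_{s,X}[p^\delta_{n+1}-p^\dagger]\le\|U_{n+1}^\delta-u^\dagger\|_{G_s}^2\), and \eqref{eq:qi_iterate_error_graph} then gives the claim after enlarging \(C_s\).
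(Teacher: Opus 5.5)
Your skeleton matches the paper's: the fixed point $u_s^\delta$ of $\Phi_s^\delta$, the bound $\|U_{n+1}^\delta-u_s^\delta\|_{G_s}\le\theta^{n+1}\|U_0^\delta-u_s^\delta\|_{G_s}$, the triangle inequality through the lifting $\widetilde u^\delta$, and the trace bound from $s^2I_{s,X}\le\|\cdot\|_{G_s}^2$. Where you differ is in how you bound $\|u_s^\delta-\widetilde u^\delta\|_{G_s}$.

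\textbf{The paper's route.} It never returns to the Euler equation or the nonlinearity. With $w=\widetilde u^\delta$, it takes a single frozen step and estimates $\Phi_s^\delta(w)-w\in\mathbb A^0$ by the graph control plus minimality against the competitor $w$. The functional in \eqref{eq:qi_Phi_graph_delta} at its minimizer is at most $\|\omega_s\chi(\square w+\mathcal N(w)-f^\delta)\|^2+\varepsilon\|w\|_{\mathcal X_s^m}^2+\kappa\mathcal G_s[w(\cdot,0)]$. This is a purely linear estimate, since the frozen state is $w$ on both sides. It then uses the contraction as a black box: $\|u_s^\delta-w\|_{G_s}\le\|\Phi_s^\delta(u_s^\delta)-\Phi_s^\delta(w)\|_{G_s}+\|\Phi_s^\delta(w)-w\|_{G_s}\le\theta\|u_s^\delta-w\|_{G_s}+\cdots$. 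So $\theta$ enters exactly, through $(1-\theta)^{-1}$.

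\textbf{Your route.} You re-derive an inhomogeneous version of the contraction inequality directly at the fixed point. This is more hands-on, but it forces you to redo the nonlinear absorption, and that is where your constants need repair.

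\textbf{The absorption step as written fails for $\theta$ near $1$.} Let $n:=\omega_s\chi(\mathcal N(u_s^\delta)-\mathcal N(\widetilde u^\delta))$. Absorbing ``half of the left side'' by Young puts a factor $2$ in front of $\|n\|^2$. After the graph control this becomes $2\Theta_{s,\varepsilon}\le 2\theta^2$, and $1-2\theta^2\le 0$ once $\theta\ge 1/\sqrt2$. The choice of $s_\theta,\varepsilon$ only guarantees $\Theta_{s,\varepsilon}\le\theta^2$. Here $\Theta_{s,\varepsilon}$ is precisely the composite of the graph-control constant and the Lipschitz bound, so there is no further smallness to spend. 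Your claim that the prefactor is $1-\theta^2$ therefore does not follow from the Young splitting you describe.

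\textbf{The fix.} Apply Cauchy--Schwarz to the whole nonnegative form, as in the derivation of \eqref{eq:qi_euler_diff_graph}. Set $\mathcal L(z):=\|\omega_s\chi\square z\|^2+\varepsilon\|z\|_{\mathcal X_s^m}^2+\kappa\mathcal G_s[z(\cdot,0)]$, and let $r$ be the weighted residual of $\widetilde u^\delta$. Then $\mathcal L(z)\le(\|r\|+\|n\|)^2+\varepsilon\|\widetilde u^\delta\|_{\mathcal X_s^m}^2+\kappa\mathcal G_s[\widetilde u^\delta(\cdot,0)]$, which is at most $(1+\eta)\|n\|^2+(1+\eta^{-1})\delta^2+\cdots$. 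Choosing $\eta>0$ with $(1+\eta)\theta^2<1$ closes the absorption, and the factor $(1-(1+\eta)\theta^2)^{-1}$ goes into $C_s$.

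\textbf{A smaller point.} You do not need norm equivalence to pass from $\widetilde u^\delta$ to $u^\dagger$. That equivalence was only asserted on $\mathbb A^0$, and $\widetilde u^\delta-u^\dagger\notin\mathbb A^0$ in general. By \eqref{eq:qi_D_graph} directly, $\varepsilon\|\widetilde u^\delta-u^\dagger\|_{\mathcal X_s^m}^2+\kappa\mathcal G_s[\widetilde u^\delta(\cdot,0)-p^\dagger]\le\|\widetilde u^\delta-u^\dagger\|_{G_s}^2\le\delta^2$. With these two adjustments your argument proves the theorem; the paper's version is simply shorter.
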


\begin{proof}
By the contraction property and the Banach fixed point theorem, \(\Phi_s^\delta\) has a {unique} fixed point; denote it by \(u_s^\delta\).  Let \(w=\widetilde u^\delta\) be the lifting in \eqref{eq:qi_noise_model}.  Applying the graph control estimate to \(\Phi_s^\delta(w)-w\), and using the minimality of \(\Phi_s^\delta(w)\), gives
\[
    \|\Phi_s^\delta(w)-w\|_{G_s}
    \le C_s\left(\delta+\sqrt\varepsilon M+\sqrt\kappa\,\mathcal G_s[p^\dagger]^{1/2}\right).
\]
{Combining the above inequality, the contraction estimate in \eqref{eq:qi_contraction_graph}, and the triangle inequality} gives
\[
    \|u_s^\delta-w\|_{G_s}
    \le \theta\|u_s^\delta-w\|_{G_s}
       +C_s\left(\delta+\sqrt\varepsilon M+\sqrt\kappa\,\mathcal G_s[p^\dagger]^{1/2}\right),
\]
and hence the same bound for \(\|u_s^\delta-u^\dagger\|_{G_s}\), after adding \(\|w-u^\dagger\|_{G_s}\le\delta\).  Finally,
\[
    \|U_{n+1}^\delta-u_s^\delta\|_{G_s}
    \le \theta^{n+1}\|U_0^\delta-u_s^\delta\|_{G_s},
\]
and the triangle inequality yields \eqref{eq:qi_iterate_error_graph}.  The trace estimate follows from \(s^2I_{s,X}[z(\cdot,0)]\le\|z\|_{G_s}^2\), with \(s\) fixed.
\end{proof}

\begin{rem}[Equivalence on the recovery interval]\label{rem:qi_IsX_L2_equiv}
For fixed \(s<0\) and \(X<\infty\), the weight \(\omega_{s,0}^2a_0\) is continuous and strictly positive on \([0,X]\).  Hence \(I_{s,X}\) is equivalent to the \(L^2(0,X)\)-norm squared, and \eqref{eq:qi_noisy_trace_graph} gives an \(L^2(0,X)\) finite-depth error estimate with constants depending on \(s\) and \(X\).
\end{rem}

\begin{rem}[Role of the Carleman estimate]\label{rem:qi_role_carleman}
For the one-dimensional linear wave equation, finite-depth uniqueness itself is elementary from characteristics.  The Carleman estimate is used here for a different purpose: it gives the weighted graph control that makes the frozen semilinear quasi-reversibility map contractive.  The result is therefore a regularized semilinear reconstruction statement, rather than a new uniqueness theorem for the linear one-dimensional wave equation.
\end{rem}


%% file: contents/60_numerical_experiments.tex
\section{Numerical verification of the finite-depth reconstruction}
\label{sec:numerics_qi}

This section gives a reproducible finite-difference verification of the finite-depth one-sided reconstruction developed in Section~\ref{sec:qi_extension}.  The computation is carried out on a finite box only because the numerical grid must be finite.  The inverse data remain one-sided throughout: no value at the artificial endpoint \(x=L\) is imposed or used as data.  The tests check manufactured-data consistency, mesh refinement, contraction of the frozen nonlinear map, robustness to white noise, and comparison with two natural quasi-reversibility baselines.

\subsection{{Algorithm and setups}}\hfill

{We compute the finite-depth one-sided inverse problem with finite difference methods on the truncated rectangle \([0,L]\times[0,T]\).
The spatial and temporal domains are discretized uniformly by introducing grid points
\[
x_i=i\Delta x,\quad i=0,\dots,N_x,
\qquad\text{and}\qquad
t_j=j\Delta t,\quad j=0,\dots,N_t,
\]
with mesh sizes
\(\Delta x={L}/{N_x}\) and \(\Delta t={T}/{N_t}\). For quantities restricted to the recovery interval \((0,X)\), we denote by \(N_X\) the last spatial index such that \(x_{N_X}\le X\). The notation \(f_{i,j}\) denotes \(f(x_i,t_j)\) for a function \(f\).

For a frozen iterate \(U\), the discrete unknown \(V\) is obtained from the sparse least-squares problem
\begin{equation}\label{eq:num_discrete_qr}
\begin{aligned}
    J_h^U(V)
    &=\sum_{i,j}\Delta x\Delta t\,\omega_s(x_i,t_j)^2\chi_X(t_j)^2
    \left|L_hV_{i,j}+N_h[U]_{i,j}-f_{i,j}^\delta\right|^2\\
    &\quad +\mathcal B_h^\delta(V)+\varepsilon\|V\|_{\mathcal X_h^2}^2
    +\kappa(-s)\sum_i\Delta x\,\omega_{s,0}(x_i)^2
    {c^2}\varphi_t(x_i,0)|D_xV_{i,0}|^2\\
    &\quad +\kappa(-s)\sum_i\Delta x\,\omega_{s,0}(x_i)^2
    \left(r_0(x_i)+2s^2\varphi(x_i,0)a_0(x_i)\right)|V_{i,0}|^2,
\end{aligned}
\end{equation}
where \(L_h\) is the centered second-order discretization of \(\partial_t^2-{c^2}\partial_x^2\).
The cutoff \(\chi_X\) is the discrete analogue of \eqref{eq:qi_chi}.
The mismatch with the lateral data and initial velocity is penalized through the quadratic term
\begin{equation*}
    \begin{aligned}
        \mathcal B_h^\delta(V)
        &=\sigma\sum_{j=0}^{N_t}\Delta t
        \left[|V_{0,j}-h^\delta(t_j)|^2
        +|(D_xV)_{0,j}-g^\delta(t_j)|^2\right]\\
        &\quad+\sigma\sum_{i=0}^{N_x}\Delta x
        |(D_tV)_{i,0}-\varphi_1^\delta(x_i)|^2,
    \end{aligned}
\end{equation*}
where $\sigma> 0$ is a penalty parameter, and the second-order one-sided differences are
\begin{equation*}
    (D_xV)_{0,j}=\frac{-3V_{0,j}+4V_{1,j}-V_{2,j}}{2\Delta x},
    \qquad
    (D_tV)_{i,0}=\frac{-3V_{i,0}+4V_{i,1}-V_{i,2}}{2\Delta t}.
\end{equation*}
These penalties approximate the exact trace constraints of the continuous problem.
No condition is imposed at \(x=L\).
Accordingly, we define the discrete frozen nonlinear map by
\begin{equation*}
    \Phi_{s,h}(U)
    :=
    \operatorname*{argmin}_{V\in\mathbb V_h} J_h^U(V),
\end{equation*}
where \(\mathbb V_h=\mathbb R^{(N_x+1)\times(N_t+1)}\) is the space of all grid functions.

At each step, the current iterate \(U^n\) is frozen in the nonlinear term, and the next iterate \(U^{n+1}\) is obtained by \emph{one frozen nonlinear Picard correction}, namely by minimizing the corresponding discrete functional \(J_h^{U^n}\):
\begin{equation*}
    U^{n+1}=\Phi_{s,h}(U^n)=\operatorname*{argmin}_V J_h^{U^n}(V).
\end{equation*}
We define the relative discrete graph increment as follows, with $\norm{\cdot}_{G_h}$ the discrete version of $\norm{\cdot}_{G_s}$ in \eqref{eq:qi_D_graph}:
\[
    \mathrm{RGI}_n:=\frac{\|U^{n+1}-U^n\|_{G_h}}{\|U^n\|_{G_h}}.
    \]
The iteration is then repeated until the relative discrete graph increment falls below the prescribed tolerance. This process is summarized in Algorithm \ref{alg:graph_fixed_pt_iter}.


\begin{algorithm}[!htb]
\caption{Graph-stabilized frozen nonlinear fixed-point iteration}
\label{alg:graph_fixed_pt_iter}
\KwData{Uniform grid \((x_i,t_j)\) on \([0,L]\times[0,T]\); noisy data \(f^\delta,h^\delta,g^\delta,\varphi_1^\delta\); parameters \(s<0\), \(\varepsilon>0\), \(\kappa>0\); stopping tolerance \(\tau_0\).}
\KwResult{The reconstructed discrete solution \(U^n\).}

Initialize \(U^0\) by solving the linear quasi-reversibility problem with the nonlinear term $\mathcal{N}$ set to zero \;
Set \(n \gets 0\)\;

\Repeat{\(\mathrm{RGI}_{n-1}<\tau_0 \)}{
    Compute
    \[
    U^{n+1}=\operatorname*{argmin}_{V} J_h^{U^n}(V),
    \]
    where \(J_h^{U^n}\) is the frozen Carleman-weighted least-squares functional in \eqref{eq:num_discrete_qr}\;
    Compute the discrete graph increment $\mathrm{RGI}_n$\;
    
    Set \(n \gets n+1\) \;
}

\Return{The final iterate \(U^n\) as the reconstructed discrete solution; the initial trace \(U^n(\cdot,0)\) reconstructs \(p(x)=u(x,0)\).}
\end{algorithm}
The proposed method will be compared with two baselines.  The first removes the endpoint graph term by setting \(\kappa=0\) in \eqref{eq:num_discrete_qr}}, while keeping the Carleman weight. The second replaces the weight \(\omega_s\) defined in \eqref{eq:qi_weight} by one and also removes the endpoint graph term, giving an unweighted quasi-reversibility (unweighted QR) scheme. To gauge the reconstruction error, we define the following relative initial-trace errors
\begin{equation*}
    \mathrm{err}_{L^2}
    =\frac{\|{U^n}(\cdot,0)-p^\dagger\|_{L^2(0,X)}}
    {\|p^\dagger\|_{L^2(0,X)}},
    \qquad
    \mathrm{err}_{w}
    =\frac{I_{s,X}[{U^n}(\cdot,0)-p^\dagger]^{1/2}}
    {I_{s,X}[p^\dagger]^{1/2}} .
\end{equation*}

\subsection{{Numerical example and results}}\hfill

{We set the following parameter values regarding the semilinear wave model \eqref{eq:qi_model}, the truncated computational domain, the finite reconstruction depth, and the Carleman weights:
}
\begin{equation*}
    c=1,\qquad T=1.5,\qquad L=1.45,\qquad X=1,
    \qquad \lambda=-\frac14.
\end{equation*}
We set the penalty parameter in the term $\mathcal{B}_h^\delta$ in \eqref{eq:num_discrete_qr} to $\sigma=10^4$. The nonlinearity is chosen to match the global Lipschitz hypothesis in Section~\ref{sec:qi_extension}:
\begin{equation*}
    \mathcal N(u)=\mu\sin u,\qquad \mu=1.
\end{equation*}
The manufactured solution is
\begin{equation}\label{eq:num_exact_solution}
    u^\dagger(x,t)=p^\dagger(x)\cos(\nu t)+\frac{q(x)}{\nu}\sin(\nu t),
    \qquad \nu=0.85,
\end{equation}
where
\begin{align*}
    p^\dagger(x)
    &=0.80e^{-0.35x}+0.12e^{-0.45x}\sin(4\pi x+0.10)
      +0.08e^{-0.45x}\cos(6\pi x)\\
    &\quad +0.05e^{-35(x-0.72)^2}\sin(7\pi x),\\
    q(x)
    &=0.20e^{-0.30x}\cos(2.2\pi x+0.20)
      +0.04e^{-30(x-0.65)^2}\sin(6\pi x+0.30).
\end{align*}
The source is sampled from the analytic identity
\begin{equation*}
    f=u^\dagger_{tt}-{c^2}u^\dagger_{xx}+\mu\sin(u^\dagger),
\end{equation*}
and the measured traces are
\begin{equation*}
    h(t)=u^\dagger(0,t),
    \qquad
    g(t)=u^\dagger_x(0,t),
    \qquad
    \varphi_1(x)=u^\dagger_t(x,0)=q(x).
\end{equation*}
We choose \(N_x=50\) and \(N_t=60\), so \(\Delta x=2.90\cdot10^{-2}\) and \(\Delta t=2.50\cdot10^{-2}\).  Relative white noise of size \(\delta\) is added to \(h\), \(g\), and \(\varphi_1\), and relative white noise of size \(0.5\delta\) is added to \(f\).  All noise tests use the fixed seed \(20260708\).  Values on \((X,L]\) enter only as grid unknowns in the finite-dimensional least-squares problem; they are not right-endpoint measurements.
 Unless a parameter is varied, we use
\begin{equation*}
    \varepsilon=3\cdot10^{-10},
    \qquad
    \kappa=3\cdot10^{-5},
    \qquad
    s=-10 .
\end{equation*}
The smaller value of \(\kappa\) accounts for the cubic-order mass term {$(-s)\cdot 2s^2 \varphi(x,0) a_0$, which appears in the definition \eqref{eq:qi_endpoint_functionals} of \(\mathcal G_s\) and its discrete counterpart \eqref{eq:num_discrete_qr}.
The stopping tolerance in Algorithm \ref{alg:graph_fixed_pt_iter} is chosen as $\tau_0=5\cdot10^{-5}$. }

\begin{table}[t]
\centering
\small
\caption{Noise-free mesh refinement for the manufactured multiscale solution.  The mesh size is \(h=\max\{\Delta x,\Delta t\}\).  This is a consistency check; no asymptotic order is claimed because the regularization parameters are fixed.}
\label{tab:num_mesh}
\begin{tabular}{ccccc}
\hline
\(N_x\) & \(N_t\) & \(h\) & \(\mathrm{err}_{L^2}\) & \(\mathrm{err}_{w}\)\\
\hline
30 & 36 & \(4.833\cdot10^{-2}\) & \(8.10\cdot10^{-2}\) & \(3.20\cdot10^{-2}\)\\
40 & 48 & \(3.625\cdot10^{-2}\) & \(1.41\cdot10^{-2}\) & \(5.28\cdot10^{-3}\)\\
50 & 60 & \(2.900\cdot10^{-2}\) & \(5.44\cdot10^{-3}\) & \(3.29\cdot10^{-3}\)\\
\hline
\end{tabular}
\end{table}

\begin{table}[t]
\centering
\small
\caption{Relative graph increments for the frozen nonlinear Picard correction with \(0.2\%\) white noise.}
\label{tab:num_picard}
\begin{tabular}{c|ccccc}
\hline
\(s\) & iter. 1 & iter. 2 & iter. 3 & iter. 4 & iter. 5\\
\hline
\(-6\)  & \(2.04\cdot10^{-1}\) & \(2.25\cdot10^{-2}\) & \(1.51\cdot10^{-3}\) & \(7.16\cdot10^{-5}\) & \(2.40\cdot10^{-6}\)\\
\(-10\) & \(1.22\cdot10^{-1}\) & \(1.08\cdot10^{-2}\) & \(4.92\cdot10^{-4}\) & \(1.50\cdot10^{-5}\) & ---\\
\(-14\) & \(6.80\cdot10^{-2}\) & \(4.33\cdot10^{-3}\) & \(1.37\cdot10^{-4}\) & \(2.69\cdot10^{-6}\) & ---\\
\(-20\) & \(3.00\cdot10^{-2}\) & \(8.77\cdot10^{-4}\) & \(1.23\cdot10^{-5}\) & --- & ---\\
\hline
\end{tabular}
\end{table}

\begin{table}[t]
\centering
\small
\caption{Noise response for the reconstructed initial displacement measured by the relative \(L^2(0,X)\) error with \(s=-10\).  The three columns use the same noise realization and the same finite-difference grid.}
\label{tab:num_noise}
\begin{tabular}{c|ccc}
\hline
\(\delta\) & graph-stabilized & no graph & unweighted QR\\
\hline
0          & \(5.44\cdot10^{-3}\) & \(4.38\cdot10^{-3}\) & \(4.38\cdot10^{-3}\)\\
\(0.1\%\) & \(6.55\cdot10^{-3}\) & \(5.75\cdot10^{-3}\) & \(6.89\cdot10^{-2}\)\\
\(0.2\%\) & \(9.14\cdot10^{-3}\) & \(8.73\cdot10^{-3}\) & \(1.38\cdot10^{-1}\)\\
\(0.5\%\) & \(1.92\cdot10^{-2}\) & \(1.96\cdot10^{-2}\) & \(3.44\cdot10^{-1}\)\\
\(1.0\%\) & \(3.73\cdot10^{-2}\) & \(3.85\cdot10^{-2}\) & \(6.88\cdot10^{-1}\)\\
\(2.0\%\) & \(7.41\cdot10^{-2}\) & \(7.67\cdot10^{-2}\) & \(1.37\cdot10^{0}\)\\
3.0\% & $8.06\cdot 10^{-2}$ & $1.15\cdot 10^{-1}$ & $2.06\cdot 10^{0}$ \\
4.0\% & $1.07\cdot 10^{-1}$ & $1.53\cdot 10^{-1}$ & $2.75\cdot 10^{0}$ \\
\hline
\end{tabular}
\end{table}

Table~\ref{tab:num_mesh} shows that the noise-free reconstruction improves under mesh refinement, so the manufactured data are being solved consistently rather than fitted by an isolated grid.  Table~\ref{tab:num_picard} and the left panel of Figure~\ref{fig:num_contraction_flux} show the nonlinear correction increments:  increasing \(-s\) decreases the correction size, in agreement with the contraction mechanism in Theorem~\ref{thm:qi_graph_contraction} and with the explicit factor \(\Theta_{s,\varepsilon}\) in \eqref{eq:qi_Theta}.  Table~\ref{tab:num_noise} compares the Carleman-weighted schemes with and without graph stabilization, as well as the unweighted QR scheme. The table shows the expected bias--stability tradeoff of the corrected endpoint regularizer {(i.e., the graph-stabilized scheme with Carleman weights)}: it introduces a small error increase at the three lowest noise levels, but improves on the Carleman-weighted no-graph run from \(0.5\%\) noise onward.  Removing both the graph term and the Carleman weight (i.e., the unweighted QR scheme) gives much stronger noise amplification.

\begin{figure}[t]
\centering
\includegraphics[width=0.48\textwidth]{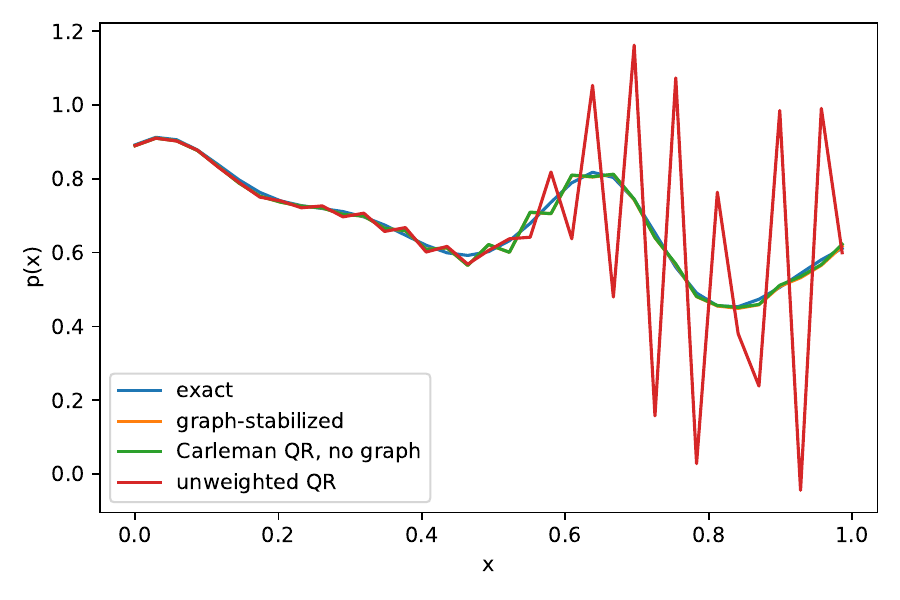}\hfill
\includegraphics[width=0.48\textwidth]{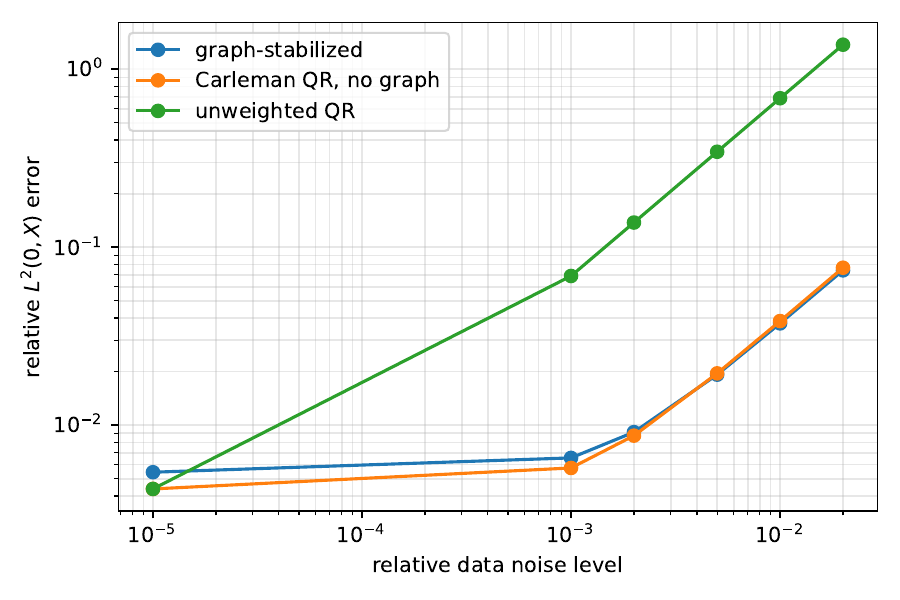}
\caption{Left: reconstructed initial displacement for \(0.5\%\) noisy data.  Right: relative \(L^2(0,X)\) error versus data noise.  The point at zero noise is plotted at \(10^{-5}\) only to display it on the logarithmic axis.}
\label{fig:num_reconstruction_noise}
\end{figure}

{To complement the quantitative comparison in Table~\ref{tab:num_noise}, Figure~\ref{fig:num_reconstruction_noise} gives a visual comparison of the reconstructed initial displacement. With \(0.5\%\) noisy data, the left panel shows that the graph-stabilized reconstruction closely tracks the true initial displacement on the reconstructed interval, which is consistent with Table~\ref{tab:num_noise}:} at the same noise level, the relative \(L^2(0,X)\) errors are \(1.92\cdot10^{-2}\), \(1.96\cdot10^{-2}\), and \(3.44\cdot10^{-1}\) for the graph-stabilized, no-graph, and unweighted methods, respectively.  {The right panel shows that the graph-stabilized method is slightly biased at very low noise levels, but becomes more accurate as the noise increases, with the crossover occurring near $0.5\%$ noise.}

\begin{figure}[t]
\centering
\includegraphics[width=0.48\textwidth]{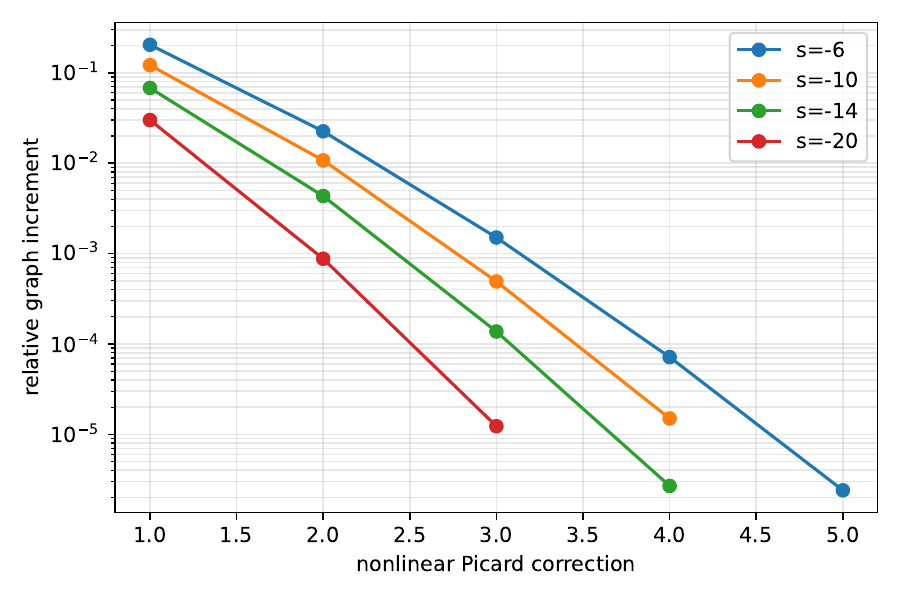}\hfill
\includegraphics[width=0.48\textwidth]{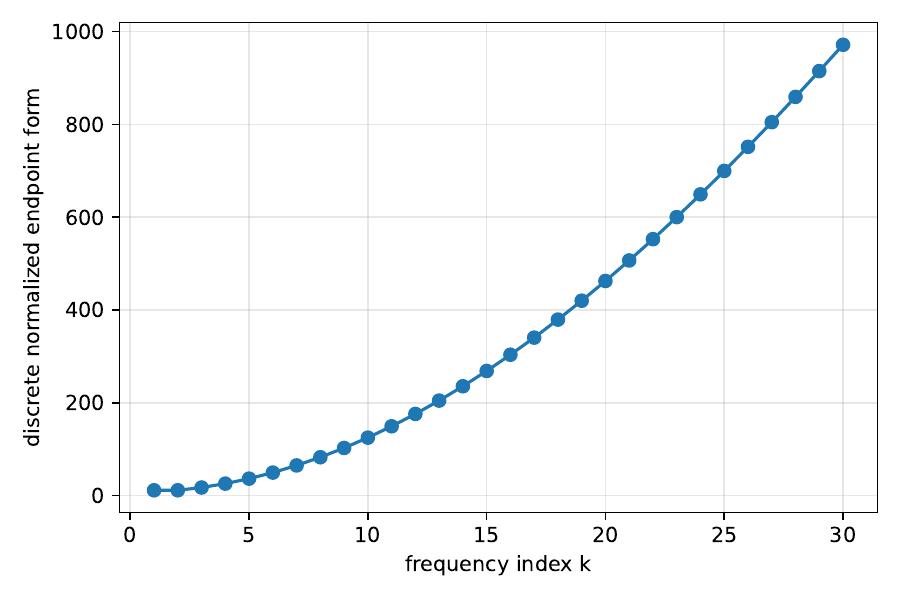}
\caption{Left: relative graph increments for the frozen nonlinear Picard correction versus iteration.  Right: normalized corrected endpoint form for \(p_k(x)=\zeta(x)\sin(k\pi x/X)\).  The log-log fitted slope is \(1.86\), reflecting the derivative contribution at high frequencies.}
\label{fig:num_contraction_flux}
\end{figure}
%
To better understand the role of the endpoint stabilizer $\mathcal{G}_s$ in the reconstruction scheme, we examine how the endpoint functional responds to initial traces with different levels of spatial oscillation. The right panel of Figure~\ref{fig:num_contraction_flux} examines how the endpoint stabilizer responds to increasingly oscillatory initial traces. To vary the spatial frequency while keeping the support fixed, we consider
\[
p_k(x)=\zeta(x)\sin(k\pi x/X),
\]
where \(\zeta\in C_c^\infty(0,X)\) is a smooth cutoff. Based on \eqref{eq:qi_endpoint_functionals} and $p_k$, we define the discrete corrected endpoint graph-flux value $G_k$ as follows:
\[
G_k = (-s)\sum_i \omega_s(x_i,0)^2\left({c^2}\varphi_t(x_i,0)\,|p_k'(x_i)|^2 + \bigl(r_0(x_i)+2s^2\varphi(x_i,0)a_0(x_i)\bigr)|p_k(x_i)|^2\right)\Delta x.\]
The computed endpoint form \(G_k\), evaluated using the normalized weight \eqref{eq:qi_weight}, increases from \(1.14\cdot10^1\) at \(k=1\) to \(9.72\cdot10^2\) at \(k=30\), showing that \(\mathcal G_s\) assigns a substantially higher cost to rapidly oscillating traces.
A power-law fit of the computed values, \(G_k\sim Ck^\gamma\), gives the growth exponent \(\gamma=1.86\), which is consistent with the quadratic \(k^2\)-scaling of the derivative contribution in \(\mathcal G_s[p_k]\), while the cubic-order component of the mass term in \(\mathcal G_s[p_k]\) is relatively more significant at low frequencies.
The right panel of Figure~\ref{fig:num_contraction_flux} illustrates the frequency-selective action of the endpoint stabilizer: it suppresses high-frequency components, which is beneficial when such components are generated by noise.

Together, these experiments are consistent with the analytical framework of Section~\ref{sec:qi_extension}: the implementation uses only the one-sided data, and the observed contraction improves as $-s$ increases, {in agreement with Theorem~\ref{thm:qi_graph_contraction}, specifically, \eqref{eq:qi_Theta}}. 
Furthermore, the corrected endpoint regularizer exhibits the tradeoff between low-noise bias and improved robustness at the larger tested noise levels. 
{It also highlights the stabilizing role of the endpoint regularizer in controlling high-frequency components through its derivative contribution.} It is evident that both Carleman-weighted methods (with or without graph stabilization) remain substantially more robust than the unweighted scheme for nonzero noise.

%% file: contents/70_conclusion.tex
\section{Conclusion and discussion}
We began with analytical screening conditions extracted from the weighted cross-term identity and used AI-assisted symbolic search to identify the structural ansatz
\begin{equation*}
    \psi(x,t)=-\log(1+ct)-\log(1+x+ct)
\end{equation*}
for the one-dimensional half-line wave geometry.
The search criteria serve as screening tests for the AI system; the proof rests entirely on human analysis and verification by the authors, including uniform first-order lower bounds, mixed-term control, spatial flux estimates, and the corrected zero-order lower bound
\begin{equation*}
    Z_s\geq C(-s)^3\varphi^3.
\end{equation*}
These estimates yield a global Carleman estimate for the wave operator on the half-line.
As an application, we derived a weighted conditional lateral Cauchy stability theorem with the residual and time-cutoff transition terms dictated by the finite observation interval.
The stability application complements existing Carleman approaches for hyperbolic stability and inverse problems \cite{BelYama-B17,Kli13review,BaudeE13,CriLS16}.
We also developed a finite-depth one-sided reconstruction scheme for semilinear wave equations on the half-line. {By retaining the initial-time flux in the weighted identity, we obtain an endpoint graph stabilizer that leads to a contractive frozen nonlinear quasi-reversibility map and a conditional finite-depth stability estimate.} This application respects the finite propagation depth imposed by the observation time, uses no right-endpoint data at the artificial depth \(X\), and handles the initial endpoint flux through a graph-stabilized quasi-reversibility functional.

{The numerical results are consistent with the analysis: the nonlinear increments decrease as the magnitude of the Carleman parameter increases, the Carleman-weighted schemes are markedly more robust than the unweighted baseline, and the endpoint regularizer suppresses highly oscillatory components.

Beyond the present problem, the search-and-certification strategy may provide a useful framework for exploring Carleman weights in more complicated geometries and settings, although its extension will require problem-specific identities and screening criteria.}

%% file: contents/80_acknowledgments.tex
\section*{Acknowledgments}

The authors declare no conflict of interest. Yu Wang was supported by the National Natural Science Foundation of China under grant 12401589 and by the Sichuan Science and Technology Program under grant 2026NSFSC0777.\\

\paragraph{\textbf{Declaration of generative AI and AI-assisted technology}} During the preparation of this manuscript, the authors used generative AI-based tools to edit author-written text, including language polishing, proofreading, and improvements to clarity and readability. Separately, the AI system described in the manuscript was used during the Carleman-weight discovery stage to propose symbolic candidate ansatzes under analytical screening criteria prescribed by the authors. All subsequent symbolic and analytical derivations and certification, mathematical proofs, algorithms, experimental designs, numerical tests, and conclusions were developed and validated by the authors. The authors reviewed and revised all AI-assisted content and take full responsibility for the entire manuscript.

%% file: contents/85_data_availability.tex
\section*{Data availability statement}

The code for the AI-assisted weight search described in Appendix~\ref{apd:ai_workflow} is made publicly available at \url{https://github.com/proofQED/QED_wave_Carleman}.
The repository also includes the problem statement, AI prompts, and records from all seven search rounds.
These records contain the candidate weights and parameters, verification reports, failed attempts, and run logs.

%% file: contents/90_ai_workflow.tex
\section{AI-assisted discovery workflow}\label{apd:ai_workflow}
This appendix documents how AI was used in the discovery step described in Section \ref{subsec:search_weight_case}.
The workflow separates three logically distinct components: author-derived screening criteria, AI-assisted ansatz generation, and independent human analytical certification by the authors.
The AI component was used to explore explicit forms of $\psi$; it neither derived the screening criteria nor verified the final weight.
The record below documents the scientific search path and its failure feedback rather than claiming exact reproduction of the model interaction.

\subsection{Mathematical input given to the AI}\label{subsec:ai_math_input}\hfill

Before the AI-assisted search, a small list of screening criteria has been derived analytically from a weighted identity.
The prompt asked for an explicit candidate, not a proof of the Carleman estimate.
It used the following search target, with possible simplifications such as removing the factor $4$ in the mixed-term condition:
find a smooth explicit function $\psi( x, t )$ and parameters $s$, $\lambda$, $\alpha$, and possibly $\beta$ such that
\begin{equation*}
    s < 0,
    \qquad
    \lim_{ x \to +\infty } \lambda \psi( x, t ) = +\infty,
\end{equation*}
\begin{equation*}
    L_{1}\psi < L\psi < - L_{1}\psi,
    \qquad
    L_{2}\psi > 0,
\end{equation*}
and
\begin{equation*}
    \left (
        ( L_{1}\psi )^{2}
        -
        ( L\psi )^{2}
    \right )
    \geq
    4 c^{2}
    ( \partial_{ x, t }^{ 2 }\varphi )^{2}.
\end{equation*}
These are the simplified screening tests corresponding to the criteria in Section \ref{sec:req_weights}.
They encode the decay at spatial infinity, positivity of the first-order coefficients, a qualitative zero-order positivity target, and control of the mixed derivative cross term.
The main proof uses the stronger coercive condition \eqref{cond:w2_coer}; after the AI produced the candidate weight, that stronger $Z_{ s }$ lower bound was verified analytically in Lemma \ref{lem:bulk_coercivity}.

For the initial automated search, the mixed condition was simplified by setting $\beta = 0$.
The archived repository also removes the factor $4$ on the right-hand side, problem statement, and verifier.
These simplifications were used only to screen candidates.
After a viable candidate was found, the original $\beta$-dependent inequality, including the factor $4$, was restored and verified analytically.

\subsection{Search loop and verification boundary}\hfill

The AI workflow used a separated search-and-check structure.
In each round, the AI received the cleaned mathematical requirements, previous failed candidates, and the latest verification feedback.
It then proposed a new symbolic ansatz for $\psi$.
The candidate was passed to a deterministic symbolic and numerical verifier, which computed $L\psi$, $L_{1}\psi$, $L_{2}\psi$, the asymptotic condition, and the mixed-term condition.
If the verifier rejected the candidate, the failure mode was recorded and supplied to the next round.
If a candidate passed the screening tests in the simplified search setting, it was isolated for author-led analysis.

Concretely, the candidate was found during seven search rounds in an early development stage of QED~\cite{an2026qed}.
Here a candidate means both a proposed ansatz for $\psi( x, t )$ and the parameter values used by the verifier.
Several rounds therefore test the same ansatz family with different parameters rather than a genuinely new functional form.
In particular, R2--R4 and R6 are repeated tests of the separated double-log ansatz, R5 is a transitional one-characteristic ansatz, and R7 is the first fully characteristic-aligned candidate.
The four ansatz forms and the round-by-round candidates are summarized below.
\begin{center}
    \begin{minipage}{0.50\linewidth}
        \centering
        \textbf{Ansatz forms}
        \begin{equation*}
            \begin{aligned}
                \psi_{\text{root-log}}
                &=
                - \sqrt{ 1 + x }
                -
                \log( 1 + t ),\\
                \psi_{\mathrm{separated}}
                &=
                - \log( 1 + t )
                -
                \log( 1 + x ),\\
                \psi_{\text{one-characteristic}}
                &=
                - \log( 1 + x + c t )
                -
                \log( 1 + x ),\\
                \psi_{\mathrm{characteristic}}
                &=
                - \log( 1 + x + c t )
                -
                \log( 1 + c t ).
            \end{aligned}
        \end{equation*}
    \end{minipage}
    \hfill
    \begin{minipage}{0.48\linewidth}
        \centering
        \textbf{Round-by-round candidates}
        \begin{equation*}
            \begin{array}{c|c|c}
                \text{round}
                & \text{ansatz}
                & ( \alpha, s, \lambda )\\
                \hline
                \mathrm{R1}
                & \psi_{\text{root-log}}
                & ( 1, -1, -1/10 )\\
                \mathrm{R2}
                & \psi_{\mathrm{separated}}
                & ( 1, -1, -1/20 )\\
                \mathrm{R3}
                & \psi_{\mathrm{separated}}
                & ( 1, -1, -1/50 )\\
                \mathrm{R4}
                & \psi_{\mathrm{separated}}
                & ( 1, -1/100, -1/20 )\\
                \mathrm{R5}
                & \psi_{\text{one-characteristic}}
                & ( 1, -1/100, -1/20 )\\
                \mathrm{R6}
                & \psi_{\mathrm{separated}}
                & ( 1, -1/100, -1/200 )\\
                \mathrm{R7}
                & \psi_{\mathrm{characteristic}}
                & ( 1, -1/100, -1/20 )
            \end{array}
        \end{equation*}
    \end{minipage}
\end{center}
The labels used in the early verification logs were shorthand:
N1a and N1b denoted the two first-order sign tests $L_{1}\psi < L\psi$ and $L\psi < - L_{1}\psi$ in \eqref{cond:wtwx};
N2 denoted the spatial escape condition $\lim_{ x \to + \infty } \lambda \psi( x, t ) = + \infty$ in \eqref{cond:aspt};
N3 denoted the qualitative zero-order positivity test $L_{2}\psi > 0$ in \eqref{cond:w2};
and S1 denoted a relaxed version of the mixed-term test in \eqref{cond:control_cross_beta}, as described in Appendix~\ref{subsec:ai_math_input}.
Figure \ref{fig:ai_rounds} summarizes the search path and the information carried from one round to the next.
\begin{figure}[htp]
    \centering
    \includegraphics[width=\linewidth]{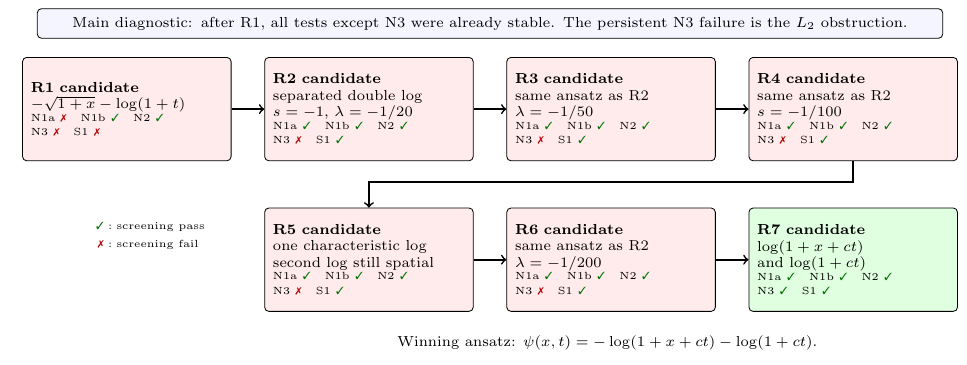}
    \caption{
        Seven search rounds from the early QED development stage.
        Each box shows the candidate family and the screening outcome for that round.
        Repeated $L_{2}$ failures for separated logarithms led to the characteristic-aligned logarithmic weight.
    }
    \label{fig:ai_rounds}
\end{figure}

The seven rounds constitute a discovery trace rather than a proof trace: they show how failed sign tests redirected the search from separated logarithms toward a characteristic-aligned ansatz. Acceptance of the selected ansatz in the mathematical argument required a separate chain of independent human analysis and certification, proceeding from quantitative verification to the Carleman theorem. 
This chain is carried out entirely by the authors and includes formula reductions, parameter-range analysis, spatial flux control, the corrected lower-bound arguments used in Section \ref{sec:CE}, and the justified passage to the half-line.

\subsection{Candidate found by AI and author-led generalization}\hfill

The useful AI-generated examples indicated that separated logarithmic weights had persistent sign obstructions, while a characteristic-aligned logarithmic structure behaved better.
This led to the candidate
\begin{equation*}
    \psi( x, t )
    =
    - \log( 1 + x + ct )
    -
    \log( 1 + ct ).
\end{equation*}
The AI search returned this structure with concrete sample parameters.
The author-led verification stage then kept the structural form of $\psi$ and replaced the sample parameter values with a parameter analysis.
This produced the admissible range recorded in \eqref{LMMresult_paracst}, including
\begin{equation*}
    \alpha = 1,
    \qquad
    s < 0,
    \qquad
    - \frac{ 1 }{ 3 } < \lambda < 0,
\end{equation*}
and the corresponding upper bound on $\beta$.
The derivation of these ranges and the verification of the required quantitative inequalities are given in Appendix \ref{secapd:para_ranges}.

\subsection{How the AI output enters the proof}\hfill

The AI output enters the paper only through the choice of ansatz.
Once \eqref{psi_can1} was identified, all proof steps were carried out from explicit formulas.
In particular, the quantitative verification uses:
\begin{enumerate}
    \item direct computations of derivatives of $\psi$ and $\varphi$,
    \item explicit reductions of $L\psi$, $L_{1}\psi$, and $L_{2}\psi$,
    \item analytic inequalities establishing the parameter ranges,
    \item uniform first-order lower bounds and mixed-term control,
    \item the corrected zero-order lower bound for \(Z_s\),
    \item derivative bounds for \(\varphi\), spatial flux control, and the justified half-line passage.
\end{enumerate}
Thus, the logical sequence is: the weighted cross-term identity for the conjugated wave operator, the screening criteria derived from it, the AI-assisted ansatz/candidate search, and independent analytical certification. Only the final certification provides the mathematical basis for the Carleman estimate and its applications.

%% file: contents/91_parameter_ranges.tex
\section{Parameter admissible ranges}\label{secapd:para_ranges}
\subsection{Coefficient reductions}\label{subsec:apd_computprep}\hfill

This appendix proves the parameter restrictions used in the main text.
The goal is to turn the conditions in Section \ref{sec:req_weights} into explicit inequalities in the parameters.
The reduction has three layers.
First, the logarithmic weight is reduced to two positive variables $A$ and $B$.
Second, all required Carleman coefficients are written as explicit rational expressions in $A$ and $B$.
Third, the sign conditions are reduced to one-dimensional inequalities in the ratio $R = B / A \geq 1$.

Let
\begin{equation*}
    A = 1 + c t,
    \qquad
    B = 1 + x + c t,
    \qquad R = B / A \geq 1.
\end{equation*}
The weight identified through the AI-assisted search has the form
\begin{equation*}
    \psi( x, t )
    =
    - \log( A )
    -
    \log( B ),
    \qquad
    \varphi
    =
    A^{- \lambda} B^{- \lambda}.
\end{equation*}
The first and second derivatives of $\psi$ are
\begin{equation*}
    \left \{
    \begin{aligned}
        \partial_{ t } \psi
        &= - c A^{-1} - c B^{-1},\\
        \partial_{ x } \psi
        &= - B^{-1},
    \end{aligned}
    \right .
\end{equation*}
and
\begin{equation*}
    \left \{
    \begin{aligned}
        \partial_{ t }^{ 2 } \psi
        &= c^{2} A^{-2} + c^{2} B^{-2},\\
        \partial_{ x, t }^{ 2 } \psi
        &= c B^{-2},\\
        \partial_{ x }^{ 2 } \psi
        &= B^{-2}.
    \end{aligned}
    \right .
\end{equation*}
Consequently,
\begin{equation}\label{sqpsi_tildesqpsi}
    \left \{
    \begin{aligned}
        \square \psi
        &= c^{2} A^{-2} > 0,\\
        \widetilde{\square}\psi
        &= c^{2} A^{-2}
        + 2 c^{2} A^{-1} B^{-1}
        > 0.
    \end{aligned}
    \right .
\end{equation}
The corresponding derivatives of $\varphi$ are
\begin{equation*}
    \left \{
    \begin{aligned}
        \partial_{ t } \varphi
        &= - c \lambda ( A^{-1} + B^{-1} ) \varphi,\\
        \partial_{ x } \varphi
        &= - \lambda B^{-1} \varphi,
    \end{aligned}
    \right .
\end{equation*}
and
\begin{equation*}
    \left \{
    \begin{aligned}
        \partial_{ t }^{ 2 } \varphi
        &=
        c^{2} \lambda ( A^{-2} + B^{-2} ) \varphi
        +
        c^{2} \lambda^{2} ( A^{-1} + B^{-1} )^{2} \varphi,\\
        \partial_{ x, t }^{ 2 } \varphi
        &=
        c \lambda^{2} A^{-1} B^{-1} \varphi
        +
        c \lambda ( \lambda + 1 ) B^{-2} \varphi,\\
        \partial_{ x }^{ 2 } \varphi
        &=
        \lambda ( \lambda + 1 ) B^{-2} \varphi.
    \end{aligned}
    \right .
\end{equation*}
These identities give the first-order coefficients
\begin{equation*}
    L_{1} \psi
    =
    c^{2} \lambda
    \left [
        ( \lambda + 1 ) A^{-2}
        + 2 \lambda A^{-1} B^{-1}
        + 2 ( \lambda + 1 ) B^{-2}
    \right ]
    \varphi
\end{equation*}
and
\begin{equation*}
    L \psi
    =
    c^{2} \lambda
    \Bigl [
        ( \lambda - \alpha + 1 ) A^{-2}
        + 2 \lambda A^{-1} B^{-1}
    \Bigr ] \varphi.
\end{equation*}
Hence the two inequalities in \eqref{cond:wtwx} are controlled by
\begin{equation*}
    L \psi - L_{1} \psi
    =
    - c^{2} \lambda
    \Bigl [
        \alpha A^{-2}
        + 2 ( \lambda + 1 ) B^{-2}
    \Bigr ] \varphi
\end{equation*}
and
\begin{equation*}
    - L_{1} \psi - L \psi
    =
    - c^{2} \lambda
    \Bigl [
        ( 2 \lambda + 2 - \alpha ) A^{-2}
        + 4 \lambda A^{-1} B^{-1}
        + 2 ( \lambda + 1 ) B^{-2}
    \Bigr ] \varphi.
\end{equation*}
The zero-order coefficient needed for \eqref{cond:w2} is
\begin{equation*}
    \begin{aligned}
        L_{2} \psi
        &=
        - \frac{ 1 }{ 2 } s c^{4} \lambda A^{-2}
        \Big [
            ( \lambda + 1 - \alpha ) ( \lambda + 2 ) ( \lambda + 3 ) A^{-2}
            + 2 \lambda ( 2 \lambda + 2 - \alpha ) ( \lambda + 2 ) A^{-1} B^{-1} \\
        &\qquad
            + 4 \lambda ( \lambda + 1 )^{2} B^{-2}
        \Big ] \varphi \\
        &\quad
        + s^{3} c^{4} \lambda^{3} A^{-2}
        \Big [
            ( 2 \lambda + 2 + \alpha ) A^{-2}
            + 2 ( 4 \lambda + 2 + \alpha ) A^{-1} B^{-1}
            + 4 ( 2 \lambda + 1 ) B^{-2}
        \Big ] \varphi^{3}.
    \end{aligned}
\end{equation*}
For the restored $\beta$-dependent mixed-term condition, the same substitution gives
\begin{equation}\label{beta_cond_lhs}
    \begin{aligned}
        &\quad \,
        ( 1 - \beta )^{2}
        \bigl ( ( L_{1} \psi )^{2} - ( L \psi )^{2} \bigr )
        -
        {4} c^{2} ( \partial_{ x, t }^{ 2 } \varphi )^{2}\\
        &=
        c^{4} \lambda^{2}
        \Big [
            ( 1 - \beta )^{2} \alpha ( 2 \lambda + 2 - \alpha ) A^{-4}
            + 4 ( 1 - \beta )^{2} \alpha \lambda A^{-3} B^{-1} \\
        &\qquad
            + \Bigl ( 4 ( 1 - \beta )^{2} ( \lambda + 1 )^{2} - 4\lambda^{2} \Bigr ) A^{-2} B^{-2} \\
        &\qquad
            + 8 \Bigl ( ( 1 - \beta )^{2} - 1 \Bigr ) \lambda ( \lambda + 1 ) A^{-1} B^{-3} \\
        &\qquad
            + 4 \Bigl ( ( 1 - \beta )^{2} - 1 \Bigr ) ( \lambda + 1 )^{2} B^{-4}
        \Big ]
        \varphi^{2}.
    \end{aligned}
\end{equation}
Finally, the coercive subtraction appearing in \eqref{cond:w2_coer} contains
\begin{equation*}
    s^{2} \lambda^{2} \varphi^{2} ( \square \psi )^{2}
    =
    c^{4} s^{2} \lambda^{2} A^{-4} \varphi^{2}.
\end{equation*}

\subsection{Admissible ranges for \texorpdfstring{$\lambda$}{lambda} and \texorpdfstring{$\alpha$}{alpha}}\hfill

Recall
\begin{equation*}
    A = 1 + c t,
    \qquad B = 1 + x + c t,
    \qquad R = B / A \geq 1.
\end{equation*}
\textbf{Initial parameter restrictions}:
We confine the parameters $\lambda$, $\alpha$ in the following range:
\begin{equation}\label{eqapd:initial_range}
    \lambda < 0,
    \quad
    0 < \alpha < 2.
\end{equation}
Under \eqref{eqapd:initial_range}, Condition \eqref{cond:aspt} is satisfied.
It remains to identify the part of this region where Conditions \eqref{cond:wtwx} and \eqref{cond:w2} also hold.

\paragraph{\textbf{First-order sign condition}.}
By Section \ref{subsec:apd_computprep}, Condition \eqref{cond:wtwx} is equivalent to the positivity of $L\psi - L_{1}\psi$ and $- L_{1}\psi - L\psi$.
After factoring out the positive terms $c^{2}B^{-2}\varphi$ and using $\lambda < 0$, the two requirements become
\begin{equation}\label{eq:C1_left}
    \begin{aligned}
        L \psi - L_{1} \psi
        &= - c^{2} \lambda B^{-2}
        \Bigl [
            \alpha R^{2}
            + 2 \lambda + 2
        \Bigr ] \varphi > 0 \\
        &\iff - \lambda ( \alpha R^{2} + 2 \lambda + 2 ) > 0
    \end{aligned}
\end{equation}
\begin{equation}\label{eq:C1_right}
    \begin{aligned}
        - L_{1} \psi - L \psi
        &= - c^{2} \lambda B^{-2}
        \Bigl [
            (2 \lambda + 2 - \alpha) R^{2}
            + 4 \lambda R
            + 2 \lambda + 2
        \Bigr ] \varphi > 0 \\
        &\iff
        - \lambda
        \left [
            ( 2 \lambda + 2 - \alpha ) R^{2}
            + 4 \lambda R
            + 2 \lambda + 2
        \right ]
        > 0.
    \end{aligned}
\end{equation}
For \eqref{eq:C1_left}, since $\alpha > 0$ and $R \geq 1$, the function
\begin{equation*}
    \alpha R^{2} + 2 \lambda + 2
\end{equation*}
is increasing with respect to $R$, so its minimum over $R \geq 1$ is attained at $R = 1$.
Hence $L\psi - L_{1}\psi$ in \eqref{eq:C1_left} is positive provided that
\begin{equation}\label{ineq:C1_left_direct}
    2 \lambda + 2 + \alpha > 0.
\end{equation}
This condition is weaker than the final admissible range obtained below and will not further restrict the parameters.

For \eqref{eq:C1_right}, define
\begin{equation*}
    g ( R ) := ( 2 \lambda + 2 - \alpha ) R^{2} + 4 \lambda R + 2 \lambda + 2.
\end{equation*}
We require $g ( R ) > 0$ for all $R \geq 1$.
First, we impose the leading-coefficient condition
\begin{equation}\label{ineq:LL1_1}
    2 \lambda + 2 - \alpha > 0.
\end{equation}
The requirement \eqref{ineq:LL1_1} implies that $g$ is an upward-opening quadratic, with the horizontal coordinate of the vertex
\begin{equation*}
    R_{*} = - \frac{2 \lambda}{2 \lambda + 2 - \alpha}.
\end{equation*}
There are two cases.

\textbf{Case (i).} If $R_{*} \leq 1$, equivalently
\begin{equation*}
    \lambda \geq \frac{\alpha - 2}{4},
\end{equation*}
then $g$ is increasing on $[ 1, \infty )$, and it suffices to check
\begin{equation*}
    g ( 1 ) = 8 \lambda + 4 - \alpha > 0.
\end{equation*}
This is automatically satisfied under $\lambda \geq ( \alpha - 2 ) / 4$.

\textbf{Case (ii).} If $R_{*} > 1$, equivalently
\begin{equation*}
    \lambda < \frac{\alpha - 2}{4},
\end{equation*}
then the minimum of $g$ on $[ 1, \infty )$ is attained at the vertex, so we require the discriminant to be negative to guarantee that $g$ is positive, i.e.
\begin{equation*}
    \Delta = ( 4 \lambda )^{2} - 4 ( 2 \lambda + 2 - \alpha ) ( 2 \lambda + 2 ) < 0.
\end{equation*}

A direct computation gives
\begin{equation*}
    \Delta = 8 \bigl( ( \alpha - 4 ) \lambda + \alpha - 2 \bigr),
\end{equation*}
hence
\begin{equation*}
    \lambda > \frac{\alpha - 2}{4 - \alpha}.
\end{equation*}

Combining Cases (i) and (ii), and using \eqref{eqapd:initial_range}, the feasible region $( \lambda, \alpha )$ induced by \eqref{cond:wtwx} is
\begin{equation}\label{ineq:lamalpha_region_LL1}
    \frac{\alpha - 2}{4 - \alpha} < \lambda < 0,
    \qquad 0 < \alpha < 2.
\end{equation}
This first-order feasible region is shown in Figure~\ref{fig:parameter_regions_merged}, panel (a).

In particular, when $\alpha = 1$, the range of $\lambda$ becomes
\begin{equation*}
    - \frac{1}{3} < \lambda < 0.
\end{equation*}

\paragraph{\textbf{Zero-order positivity condition based on \eqref{cond:w2}}.}

Using the coefficient reduction in Section \ref{subsec:apd_computprep}, we rewrite $L_{2} \psi$ as
\begin{equation}\label{eq:C2}
    L_{2} \psi
    =
    c^{4} A^{-2} B^{-2} \varphi
    \Big[
        - \frac{ 1 }{ 2 } s \lambda P ( R )
        +
        s^{3} \lambda^{3} \varphi^{2} Q ( R )
    \Big],
\end{equation}
where
\begin{equation}\label{def:pr}
    P ( R )
    :=
    ( \lambda + 1 - \alpha ) ( \lambda + 2 ) ( \lambda + 3 ) R^{2}
    +
    2 \lambda ( 2 \lambda + 2 - \alpha ) ( \lambda + 2 ) R
    +
    4 \lambda ( \lambda + 1 )^{2},
\end{equation}
\begin{equation*}
    Q ( R )
    :=
    ( 2 \lambda + 2 + \alpha ) R^{2}
    +
    2 ( 4 \lambda + 2 + \alpha ) R
    +
    4 ( 2 \lambda + 1 ).
\end{equation*}
Since $s < 0$, $\lambda < 0$, and $\varphi > 0$, it is sufficient for $L_{2}\psi > 0$ to require
\begin{equation*}
    P ( R ) \leq 0,
    \qquad
    Q ( R ) > 0,
    \qquad
    \forall R \geq 1.
\end{equation*}
The condition $Q ( R ) > 0$ is automatically satisfied under
\begin{equation*}
    \lambda > \frac{\alpha - 2}{4 - \alpha},
\end{equation*}
because the coefficients of the $R^{2}$, $R$, and constant terms in $Q( R )$ are then positive.
This is the same lower bound already required by \eqref{ineq:lamalpha_region_LL1}.

It remains to control $P$ from above.
The answer depends on whether $\alpha$ is below or above $1$.

\textbf{Case (i).}
If $0 < \alpha < 1$, then $P ( R ) \leq 0$ for all $R \geq 1$ holds if and only if
\begin{equation*}
    \lambda \leq \alpha - 1.
\end{equation*}
Combining with the above lower bound, this gives
\begin{equation}
    \frac{\alpha - 2}{4 - \alpha} < \lambda \leq \alpha - 1,
    \qquad
    2 - \sqrt{2} < \alpha < 1.
\end{equation}

\textbf{Case (ii).}
If $1 \leq \alpha < 2$, then $P ( R ) \leq 0$ for all $R \geq 1$ is automatic for every $\lambda < 0$, and therefore the admissible region is simply
\begin{equation*}
    \frac{\alpha - 2}{4 - \alpha} < \lambda < 0.
\end{equation*}
Hence
\begin{equation*}
    L_{2}\psi > 0
    \quad \text{for all } s < 0 \text{ and all } R \geq 1,
\end{equation*}
provided that $(\lambda, \alpha)$ are in the following feasible region
\begin{equation}\label{feasible_lam_alpha}
    \begin{cases}
        2 - \sqrt{2} < \alpha < 1,
        &
        \displaystyle \frac{\alpha - 2}{4 - \alpha} < \lambda \leq \alpha - 1,
        \\[1em]
        1 \leq \alpha < 2,
        &
        \displaystyle \frac{\alpha - 2}{4 - \alpha} < \lambda < 0.
    \end{cases}
\end{equation}
This refined feasible region is shown in Figure~\ref{fig:parameter_regions_merged}, panel (b).

\subsection{Admissible range for \texorpdfstring{$\beta$}{beta}}\hfill

We now restore the $\beta$-dependent mixed-term condition.
We impose the strict sufficient form
\begin{equation}\label{res:beta_cond}
    ( 1 - \beta )^{2} \bigl( ( L_{1} \psi )^{2} - ( L \psi )^{2} \bigr)
    -
    {4}c^{2} ( \partial_{x,t}^{2} \varphi )^{2}
    > 0.
\end{equation}
Any parameter range satisfying \eqref{res:beta_cond} automatically implies Condition~\eqref{cond:control_cross_beta}.
Let
\begin{equation*}
    q := ( 1 - \beta )^{2},
\end{equation*}
and recall
\begin{equation*}
    R := \frac{B}{A},
    \qquad
    R \geq 1,
    \qquad
    A = R^{-1}B.
\end{equation*}
Using \eqref{beta_cond_lhs}, the left-hand side of \eqref{res:beta_cond} can be rewritten as
\begin{equation*}
    c^{4} \lambda^{2} B^{-4} \varphi^{2}
    \Bigl[ q D( R ) - 4M( R ) \Bigr],
\end{equation*}
where
\begin{equation*}
    D( R )
    =
    \bigl( \alpha R^{2} + 2 \lambda + 2 \bigr)
    \bigl( ( 2 \lambda + 2 - \alpha ) R^{2} + 4 \lambda R + 2 \lambda + 2 \bigr),
\end{equation*}
and
\begin{equation*}
    M( R )
    =
    \bigl( \lambda R + \lambda + 1 \bigr)^{2}.
\end{equation*}
Since
\begin{equation*}
    c^{4} \lambda^{2} B^{-4} \varphi^{2} > 0,
\end{equation*}
the positivity condition is equivalent to
\begin{equation*}
    q D( R ) > 4 M( R )
    \qquad
    \text{for all } R \geq 1.
\end{equation*}
Therefore,
\begin{equation*}
    \frac{( 1 - \beta )^{2}}{4}
    >
    \sup_{ R \geq 1 }
    \frac{
        \bigl( \lambda R + \lambda + 1 \bigr)^{2}
    }{
        \bigl( \alpha R^{2} + 2 \lambda + 2 \bigr)
        \bigl( ( 2 \lambda + 2 - \alpha ) R^{2} + 4 \lambda R + 2 \lambda + 2 \bigr)
    }.
\end{equation*}
Define
\begin{equation*}
    \rho
    :=
    \sup_{ R \geq 1 }
    \frac{
        \bigl( \lambda R + \lambda + 1 \bigr)^{2}
    }{
        \bigl( \alpha R^{2} + 2 \lambda + 2 \bigr)
        \bigl( ( 2 \lambda + 2 - \alpha ) R^{2} + 4 \lambda R + 2 \lambda + 2 \bigr)
    }.
\end{equation*}
Then the admissible region for $\beta$ is
\begin{equation*}
    \frac{| 1 - \beta |}{2} > \sqrt{\rho},
\end{equation*}
that is,
\begin{equation}
    \beta \in
    \left( 0, 1 - 2\sqrt{\rho} \right)
    \cup
    \left( 1 + 2\sqrt{\rho}, \infty \right).
\end{equation}
Under the constraint $0<\beta<1$, the adopted branch is
\begin{equation}\label{beta_rho_relation}
    \beta \in
    \left( 0, 1 - 2\sqrt{\rho} \right).
\end{equation}
Define
\begin{equation*}
    f( R )
    :=
    \frac{
        ( \lambda R + \lambda + 1 )^{2}
    }{
        ( \alpha R^{2} + 2 \lambda + 2 )
        \bigl(
            ( 2 \lambda + 2 - \alpha ) R^{2}
            + 4 \lambda R
            + 2 \lambda + 2
        \bigr)
    }.
\end{equation*}

\paragraph{\textbf{Attainment of the supremum}.}
We prove that $f( R )$ attains its global maximum at $R = 1$, namely
\begin{equation}\label{supremum_f}
    f( 1 )
    =
    \sup_{ R \geq 1 } f( R ).
\end{equation}

To prove this, we start by letting $\mu := - \lambda$.
Then the feasible region \eqref{feasible_lam_alpha} can be rewritten as
\begin{equation*}
    2 - \sqrt{2} < \alpha < 2,
    \qquad
    0 < \mu < \frac{2 - \alpha}{4 - \alpha},
    \qquad
    \alpha + \mu \geq 1.
\end{equation*}
We first record the sign consequences used below.
Since $4 - \alpha > 2$ and $\mu < ( 2 - \alpha ) / ( 4 - \alpha )$,
\begin{equation*}
    \mu
    <
    \frac{2 - \alpha}{4 - \alpha}
    <
    \frac{1}{2},
    \qquad
    \mu
    <
    \frac{2 - \alpha}{2}.
\end{equation*}
Hence
\begin{equation}\label{alpha_mu_range1}
    1 - 2 \mu > 0,
    \qquad
    2 - \alpha - 2 \mu > 0,
    \qquad
    \alpha - 2 \mu + 2 > 0.
\end{equation}
Moreover,
\begin{equation}
    8 \mu
    <
    \frac{8 ( 2 - \alpha )}{4 - \alpha}
    <
    4 - \alpha,
\end{equation}
where the last inequality is equivalent to
$8 ( 2 - \alpha ) < ( 4 - \alpha )^{2}$, i.e. $0 < \alpha^{2}$, which naturally holds.
Thus
\begin{equation}\label{alpha_mu_range2}
    4 - \alpha - 8 \mu > 0.
\end{equation}
Let
\begin{equation*}
    E_{\alpha, \mu}( R )
    :=
    \alpha R^{2} - 2 \mu + 2,
    \qquad
    G_{\alpha, \mu}( R )
    :=
    ( 2 - 2 \mu - \alpha ) R^{2}
    - 4 \mu R
    + 2 - 2 \mu.
\end{equation*}
By \eqref{eq:C1_left}--\eqref{eq:C1_right},
\begin{equation}\label{EG_pos}
    E_{\alpha, \mu}( R ) > 0,
    \qquad
    G_{\alpha, \mu}( R ) > 0,
    \qquad
    R \geq 1.
\end{equation}
A direct subtraction gives
\begin{equation}\label{f1_minus_fR}
    \begin{aligned}
        f( 1 ) - f( R )
        &=
        \frac{
            ( R - 1 ) H_{\alpha, \mu}( R )
        }{
            ( \alpha - 2 \mu + 2 )( 4 - \alpha - 8 \mu )
            E_{\alpha, \mu}( R )
            G_{\alpha, \mu}( R )
        },
    \end{aligned}
\end{equation}
where
\begin{equation}\label{def:HR}
    \begin{aligned}
        H_{\alpha, \mu}( R )
        &=
        \alpha ( 1 - 2 \mu )^{2} ( 2 - \alpha - 2 \mu ) R^{3}
        -
        \alpha ( 1 - 2 \mu )^{2} ( \alpha + 6 \mu - 2 ) R^{2}
        \\
        &\quad
        -
        ( \mu - 1 )( 3 \mu - 1 ) C_{\alpha, \mu} R
        -
        ( \mu - 1 )^{2} C_{\alpha, \mu},
    \end{aligned}
\end{equation}
with
\begin{equation*}
    C_{\alpha, \mu}
    :=
    \alpha^{2}
    + 6 \alpha \mu
    - 2 \alpha
    + 8 \mu
    - 4.
\end{equation*}

\paragraph{\textbf{Positivity of $H$}.}
It remains to prove that the numerator factor is positive:
\begin{equation}\label{H_pos}
    H_{\alpha, \mu}( R ) > 0,
    \qquad
    R \geq 1.
\end{equation}
To begin, we define
\begin{equation}\label{cov_H}
    z := 2 - \alpha,
    \qquad
    p := 1 - 2 \mu,
    \qquad
    Y := R - 1.
\end{equation}
Then
\begin{equation}\label{zYh_range}
    0 < z < \sqrt{2},
    \qquad
    Y \geq 0,
    \qquad
    h( z ) := \frac{2 - z}{2 + z} < p.
\end{equation}
The last inequality is exactly the strict bound
$\mu < ( 2 - \alpha ) / ( 4 - \alpha )$.

Through the change of variables
$
( \alpha, \mu, R ) \mapsto ( z, p, Y )
$ via \eqref{cov_H},
the function $H_{\alpha, \mu}( R )$ defined in \eqref{def:HR} can be expressed as
\begin{equation*}
    \begin{aligned}
        H( z, p, Y )
        &=
        - p
        \bigl(
            7 p^{2} z
            - 18 p^{2}
            + 3 p z^{2}
            - 10 p z
            + 4 p
            + z^{2}
            - 5 z
            + 6
        \bigr)
        \\
        &\quad
        - \frac{Y}{4}
        \bigl(
            45 p^{3} z
            - 102 p^{3}
            + 23 p^{2} z^{2}
            - 85 p^{2} z
            + 70 p^{2}
            + 2 p z^{2}
            - 13 p z
            + 22 p
            - z^{2}
            + 5 z
            - 6
        \bigr)
        \\
        &\quad
        - 2 p^{2} ( z - 2 ) ( 3 p + 2 z - 3 ) Y^{2}
        - p^{2} ( z - 2 ) ( p + z - 1 ) Y^{3}.
    \end{aligned}
\end{equation*}
Although the admissible parameters satisfy $h( z ) < p$ by \eqref{zYh_range}, we first evaluate at the boundary value $p = h( z )$.
This gives
\begin{equation}\label{H_nonneg}
    H( z, h( z ), Y )
    =
    \frac{
        ( Y + 2 ) ( 2 - z )^{3} ( ( Y + 1 ) z - 2 )^{2}
    }{
        ( z + 2 )^{3}
    }
    \geq 0.
\end{equation}
The inequality holds because $Y + 2 > 0$, $2 \pm z > 0$ by \eqref{zYh_range}, and $(( Y + 1 )z - 2 )^{2} \geq 0$.
Next, compute $\partial_{p}H$ at $p = h( z )$:
\begin{equation}\label{partial_p_H}
    ( z + 2 )^{2} \partial_{p} H( z, h( z ), Y )
    =
    Q_{3} Y^{3}
    +
    Q_{2} Y^{2}
    +
    Q_{1} Y
    +
    Q_{0},
\end{equation}
where
\begin{equation*}
    Q_{3}
    =
    ( 2 - z )^{2} ( 2 z^{2} - z + 2 ),
    \qquad
    Q_{2}
    =
    2 ( 2 - z )^{2} ( 4 z^{2} - 7 z + 6 ),
\end{equation*}
\begin{equation*}
    Q_{1}
    =
    11 z^{4}
    - 75 z^{3}
    + 206 z^{2}
    - 280 z
    + 144,
    \qquad
    Q_{0}
    =
    ( 2 - z ) ( 80 - 72 z + 30 z^{2} - 5 z^{3} ).
\end{equation*}
Here $Q_{3} > 0$ because $2 z^{2} - z + 2 > 0$;
$Q_{2} > 0$ because $4 z^{2} - 7 z + 6 > 0$;
and $Q_{0} > 0$ because $80 - 72 z + 30 z^{2} - 5 z^{3}$ is decreasing on
$( 0, \sqrt{2} )$ and its value at $\sqrt{2}$ is
$140 - 82 \sqrt{2} > 0$.
For the possible mixed-sign term $Q_{1}Y$, we investigate the discriminant
\begin{equation*}
    \Delta( z )
    :=
    Q_{1}^{2}
    - 4 Q_{2} Q_{0}.
\end{equation*}
Introducing
\begin{equation*}
    y := \sqrt{2} - z.
\end{equation*}
Then $y > 0$ according to \eqref{zYh_range}.
A direct expansion gives
\begin{equation*}
    \begin{aligned}
        - \Delta( z )
        &=
        39 y^{8}
        + ( 550 - 312 \sqrt{2} ) y^{7}
        + ( 5611 - 3850 \sqrt{2} ) y^{6}
        \\
        &\quad
        + ( 35576 - 24930 \sqrt{2} ) y^{5}
        + ( 142862 - 100880 \sqrt{2} ) y^{4}
        \\
        &\quad
        + ( 371192 - 262344 \sqrt{2} ) y^{3}
        + ( 608740 - 429736 \sqrt{2} ) y^{2}
        \\
        &\quad
        + ( 577280 - 407400 \sqrt{2} ) y
        + 244152 - 172576 \sqrt{2}.
    \end{aligned}
\end{equation*}
All coefficients on the right-hand side of the above equation are positive, and $y > 0$.
Thus $\Delta( z ) < 0$.
Since $Q_{2} > 0$, this implies
\begin{equation*}
    Q_{2} Y^{2} + Q_{1} Y + Q_{0} > 0,
    \qquad
    Y \geq 0.
\end{equation*}
Together with $Q_{3}Y^{3} \geq 0$ and \eqref{partial_p_H}, we obtain
\begin{equation}\label{Hp_positive}
    \partial_{p} H( z, h( z ), Y ) > 0.
\end{equation}
Finally, $\partial_{p}^{2}H$ is increasing in $p$ on the feasible region.
Indeed,
\begin{equation}\label{H2_increase}
    \partial_{p}
    \left ( 2 \partial_{p}^{2}H \right )
    =
    3
    \bigl[
        4 Y^{3} ( 2 - z )
        + 24 Y^{2} ( 2 - z )
        + Y ( 102 - 45 z )
        + 72 - 28 z
    \bigr]>0.
\end{equation}
At $p = h( z )$, we can compute
\begin{equation*}
    2( z + 2 )  \partial_{p}^{2}H( z, h( z ), Y )
    =
    S_{3} Y^{3}
    +
    S_{2} Y^{2}
    +
    S_{1} Y
    +
    S_{0},
\end{equation*}
where
\begin{equation*}
    S_{3}
    =
    4 ( 2 - z ) ( z^{2} - 2 z + 4 ),
    \qquad
    S_{2}
    =
    16 ( 2 - z ) ( z^{2} - 4 z + 6 ),
\end{equation*}
\begin{equation*}
    S_{1}
    =
    -23 z^{3}
    + 174 z^{2}
    - 476 z
    + 472,
    \qquad
    S_{0}
    =
    4 ( 10 - 3 z ) ( z^{2} - 5 z + 10 ).
\end{equation*}
Here $S_{3}$, $S_{2}$, and $S_{0}$ are positive for $0 < z < \sqrt{2} < 2$.
Also $S_{1} > 0$ because $S_{1}'( z ) < 0$ on $( 0, \sqrt{2} )$ and
$S_{1}( \sqrt{2} ) = 820 - 522 \sqrt{2} > 0$.
Therefore 
\begin{equation*}
    \partial_{p}^{2}H( z, h( z ), Y ) > 0.
\end{equation*}
Since $\partial_{p}^{2}H$ is increasing in $p$ according to \eqref{H2_increase}, we have
$\partial_{p}^{2}H( z, p, Y ) > 0$ for $p > h( z )$.
Hence $\partial_{p}H( z, p, Y ) > \partial_{p}H( z, h( z ), Y ) > 0$, where the last inequality follows from \eqref{Hp_positive}.
Then for $p > h( z )$, also utilizing \eqref{H_nonneg}, we have
\begin{equation*}
    H( z, p, Y ) > H( z, h( z ), Y ) \geq 0.
\end{equation*}
This proves $H_{\alpha, \mu}( R ) > 0$ as desired in \eqref{H_pos}.

Now all factors in the denominator of $f( 1 )-f( R )$ in \eqref{f1_minus_fR} are positive, which can be seen from \eqref{alpha_mu_range1}, \eqref{alpha_mu_range2}, and \eqref{EG_pos}.
We also note that $R - 1 \geq 0$ and $H_{\alpha, \mu}( R ) > 0$.
Therefore
\begin{equation*}
    f( R ) \leq f( 1 )
    \qquad
    \text{for all } R \geq 1.
\end{equation*}
Consequently,
\begin{equation*}
    \rho
    =
    \sup_{ R \geq 1 } f( R )
    =
    f( 1 )
    =
    \frac{
        ( 1 - 2 \mu )^{2}
    }{
        ( \alpha - 2 \mu + 2 )( 4 - \alpha - 8 \mu )
    },
\end{equation*}
which proves \eqref{supremum_f} as desired.
Substituting back $\lambda = - \mu$, this yields
\begin{equation*}
    \rho
    =
    \frac{
        ( 2 \lambda + 1 )^{2}
    }{
        ( \alpha + 2 \lambda + 2 )( 8 \lambda + 4 - \alpha )
    }.
\end{equation*}
Then by \eqref{beta_rho_relation}, it follows that
\begin{equation*}
    0 < \beta
    <
    1
    -
    \frac{
        2 ( 2 \lambda + 1 )
    }{
        \sqrt{
            ( \alpha + 2 \lambda + 2 )
            ( 8 \lambda + 4 - \alpha )
        }
    },
\end{equation*}
provided $( \lambda, \alpha )$ belongs to the feasible region \eqref{feasible_lam_alpha}.
Figure~\ref{fig:parameter_regions_merged}, panel (c), illustrates the admissible range of $\beta$ in the $( \lambda, \alpha )$-plane within the feasible region, as characterized by the corresponding value of $\inf ( 1 - \beta )^{2}$.

\begin{figure}[htp]
    \centering
    \includegraphics[width=\linewidth]{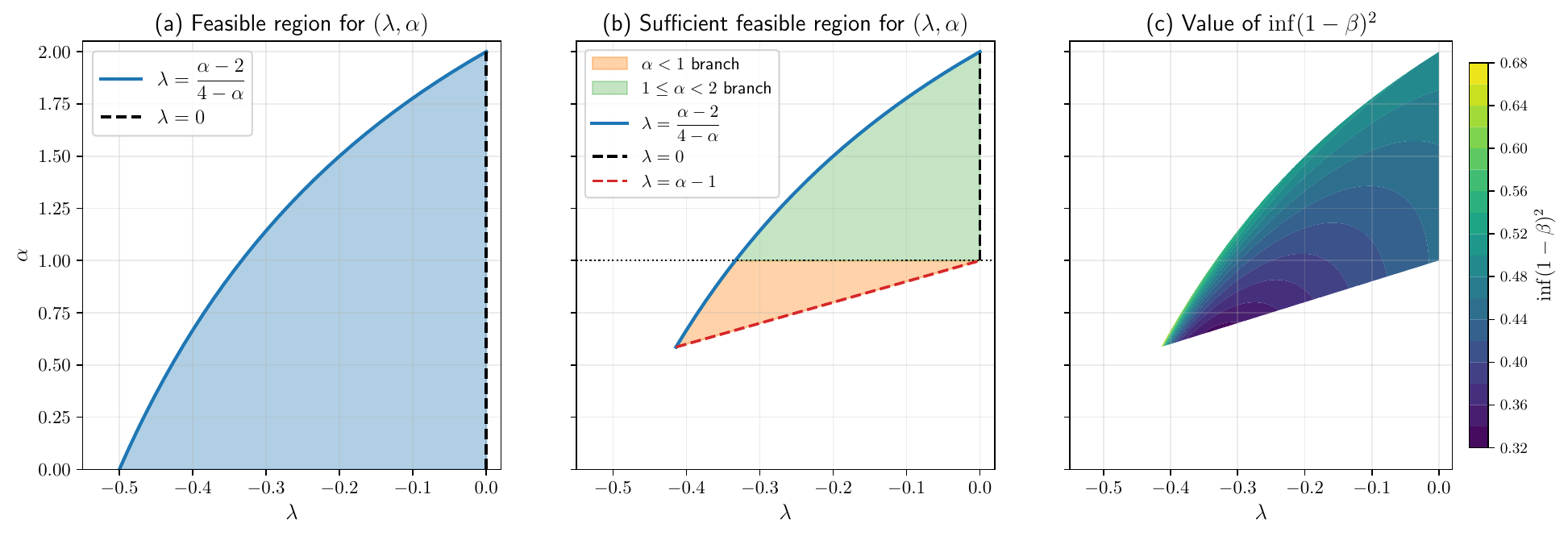}
    \caption{
        Panel (a) shows the first-order sign region from \eqref{ineq:lamalpha_region_LL1}.
        Panel (b) shows the refined feasible region from \eqref{feasible_lam_alpha}.
        Panel (c) shows $\inf ( 1 - \beta )^{2}$ on the refined feasible region.
    }
    \label{fig:parameter_regions_merged}
\end{figure}